\definecolor{dark}{gray}{0.4}
\theoremstyle{definition}
\newtheorem{definition}{Definition}
\newtheorem{Proposition}{Proposition}
\newtheorem{Theorem}{Theorem}
\newtheorem{Corollary}{Corollary}
\newtheorem{assumption}{Assumption}
\newtheorem{Lemma}{Lemma}
\DeclarePairedDelimiter{\norm}{\lVert}{\rVert}
\NewDocumentCommand{\normL}{ s O{} m }{%
  \IfBooleanTF{#1}{\norm*{#3}}{\norm[#2]{#3}}_{L_2(\Omega)}%
}
\title{ Unbiased Simulation for Optimizing Stochastic Function Compositions}
\author{Jose Blanchet, Donald Goldfarb, Garud Iyengar, Fengpei Li, Chaoxu Zhou}
\date{}
\begin{document}
\maketitle
\begin{abstract}
In this paper, we introduce an unbiased gradient simulation algorithms for solving convex optimization problem with stochastic function compositions. We show that the unbiased gradient generated from the algorithm has finite variance and finite expected computation cost. We then combined the unbiased gradient simulation with two variance reduced algorithms (namely SVRG and SCSG) and showed that the proposed optimization algorithms based on unbiased gradient simulations exhibit satisfactory convergence properties. Specifically, in the SVRG case, the algorithm with simulated gradient can be shown to converge linearly to optima in expectation and almost surely under strong convexity. Finally, for the numerical experiment,we applied the algorithms to two important cases  of stochastic function compositions optimization:  maximizing the Cox's partial likelihood model and training conditional random fields.
\end{abstract}

\section{Introduction}

\subsection{Motivation}

In machine learning, we often encounter the following optimization problem. Let  $f_1,...,f_n$ be a sequence of vector functions from $\mathbb{R}^d$ to $\mathbb{R}$. Our goal is to find an approximate solution of the following optimization problem, also known as the empricial risk minimization (ERM) problem,
\begin{equation}
	\min\limits_{x} F(x), \qquad F(x) \triangleq \frac{1}{n} \sum_{i=1}^{n}f_{i}(x)
\end{equation}
The standard method of SGD can be described by the following update rule for $t=1,2,...$
\begin{equation}
	x^{(t)}=x^{(t-1)}-\lambda_t(\nabla  f_{v_t}(x_{t-1})) \qquad \text{and} \qquad \mathbb{E}[x^{(t)}|x^{(t-1)}]=x^{(t-1)}-\frac{\lambda_t}{n}\sum_{i=1}^n \nabla f_{i}(x^{(t-1)})
\end{equation}

where $v_t$ follows uniform distribution on $\{1,2,...,n\}$. Stochastic gradient descent (SGD) and its variance reduced variants including SVRG have been shown to be powerful tools for solving the ERM problem, when $n$ is large and computing the full gradient is computationally intensive. However, most of these algorithms implicitly assume that the gradient of each member function $f_i(\cdot)$, $i = 1, \ldots, n$ is easy to obtain. However, this assumption fails to hold in the stochastic composition optimization problems \cite{wang2017stochastic}
\begin{align}
	\min_{x \in \mathbb{R}^p} F(x) \triangleq \frac{1}{n}\sum_{i=1}^n f_i(\frac{1}{m}\sum_{j=1}^m g_m(x)).
\end{align}
 where $v$ and $w$ follows certain distribution. Problem of this form arises in many areas such as reinforcement learning, risk-averse learning to graphical model, econometrics and survival analysis. The current algorithms used to solve this problem are based on \textit{biased} stochastic gradient oracles. As we know, the convergence rates for these algorithms  are either unsatisfactory compared to  generic stochastic optimization algorithms or heavily dependent on the number of component functions $m$ and $n$. To overcome these drawbacks, we introduce a couple of variance reduced algorithms that involve the simulation of unbiased stochastic gradients via the Multilevel Monte-Carlo.  


\subsection{Contributions}
The contribution of this paper is two-folded. First, we introduce unbiased gradient simulation algorithms for solving stochastic composition optimization problem. With an unbiased gradient simulation procedure, the stochastic composition optimization problem can be reduced to a generic stochastic optimization problem. We also construct a unbiased gradient simulation algorithm to take advantage of the finite sum structure. We also show that the computational cost of the unbiased gradient simulation algorithms is independent of the dimension of the objection function. Secondly, we apply our algorithms to maximize the partial likelihood function in Cox's model whose computational issues has not been fully addressed in machine learning literature so far. Specifically, when the sample size is large, solving this problem is known to be a computationally intensive task because of the cumulative sum structure that involves all data in the risk set presents in every component function. Our unbiased gradient simulation algorithms provide an efficient way to collapse the cumulative sum structure and the variance reduced gradient methods could further boost the rate of convergence. 

\subsection{Related works}

In the stochastic composition optimization literature, all algorithms are based on biased stochastic gradient. \cite{wang2017stochastic} first proposed a generic algorithm for solving \eqref{prob:composition_stoc} with a convergence rate $O(k^{-1/4})$ for convex objectives and $O(K^{-2/3})$ for strongly convex objectives. This result has been improved to $O(k^{-4/5})$ for strongly convex objectives by \cite{wang2016accelerating}. Recently, \cite{pmlr-v54-lian17a} further improves the convergence rate to $O(\rho^{K/(m+n + \kappa^2)})$ for the finite sum problem \eqref{prob:composition_finite} by utilizing a stochastic variance reduced gradient algorithm (SVRG). However, in this paper, we proposed a unbiased gradient simulation method that combines recent development in \cite{rhee2015,blanchet2015unbiased}. In particular, we employ the methods proposed in \cite{blanchet2015unbiased} which combines a bias removal randomization scheme into Multilevel Monte Carlo method proposed in \cite{rhee2015}. We then further make use of the  SVRG \cite{johnson2013accelerating} algorithm which can greatly reduce variance for ERM problem that achieves linear convergence. SVRG has been extended and improved in many works including but not limited to \cite{xiao2014proximal}, \cite{allen2016improved}, \cite{harikandeh2015stopwasting}, \cite{lei2017less}, \cite{gong2014linear}, \cite{nitanda2014stochastic}. SAG \cite{schmidt2013minimizing} and SAGA \cite{defazio2014saga} are two examples of incremental gradient methods that achieve linear convergence.  

\subsection{Organization}
In section 2, we will give some concrete examples that is formulated as \eqref{prob:composition_stoc} and \eqref{prob:composition_finite}. In section 3, we will describe our unbiased gradient simulation algorithms for the stochastic problem \eqref{prob:composition_stoc} and the finite sum problem \eqref{prob:composition_finite}. Then based on these two algorithms, we present the algorithms for both problems. In section 4, we will first show that the gradients generated by the simulation algorithms are unbiased, has finite second moments and the expected computation cost is finite. Finally we will show that our variance reduced  algorithms converges linearly to an $\epsilon$-approximated solution in expectation for both problems. In section 5, we implement our algorithms for maximizing the Cox's partial likelihood and present our numerical results. We concludes with remarks on possible future work.

\section{Problem Description and Algorithms}
\subsection{Problem description}
Through out this paper, we consider numerical solutions of the stochastic optimization problem below
\begin{align}
	\min_{x \in \mathbb{R}^p} F(x) \triangleq \mathbb{E}_vf_v(\mathbb{E}_w g_w(x)). \label{prob:composition_stoc}
\end{align}
Note that the following two problems can be considered as special cases of \eqref{prob:composition_stoc}, the finite sum problem
\begin{align}
	\min_{x \in \mathbb{R}^p} F_n(x) \triangleq \frac{1}{n} \sum_{i = 1}^n f_i(\frac{1}{m_i}\sum_{j = 1}^{m_i} g_j(x)), \label{prob:composition_finite}
\end{align}
or the mixed problem 
\begin{align}
\min_{x \in \mathbb{R}^p} \frac{1}{n} \sum_{i = 1}^n f_i(\mathbb{E}_w g(x, w)). \label{prob:composition_mix}
\end{align}
We will also discuss numerical algorithms for these two special cases. We assume $f_v:\mathbb{R}^d \rightarrow \mathbb{R}$ is $\mu$-strongly convex and has $L$-Lipschitz gradients, for each component $v$ and $g_w:\mathbb{R}^p \rightarrow \mathbb{R}^d$ for each component $w$. The gradient (with respect to $x$) of each member function $f_v(\cdot)$ for the stochastic problem is $\{\mathbb{E}_w
\nabla g_w(x)\}\nabla f_v\{\mathbb{E}_w
 g_w(x)\}$, so that 
 \begin{equation}
 \nabla F_n(x) = \{\mathbb{E}_w \nabla g_w(x)\}^\intercal\mathbb{E}_v\{\nabla f_v(\mathbb{E}_w g_w(x))\}
 \end{equation}
 where 
\begin{align*}
\nabla g_w(x) = \left(\begin{array}{ccc}
\frac{\partial [g_w]_1}{\partial [x]_1}(x) & \cdots & \frac{\partial [g_w]_1}{\partial [x]_p}(x) \\
\vdots & \ddots & \vdots \\
\frac{\partial [g_w]_d}{\partial [x]_1}(x) & \cdots & \frac{\partial [g_w]_d}{\partial [x]_p}(x)
\end{array}\right),
\end{align*}
and 
\begin{align*}
	g_w(x) = ([g_w]_1(x), [g_w]_2(x), \cdots [g_w]_d(x))^\top.
\end{align*}
\subsection{Unbiased stochastic gradient simulation}
We present the algorithm to simulate unbiased gradients for the stochastic problem \eqref{prob:composition_stoc}, \eqref{prob:composition_finite} and \eqref{prob:composition_mix}. They can be considered as variants of \cite{blanchet2015unbiased} which is based on multi-level randomization technique. In the first algorithm we purpose for simulating unbiased gradient for problem \eqref{prob:composition_stoc} and \eqref{prob:composition_mix} while fixing a component $v_1$ for $ f_{v_1}(\mathbb{E}_w g(x, w))$.  The base level $n_0$ of estimator can be raised to reduce variance. We introduce a couple of notations first. 
\begin{definition}\label{sandt}
Fix $x \in \mathbb{R}^p$, we define $S(x)= \nabla g_{w}(x) \in \mathbb{R}^{d \times p}$, $T(x)=g_{w}(x) \in \mathbb{R}^d$ and $Z(x)=\nabla^2 g_{w}(x) \in \mathbb{R}^{d\times p \times p}$ where $w$ is random. Specifically, sample I.I.D $\{w_i\}_{i\geq 1}$ from the distribution of $w$, we define $S_i(x)= \nabla g_{w_i}(x)$, $T_i(x)=g_{w_i}(x)$ and $Z_i(x)=\nabla^2 g_{w_i}(x)$. Also, we write
 $\bar{S}_n(x)=\frac{1}{n}\sum\limits_{i=1}^n S_i(x)$, $ \tilde{S}_n(x)=\frac{1}{n}\sum\limits_{i=n+1}^{2n} S_i(x)$ and similarly for $\bar{T}_n(x),\tilde{T}_n(x),\bar{Z}_n(x),\tilde{Z}_n(x)$. It follows that, for any $n$,
 \begin{equation}\label{SandT}
\bar{S}_{2n}(x)=\frac{1}{2}(\bar{S}_n(x)+\tilde{S}_n(x)) \quad \text{,} \quad \bar{T}_{2n}(x)=\frac{1}{2}(\bar{T}_n(x)+\tilde{T}_n(x)) \quad \text{and} \quad \bar{Z}_{2n}(x)=\frac{1}{2}(\bar{Z}_n(x)+\tilde{Z}_n(x))
\end{equation}

\end{definition}
\begin{algorithm}[htb]
	\caption{UnbiasedGradient($x,v_1$)}
	\label{alg:mlmc2}
	\begin{algorithmic}
		\STATE\textbf{Input:} $x \in \mathbb{R}^p$,$v_1 \in \{1,...,n\}$, base level of estimator $n_0 \geq 0$, rate parameter $1 < \gamma < 2$.  \\
		\STATE \textbf{Output:} $W(x,v_1) \in \mathbb{R}^p$, an unbiased estimate of the gradient of $ f_{v_1}(\mathbb{E}_w g(x, w))$ at point $x$ and component $v_1$.
		\STATE Sample $N$ follow geometric distribution with success probability $1-p$ where $p=0.5^{\gamma}$. 
		\STATE Sample I.I.D. $\{w_i\}_{1 \leq i \leq 2^{N+n_0+1}}$ follow the distribution of $w$ and obtain $\{S_i(x),T_i(x)\}_{1\leq i\leq 2^{N+n_0+1}}$.
		\STATE Set $Y_1 = [\bar{S}_{2^{N+n_0+1}}(x)]^\intercal \cdot \nabla f_{v_1}(\bar{T}_{2^{N+n_0+1}}(x))$.  Set $Y_2 = [\bar{S}_{2^{N+n_0}}(x)]^\intercal \cdot \nabla f_{v_1}(\bar{T}_{2^{N+n_0}}(x))$.
		\STATE Set $Y_3 = [\tilde{S}_{2^{N+n_0}}(x)]^\intercal \cdot \nabla f_{v_1}(\tilde{T}_{2^{N+n_0}}(x))$. $\quad$ Set $Y_4 = [\bar{S}_{2^{n_0}}(x)]^\intercal \cdot \nabla f_{v_1}(\bar{T}_{2^{n_0}}(x))$.
		\STATE Set $W(x,v_1)=\frac{Y_1-0.5 \cdot (Y_2+Y_3)}{\tilde{p}_N} + Y_4$, where $\tilde{p}_N=(1-p)\cdot p^N$.
		\STATE \textbf{Output:} $W(x,v_1)$
	\end{algorithmic}
\end{algorithm}

We shall prove in section 4 that algorithm 1 outputs an unbiased estimate of $ f_{v_1}(\mathbb{E}_w g(x, w))$ for fixed $v_1$. It follows that if we sample $v_1\sim v$, then $W(x,v_1)$ would be an unibased estimate of the gradient of $\mathbb{E}_vf(\mathbb{E}_w g(x, w), v)$.   The algorithm 1 presented here is in its most general form which can be applied to unbiased gradient simulation for all three problems \eqref{prob:composition_stoc}, \eqref{prob:composition_mix} and \eqref{prob:composition_finite}. 
We also present another algorithm below tailored for the finite sum problem \eqref{prob:composition_finite} where $\mathbb{E}_vf(\mathbb{E}_w g(x, w), v)$ can be written as $\frac{1}{n} \sum_{i=1}^n f_i(\frac{1}{m_i}\sum_{j=1}^{m_i}g_j(x))$. The key change in algorithm 2 is to truncate the geometric random variable to take into account the case when the first algorithm requires more samples than the size of overall data. We discuss the details of these algorithms in section 4.

\begin{algorithm}[htb]
	\caption{Unbiased Estimator of Gradient for finite sum problems using Multilevel Monte-Carlo}
	\label{alg:mlmc}
	\begin{algorithmic}
		\STATE\textbf{Input:} $x \in \mathbb{R}^p$,$v_1 \in \{1, \ldots, n\}$, base level of estimator $n_0 \geq 0$, rate parameter $1 < \gamma < 2$.  \\
		\STATE \textbf{Output:} $W(x,v_1) \in \mathbb{R}^p$, an unbiased estimator of the gradient of $\frac{1}{n} \sum_{i=1}^n f_i(\frac{1}{m_i}\sum_{j=1}^{m_i}g_j(x))$ in \eqref{prob:composition_finite} at point $x$. 
		\STATE Sample $N$ follow geometric distribution with success probability $1 - p$, where $ p = 0.5^\gamma$. 
		\STATE Set $n_1 = \lfloor\log_2(m_{v_1})\rfloor$, $N_2$ = $N\mod (n_1-n_0+1)$ and $\tilde{p}_{N_2}=p^{N_2}(1-p)(1-p^{N_1-n_0+1})^{-1}$
		\IF {$n_0 \geq n_1$}
		\STATE Set $ W(x,v_1) = \{ \frac{1}{m_{v_1}} \sum_{j = 1}^{m_{v_1}} [\nabla g_j(x)]^\intercal\} \nabla f_{v_1}\{\frac{1}{m_{v_1}}\sum_{j=1}^{m_{v_1}}g_j(x)\}$
		\ELSIF{$n_2=n_1-n_0$}
		\STATE  Uniformly sample with replacement $\{w_i\}_{1\leq i \leq 2^{n_1} }$ from $\{1, \ldots, m_{v_1}\}$.
		\STATE Set $Y_1 = \{\frac{1}{m_{v_1}} \sum_{j = 1}^{m_{v_1}} [\nabla g_j(x)]^\intercal\} \nabla f_{v_1}\{\frac{1}{m_{v_1}}\sum_{j=1}^{m_{v_1}}g_j(x)\}$. 
		\STATE Set $Y_2 = \{\frac{1}{2^{n_1}}\sum_{i = 1}^{2^{n_1}}[\nabla g_{w_i}(x)]^\intercal\}\nabla f_{v_1} \{\frac{1}{2^{n_1}}\sum_{i = 1}^{2^{n_1}} g_{w_i}(x) \}$.
		\STATE Set $Y_3 = \{\frac{1}{2^{n_0}}\sum_{i = 1}^{2^{n_0}}[\nabla g_{w_i}(x)]^\intercal\}\nabla f_{v_1} \{\frac{1}{2^{n_0}}\sum_{i = 1}^{2^{n_0}} g_{w_i}(x) \}$
		\STATE Set $W(x,v_1)=\frac{Y_1-Y_2}{\tilde{p}_N} + Y_3$;
		\ELSE 
		\STATE Uniformly sample with replacement $\{w_i\}_{1\leq i \leq 2^{n_1+n_0+1} }$ from $\{1, \ldots, m_{v_1}\}$.
		\STATE Set $Y_1 = [\bar{S}_{2^{N_2+n_0+1}}(x)]^\intercal \cdot \nabla f_{v_1}(\bar{T}_{2^{N_2+n_0+1}}(x))$. Set $Y_2 = [\bar{S}_{2^{N+n_0}}(x)]^\intercal \cdot \nabla f_{v_1}(\bar{T}_{2^{N+n_0}}(x))$.
		\STATE Set $Y_3 = [\tilde{S}_{2^{N+n_0}}(x)]^\intercal \cdot \nabla f_{v_1}(\tilde{T}_{2^{N+n_0}}(x))$. $\qquad$ Set $Y_4=[\bar{S}_{2^{n_0}}(x)]^\intercal \cdot \nabla f_{v_1}(\bar{T}_{2^{n_0}}(x))$.
		\STATE Set $W(x,v_1)=\frac{Y_1-0.5 \cdot (Y_2+Y_3)}{\tilde{p}_N} + Y_4$
		\ENDIF
		\STATE \textbf{Output:} $W(x,v_1)$
	\end{algorithmic}
\end{algorithm}
\textbf{Remark}: In this algorithm, we truncated the geometric random variable $N$ at $n_1-n_0+1$ and adjust its probability mass function at $n_2$ from $p^{n_2}(1-p)$ to $\tilde{p}_{n_2}=p^{n_2}(1-p)(1-p^{n_1-n_0+1})^{-1}$ to account for the truncation.
\subsection{Optimization Algorithms}

  We now present our algorithms to solve problem \eqref{prob:composition_stoc}, \eqref{prob:composition_mix} and \eqref{prob:composition_finite}.
It is based on the unbiased gradient simulation algorithms just introduced as well as the control variate method for variance reduction. In \cite{JZ}, \cite{frostig2015competing}, the control variate methods ia used to generate variance reduced stochastic gradients for solving $\min_{x \in \mathbb{R}^p} \mathbb{E}_\xi f(x,v)$. For example, for a function of the form $F(x)=\frac{1}{n}\sum_{i=1}^n f_i(x)$, a variance reduced stochastic gradient at point $x$ with respect to the reference point $\tilde{x}$ is defined as $\nabla_x f(x, v_1) - \nabla_x f(\tilde{x}, v_1) + \mathbb{E}_v \nabla_x f(\tilde{x}, v)$ where $v_1$ is sampled from $v$. In contrast to SGD where the stochastic gradient is simply $\nabla_x f(x, v_1)$, the variance reduced algorithms use constant step size and converge linearly to the optimum in the presence of strong convexity. 

We adopt the variance reduction techniques into the current setting of optimizing function compositions with simulated unbiased gradients. Specifically, we simulate the unbiased gradients at $x$ and $\tilde{x}$ simultaneously, using the same set of data, to control variance. We summarize the details of generating variance reduced gradient in algorithm 3. The procedure in algorithm 3 is based on the setting of algorithm 1 for the ease of presentation and it can be modified to suit the improved algorithm 2 as well. 

\begin{algorithm}[htb]
	\caption{SimulatedGradient(x, $\tilde{x}$, $g(\tilde{x})$)}
	\label{alg:mlmc3}
	\begin{algorithmic}
		\STATE\textbf{Input:} $x \in \mathbb{R}^d $, $v_1 \in \Omega_v$, reference point $\tilde{x} \in \mathbb{R}^d$, reference gradient at point $\tilde{x}$ denoted by $g(\tilde{x}) \in \mathbb{R}^p$, base level of estimator $n_0 \geq 0$ and rate parameter $1 < \gamma < 2$. \\
		\STATE \textbf{Output:} $W \in \mathbb{R}^p$, a variance reduced unbiased estimator of the gradient of $ \mathbb{E}_vf(\mathbb{E}_w g(x, w), v)$ at point $x$.
		\STATE Sample $N$ from geometric distribution with success rate $1-p$ where $p=0.5^{\gamma}$ and let $\tilde{p}_N=(1-p)\cdot p^N$.
		\STATE Sample I.I.D $\{w_i\}_{1 \leq i \leq 2^{N+n_0+1}}$ follow the distribution of $w$ and obtain $\{S_i(x),T_i(x)\}_{1\leq i\leq 2^{N+n_0+1}}$.
		\STATE  Set $Y_1(x) = [\bar{S}_{2^{N+n_0+1}}(x)]^\intercal \cdot \nabla f_{v_1}(\bar{T}_{2^{N+n_0+1}}(x))$. $\quad$ Set $Y_1(\tilde{x}) = [\bar{S}_{2^{N+n_0+1}}(\tilde{x})]^\intercal \cdot \nabla f_{v_1}(\bar{T}_{2^{N+n_0+1}}(\tilde{x}))$.
		\STATE Set $Y_2(x) = [\bar{S}_{2^{N+n_0}}(x)]^\intercal \cdot \nabla f_{v_1}(\bar{T}_{2^{N+n_0}}(x))$. $\quad\qquad$ Set $Y_2(\tilde{x}) = [\bar{S}_{2^{N+n_0}}(\tilde{x})]^\intercal \cdot \nabla f_{v_1}(\bar{T}_{2^{N+n_0}}(\tilde{x}))$.
		\STATE Set $Y_3(x) = [\tilde{S}_{2^{N+n_0}}(x)]^\intercal \cdot \nabla f_{v_1}(\tilde{T}_{2^{N+n_0}}(x))$. $\quad\qquad$ Set $Y_3(\tilde{x}) = [\tilde{S}_{2^{N+n_0}}(\tilde{x}) \cdot]^\intercal \nabla f_{v_1}(\tilde{T}_{2^{N+n_0}}(\tilde{x}))$.
		\STATE Set $Y_4(x) = [\bar{S}_{2^{n_0}}(x)]^\intercal \cdot \nabla f_{v_1}(\bar{T}_{2^{n_0}}(x))$. $\qquad\quad\qquad$ Set $Y_4(\tilde{x}) = [\bar{S}_{2^{n_0}}(\tilde{x})]^\intercal \cdot \nabla f_{v_1}(\bar{T}_{2^{n_0}}(\tilde{x}))$.
		\STATE Set $W(x,v_1)=\frac{Y_1(x) -0.5 \cdot \{Y_2(x)+Y_3(x) \}}{\tilde{p}_N} + Y_4(x)$. $\quad$ Set $W(\tilde{x},v_1)=\frac{ Y_1(\tilde{x})-0.5 \cdot \{ Y_2(\tilde{x})+ Y_3(\tilde{x})\}}{\tilde{p}_N} + Y_4(\tilde{x}) $.
		\STATE Set $W=W(x,v_1)-W(\tilde{x},v_1) + g(\tilde{x})$.
		\STATE \textbf{Output:} W
	\end{algorithmic}
\end{algorithm}

In the above algorithm, the reference gradient $g(\tilde{x})$ can either be the full gradient at $\nabla F(\tilde{x})$ or some estimate of the full gradient $\nabla F(\tilde{x})$. Specifically, when it is efficient  to compute full gradients of the objective function for problem \eqref{prob:composition_stoc}, we propose to use the Variance Reduced Simulated Gradient Descent method for solving this problem.
\begin{algorithm}[H]
	\caption{Simulated Variance Reduced Gradient Descent(Simulated SVRG)}
	\label{svrggogo}
	\begin{algorithmic}
		\STATE\textbf{Inputs:} Number of epochs $T$, number of steps in each epoch $M$, step size $\lambda$ and initial point $\tilde{x}_0$.
		\FOR {$s=1,2,...T$}
		\STATE $\tilde{h}=\nabla F(\tilde{x}_{s-1})$
		\STATE $x_0=\tilde{x}$
		\FOR {$t=1,2,...M$}
		\STATE  Sample $v_t$ from the distribution of $v$. 
		\STATE Set $\nu_t =  \text{SimulatedGradient}(x_{t-1}, \tilde{x}_{s-1}, \tilde{h},v_t)$.
		\STATE Update ${x}_{t}={x}_{t-1}-\lambda \nu_t$.
		\ENDFOR
		\STATE \textbf{option I} Output $\tilde{x}_s=x_{M}$
		\STATE \textbf{option II} Output $\tilde{x}_s=x_{t}$ for randomly chosen $t \in \{0,...,M-1\}$
		\ENDFOR
		\end{algorithmic}
\end{algorithm}
However, when the full gradients $\nabla F(\tilde{x})$ of the objective function \eqref{prob:composition_stoc} can not be computed efficiently, we estimate the full gradient $\nabla F(\tilde{x})$ by sampling unbiased gradient within a batch of the index and taking avergae. We summarize the detail into the following Stochastically Controlled Simulated Gradient method. 
\begin{algorithm}[H]
	\caption{Stochastically Controlled Simulated Gradient Descent(Simulated SCSG)}
	\label{svrgalgo}
	\begin{algorithmic}
		\STATE\textbf{Inputs:} Number of epochs $T$, number of steps in each epoch $M$, batch size $B$, sample size $K$, step size $\lambda$, initial point $\tilde{x}_0$.
		\FOR {$s=1,2,...T$}
		\STATE $\tilde{x}=\tilde{x}_{s-1}$
		\STATE Uniformly sample a batch $\mathcal{I}_s \subset \Omega_v$ according to the distribution of $v$ with $|{\mathcal{I}_s}|=B$
		\FOR {$k=1,2,...,K$}
		\STATE Generate $h_k(\tilde{x})=\frac{1}{B}\sum_{v_i\in \mathcal{I}_s}$UnbiasedGradient$(\tilde{x},v_i)$
		\ENDFOR
		\STATE Set $\tilde{h}(\tilde{x})=\frac{1}{K}\sum_{i=1}^{K} h_i(\tilde{x})$
		\FOR {$t=1,2,...M$}
		\STATE  Sample $v_t$ from the distribution of $v$.
		\STATE Set $\nu_t = \text{SimulatedGradient}(x_{t-1}, \tilde{x}_{s-1}, \tilde{h}(\tilde{x}),v_t)$.
		\STATE Update ${x}_{t}={x}_{t-1}-\lambda v_t$.
		\ENDFOR
		\STATE \textbf{option I} Output $\tilde{x}_s=x_{M}$
		\STATE \textbf{option II} Output $\tilde{x}_s=x_{t}$ for randomly chosen $t \in \{0,...,M-1\}$
		\ENDFOR
		
	\end{algorithmic}
\end{algorithm}
We will prove the convergence properties of Algorithm 4 and 5 in section 4.
\section{Examples}
We present some important examples of stochastic optimization problem.
\subsection{Conditional Random Fields (CRF)}
Conditional random fields \cite{lafferty2001conditional} is a popular probabilistic model used for structural prediction. It has been used in a number of  natural language processing problems including  part-of-speech tagging \cite{lafferty2001conditional}, noun-phrase chunking \cite{sutton2007dynamic, sha2003shallow} named identity recognition \cite{mccallum2003early} and image segmentation task in computer vision \cite{nowozin2011structured}. For example, Given an observation $x \in \mathcal{X}$, the conditional probability of a structured outcome $y \in \mathcal{Y}$ is given by
\begin{align}
	p(y \, \vert \, x ; \theta) = \frac{\exp\{\theta^\top F(x, y)\}}{\sum_{y' \in \mathcal{Y}} \exp\{\theta^\top F(x, y')\}},
\end{align}
where $\theta \in \mathbb{R}^p$ is the parameter to be estimated and $F(x, y) \in \mathbb{R}^p$ is pre-specified feature functions depending on the underlying structure of $\mathcal{Y}$. Base on a set of training data $\{(x_i, y_i), i = 1, \ldots, n\}$, the parameter $\theta$ can be estimated by maximize the log likelihood function
\begin{align}
	\max_{\theta \in \mathbb{R}^p} \frac{1}{n} \sum_{i = 1}^n \log p(y_i \, \vert\, x_i, \theta). \label{prob:crf}
\end{align}
The key difficulty of computing the objective function value or its gradient lies in the exponential cardinality of $\mathcal{Y}$. When the underlying structure of $\mathcal{Y}$ is a linear chain or a tree, both objective function values and its gradient can be efficiently computed by dynamic programming method (the Viterbi algorithm\cite{1450960}). In this case, a number of  methods could be used to solve \eqref{prob:crf}, for example, deterministic methods such as the iterative scaling algorithm in \cite{lafferty2001conditional} , L-BFGS \cite{sha2003shallow}, stochastic methods such as SGD in \cite{vishwanathan2006accelerated} and SAG in \cite{schmidt2015non}. However, the computational issue of CRF has not been fully addressed when the underlying structure is more complex. In our setting, we can formulate   \eqref{prob:crf} as a composition optimization problem as in \eqref{prob:composition_stoc} by noticing that \eqref{prob:crf} is equivalent to
\begin{align*}
	\min_{\theta} \frac{1}{n}\sum_{i=1}^n \big( \log\big[  \sum_{y' \in \mathcal{Y}} \exp\{\theta^\top F(x_i, y')\}\big] - \theta^\top F(x_i, y_i) \big)\label{prob:crf1}
\end{align*}
whose gradient can be written as 
\begin{align*}
	\frac{1}{n}\sum_{i=1}^n \frac{\sum_{y' \in \mathcal{Y}}\exp\{\theta^\top F(x_i, y')\}F(x_i, y')}{\sum_{y' \in \mathcal{Y}} \exp\{\theta^\top F(x_i, y')\}} - F(x_i, y_i).
\end{align*}
Note that this problem is equivalent to 
\begin{align*}
		\min_{\theta} \frac{1}{n}\sum_{i=1}^n \big( \log\big[ \frac{1}{|\mathcal{Y}|} \sum_{y' \in \mathcal{Y}} \exp\{\theta^\top F(x_i, y')\}\big] - \theta^\top F(x_i, y_i) + \log |\mathcal{Y}| \big).
\end{align*}
Therefore we can view it as a function composition and apply our optimization algorithms to solve this problem.

\subsection{Cox's partial likelihood}
Cox's partial likelihood\cite{david1972regression, cox1975partial} is a widely used model in survival analysis for censored data. The model assumes 
\begin{align*}
	\lambda(t \vert X) = \lambda_0(t)\exp(\beta' X),
\end{align*}
where $\lambda(t \vert X)$ is the hazard function for an individual with covariates $X \in \mathbb{R}^p$ and coefficient $\beta \in \mathbb{R}^p$; and $\lambda_0(t)$ is the baseline hazard function. In the model,  let $(X_i, Y_i, \Delta_i)_{1\leq i \leq n}$ be i.i.d. observations where $X_i \in \mathbb{R}^p$ is the covariates and let $Y_i = \min(T_i, C_i)$ , $\Delta_i = \mathbb{I}\{Y_i = T_i\}$ where $T_i$ is the true life time and $C_i$ is the censoring time independent of $T_i$. Also, for a particular observation $i$, its risk set is defined to be the index set $\{j: {Y_j \geq Y_i}\}$. The goal is to maximize the partial likelihood function which can be written as the following composition optimization problem as in \eqref{prob:composition_stoc}:
\begin{align}
	\min_{\beta \in \mathbb{R}^p} \frac{1}{n}\sum_{i=1}^n\Delta_i[-X_i^\top \beta + \log\{\sum_{j = 1}^n \mathbb{I}(Y_j \geq Y_i)\exp(X_j^\top \beta)\}],
\end{align}
  and the gradient of this objective function is
\begin{align}
	\frac{1}{n}\sum_{i=1}^n \Delta_i[-X_i + \frac{\sum_{j=1}^n \mathbb{I}(Y_j\geq Y_i )\exp(X_j^\top \beta) X_j}{\sum_{j=1}^n \mathbb{I}(Y_j \geq Y_i) \exp(X_i^\top \beta)}].
\end{align}
Note that this problem is equivalent to
\begin{align*}
	\min_{\beta \in \mathbb{R}^p} \frac{1}{n}\sum_{i=1}^n\Delta_i[-X_i^\top \beta + \log\{\frac{1}{n}\sum_{j = 1}^n \mathbb{I}(Y_j \geq Y_i)\exp(X_j^\top \beta)\}].
\end{align*}
Now we can view this problem as a composition of functions and apply the proposed algorithm to solve it.

\subsection{Solving expectation-Maximization (EM) subproblem without posterior sampling}
An Expectation-Maximization (EM) algorithm \cite{dempster1977maximum} is an iterative procedure to obtain an MLE of a statistical model with the presence of latent variables (or random effects). Given the observed data ${x}$, latent data $z$, the parameters to be estimated $\theta \in \mathbb{R}^p$ and the likelihood function $L(\theta; x, z) = p(x, z \ \vert\ \theta)$, the EM algorithm iteratively performs the following two steps
\begin{itemize}
	\item{E-step} Update $Q(\theta \vert \theta^{(t)}) = \int \log L(\theta ; x, z) p(z \vert x, \theta^{(t)}) dz$
	\item{M-step} $\text{maximize}_{\theta \in \mathbb{R}^p} Q(\theta \vert \theta^{(t)})$.
\end{itemize}
When the latent variable $z$ is high dimensional, due to the difficulty of numerical integration in E-step, the two steps are combined into a stochastic optimization problem:
\begin{align}
	\min_{\theta \in \mathbb{R}^p} - \int \log L(\theta ; x, z) p(z \vert x, \theta^{(t)}) dz.
\end{align}
This problem can be solved by sampling from the posterior distribution and applying stochastic gradient descent algorithm. However, the Markov chain Monte Carlo (MCMC) algorithms used for  posterior sampling can be slow and inaccurate in high dimensional cases.Therefore, we rewrite the objective function as
\begin{align}
	-\int \log L(\theta; x, z) p(z \vert x, \theta^{(t)})dz &= -\int \log L(\theta; x, z) \frac{p(x \vert z, \theta^{(t)})}{p(x \vert \theta^{(t)})}p(z)dz \nonumber \\
	&= -\int \log L(\theta; x, z) \frac{p(x \vert z, \theta^{(t)})}{\int p(x \vert z, \theta^{(t)}) p(z)dz}p(z)dz,
\end{align}
and treat it as minimizing function compositions using the proposed algorithms.
\section{Theory}
In this section we present the analysis of our algorithms applied for problem \eqref{prob:composition_stoc}, the case where one is sloving $\min\limits_{x \in \mathbb{R}^p} F(x) \triangleq \mathbb{E}_v f\{\mathbb{E}_w g(x, w), v\} $. The case for \eqref{prob:composition_finite} and \eqref{prob:composition_mix} can be analyzed similarly. 
\subsubsection{Definitions and Assumptions}
\begin{assumption}
Given the initial point $\tilde{x}_0 \in \mathbb{R}^p$, there exist a compact set  $\mathcal{D}$ such that then the sequence of iterates $\{x_k\}_{k\geq 0}$ produced by the algorithms is contained in $\mathcal{D}$.
\end{assumption}
\begin{assumption}
	Inside the compact set $\mathcal{D}$, each $f_v(\cdot)$ in the objective function of \eqref{prob:composition_stoc} is $\mu$-strongly convex with $L$-Lipschitz continuous gradients.
\end{assumption}
\begin{assumption}\label{lp}
Inside the compact set $\mathcal{D}$, each $f_v(\cdot) $ is twice continously differentiable and its second derivatives have $L$-Lipschitz continous gradient and each $g_w(\cdot)$ is twice continously differentiable. 
\end{assumption}
\textbf{Remark}: Assumption 1 is reasonable for deterministic SVRG and SCSG algorithms. In the Simulated SVRG and SCGS algorithms where we use simulated gradients, we can still justify it under small adjustments. For example, if we switch the Simulated SVRG and SCGS to the deterministic ones whenever the output $\tilde{x}_s$ of the algorithm lies outside some compact set $\mathcal{D}_0$, then the convergence result for the algorithms will not be affected while we may find a appropriate $\mathcal{D}_0 \subset \mathcal{D}$ where assumption 1 holds. In practice, by making $\mathcal{D}_0$ large enough, the adjustment will not be necessary.

	  \begin{definition}\label{ld}
	  	We define the support of distribution $v$ and $w$ to be $\Omega_v$ and $\Omega_w$. Let $\mathcal{G} = \{y \in \mathbb{R}^d \, | \, y = g_w(x), x \in \mathcal{D},w\in \Omega_w\}$ $\mathcal{H} = \{y \in \mathbb{R}^{d \times p} \, | \, y = \nabla g_w(x), x \in \mathcal{D},w\in \Omega_w\}$ and $\mathcal{J} = \{z \in \mathbb{R}^{d \times p\times p} \, | \, z = \nabla^2 g_w(x), x \in \mathcal{D},w\in \Omega_w\}$. We define $l_{f}=\sup\limits_{y\in \mathcal{G}}\sup\limits_{v \in \Omega_v}\sup\limits_{0\leq i \leq 2} |f_v^{(i)}(y)|$ and $l_{g}=\sup\limits_{x\in \mathcal{D}}\sup\limits_{w \in \Omega_w}\sup\limits_{0\leq j \leq 2} |g_w^{(j)}(x)|$ where we write the upper index $f^{(i)}$ and $g^{(j)}$ to denote the order of the derivative when they are actually partial derivatives and the norm $|\cdot|$ is taken to be the maxinum among partial derivatives.
	  	
	   Finally, we set $l_{\mathcal{D}}=max\{l_f,l_g,L,1\}$ so that the norm of any component of the partial derivative functions $f_v^{(i)}(y),g_{w}^{(j)}(x)$ is  bounded by $l_{\mathcal{D}}$ with Lipschitz continous gradient $l_{\mathcal{D}}$ for any $x\in \mathcal{D}, y\in \mathcal{G},v \in \Omega_v, w\in \Omega_w$ and $0 \leq i \leq 2, 0\leq j \leq 1$. As a consequence of Assumption 1 and Assumption 3, we have $l_{\mathcal{D}} < \infty$.
	\end{definition}
Before we proceed to the proofs, we introduce some techinal lemmas.
	\begin{Lemma}\label{secondorder}
		Let $f:\mathbb{R}^d \rightarrow \mathbb{R}$ be a continuously differentiable function with $L$-Lipschitz continuous gradients, then
		\begin{align*}
		|f(y) - f(x) - \langle \nabla f(x), y-x \rangle | \leq \frac{L}{2}\|y -x \|_2^2.
		\end{align*}
	\end{Lemma}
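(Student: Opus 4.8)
The plan is to reduce everything to a one-dimensional integral along the segment joining $x$ and $y$ and then apply the Lipschitz bound pointwise under the integral sign. First I would introduce the scalar function $\phi(t) = f(x + t(y-x))$ for $t \in [0,1]$, which is continuously differentiable since $f \in C^1$, with $\phi'(t) = \langle \nabla f(x+t(y-x)),\, y-x\rangle$. By the fundamental theorem of calculus, $f(y) - f(x) = \phi(1) - \phi(0) = \int_0^1 \phi'(t)\,dt = \int_0^1 \langle \nabla f(x+t(y-x)),\, y-x\rangle\,dt$.

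Next I would absorb the linear term into the same integral. Writing $\langle \nabla f(x), y-x\rangle = \int_0^1 \langle \nabla f(x), y-x\rangle\,dt$, the quantity to be estimated becomes a single integral of a difference, $f(y) - f(x) - \langle \nabla f(x), y-x\rangle = \int_0^1 \langle \nabla f(x+t(y-x)) - \nabla f(x),\, y-x\rangle\,dt$. Passing absolute values inside via the triangle inequality for integrals bounds the left-hand side by $\int_0^1 |\langle \nabla f(x+t(y-x)) - \nabla f(x),\, y-x\rangle|\,dt$.

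The key estimate is then a pointwise bound on the integrand. Cauchy--Schwarz gives $|\langle \nabla f(x+t(y-x)) - \nabla f(x),\, y-x\rangle| \leq \|\nabla f(x+t(y-x)) - \nabla f(x)\|_2\,\|y-x\|_2$, and the $L$-Lipschitz continuity of $\nabla f$ controls the first factor by $L\,\|t(y-x)\|_2 = Lt\,\|y-x\|_2$. Thus the integrand is at most $Lt\,\|y-x\|_2^2$, and integrating over $t \in [0,1]$ produces the factor $\int_0^1 t\,dt = \tfrac12$, yielding exactly $\frac{L}{2}\|y-x\|_2^2$.

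I do not anticipate any genuine obstacle, as this is a classical smoothness inequality. The only points requiring minor care are the applicability of the fundamental theorem of calculus to $\phi$ (valid because $\phi'$ is continuous on the compact interval $[0,1]$) and the interchange of the absolute value with the integral, both of which are routine. No step should be substantively difficult.
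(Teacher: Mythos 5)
Your proof is correct, and it takes a genuinely different route from the paper's. You use the classical integral argument: parametrize the segment by $\phi(t) = f(x+t(y-x))$, write the error term as $\int_0^1 \langle \nabla f(x+t(y-x)) - \nabla f(x),\, y-x\rangle\,dt$, and bound the integrand pointwise by Cauchy--Schwarz plus Lipschitz continuity, with $\int_0^1 t\,dt = \tfrac12$ producing the constant. A nice feature of your argument is that the absolute value comes out in a single pass, since the triangle inequality inside the integral bounds both signs at once. The paper instead avoids integration entirely: it defines $g(x) = \tfrac{L}{2}x^{\intercal}x - f(x)$, shows $(\nabla g(y)-\nabla g(x))^{\intercal}(y-x) \geq 0$ using the Lipschitz bound and Cauchy--Schwarz, concludes that $g$ is convex, and reads the one-sided inequality $f(y)-f(x)-\langle\nabla f(x),y-x\rangle \leq \tfrac{L}{2}\|y-x\|_2^2$ off the first-order convexity condition for $g$; the absolute value then requires a second application of the same argument to $-f$. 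The trade-off: your proof is self-contained modulo the fundamental theorem of calculus, while the paper's proof is integration-free but silently invokes the fact that gradient monotonicity implies convexity --- a standard equivalence, but one whose proof typically runs through a one-dimensional argument much like the one you wrote down. Both yield the same sharp constant $L/2$.
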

	\begin{proof}
		Let $g(x)=\frac{L}{2}x^{T}x-f(x)$. Since $f(\cdot)$ has $L$-Lipschitz continous gradient where $\|\nabla f(y)-\nabla f(x)\|_2 \leq L\|y-x\|_2$, we have $(\nabla f(y)-\nabla f(x))^T (y-x)\leq L\|y-x\|_2^2$ by Cauchy-Schwartz. This implies
		\begin{align*}
		(\nabla g(y)-\nabla g(x))^T (y-x)=L\|y-x\|_2^2 - (\nabla f(y)-\nabla f(x))^T (y-x) \geq 0,
		\end{align*}
		which shows $g(\cdot)$ is convex. The convexity of $g(\cdot)$ implies
		\begin{equation*}
		0 \leq g(y)-g(x)-\nabla g(x)^T(y-x)=\frac{L}{2}\|y-x\|_2^2 -(f(y)-f(x)-\langle \nabla f(x), y-x\rangle).
		\end{equation*}
		So we have $f(y)-f(x)-\langle \nabla f(x), y-x\rangle \leq \frac{L}{2}\|y-x\|_2^2$. Since $-f(\cdot)$ also has $L$-Lipschitz continous gradient, we can substitute $f(\cdot)$ with $-f(\cdot)$ from the above equation and deduce that $|f(y)-f(x)-\langle \nabla f(x), y-x\rangle| \leq \frac{L}{2}\|y-x\|_2^2$.
		\end{proof}

	\begin{Lemma}\label{lipschitz}
	Let $\{f_i(\cdot)\}_{1 \leq i \leq n}:\mathbb{R}^d \rightarrow \mathbb{R}$ be bounded Lipschitz function. Then $\sum_{i=1}^n f_i(\cdot)$, $\prod_{i=1}^n f_i(\cdot)$ and $[f_1(\cdot),...,f_n(\cdot)]^\intercal$ are also Lipschitz functions. 
	\end{Lemma}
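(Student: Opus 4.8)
\section*{Proof proposal}

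The plan is to isolate the one place where the boundedness hypothesis is actually needed, namely the product, and dispatch the other two claims by direct estimates. Write $M_i = \sup_x \abs{f_i(x)}$ for the uniform bound and $L_i$ for the Lipschitz constant of each $f_i$, so that $\abs{f_i(x)} \le M_i$ and $\abs{f_i(x) - f_i(y)} \le L_i \norm{x-y}_2$ for all $x,y$. I will treat the three statements separately, in each case exhibiting an explicit Lipschitz constant.

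For the sum, the triangle inequality gives immediately
\[
\abs*{\sum_{i=1}^n f_i(x) - \sum_{i=1}^n f_i(y)} \le \sum_{i=1}^n \abs{f_i(x) - f_i(y)} \le \Bigl(\sum_{i=1}^n L_i\Bigr)\norm{x-y}_2,
\]
so the sum is Lipschitz with constant $\sum_i L_i$, and boundedness is not even used. For the vector-valued map $G = [f_1,\dots,f_n]^\intercal$ I would estimate the Euclidean norm of the difference coordinatewise,
\[
\norm{G(x) - G(y)}_2 = \Bigl(\sum_{i=1}^n \abs{f_i(x) - f_i(y)}^2\Bigr)^{1/2} \le \Bigl(\sum_{i=1}^n L_i^2\Bigr)^{1/2}\norm{x-y}_2,
\]
giving Lipschitz constant $(\sum_i L_i^2)^{1/2}$; again boundedness plays no role.

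The product is the only substantive step, and it is exactly where boundedness is essential. I would argue by induction on $n$, the case $n=1$ being trivial, so that it suffices to show the product $fg$ of two bounded Lipschitz functions is again bounded, with $\sup\abs{fg}\le M_f M_g$, and Lipschitz. The latter follows from the add-and-subtract identity
\[
f(x)g(x) - f(y)g(y) = f(x)\bigl(g(x) - g(y)\bigr) + g(y)\bigl(f(x) - f(y)\bigr),
\]
which yields $\abs{f(x)g(x) - f(y)g(y)} \le (M_f L_g + M_g L_f)\norm{x-y}_2$; iterating shows $\prod_{i=1}^n f_i$ is Lipschitz with constant $\sum_{k=1}^n L_k \prod_{i\ne k} M_i$. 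The main obstacle is precisely this step, and the point worth stressing is that boundedness cannot be dropped: the cross terms $M_f L_g$ and $M_g L_f$ are otherwise uncontrolled, as $f(x)=g(x)=x$ on $\mathbb{R}$ already shows, since both are Lipschitz while $fg = x^2$ is not. The inductive two-factor identity is the mechanism that converts the boundedness hypothesis into a finite Lipschitz constant, after which the sum and vector claims are immediate.
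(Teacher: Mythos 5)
Your proposal is correct and follows essentially the same route as the paper: triangle inequality for the sum, the identical add-and-subtract decomposition $f(x)g(x)-f(y)g(y)=f(x)\bigl(g(x)-g(y)\bigr)+g(y)\bigl(f(x)-f(y)\bigr)$ with induction for the product, and a coordinatewise Euclidean estimate for the vector-valued map. The only difference is bookkeeping—you carry individual constants $M_i, L_i$ throughout (yielding the sharper constant $\sum_k L_k \prod_{i\ne k} M_i$), whereas the paper normalizes to $\abs{f_i}\le 1$ with a common $L$ and rescales at the end—which is cosmetic rather than a different argument.
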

	\begin{proof}
		Suppose the $|f_i(\cdot)| \leq 1$ and $|f_i(x_1)-f_i(x_2)| \leq L \|x_1-x_2\|_2$ for any $x_1,x_2 \in \mathbb{R}^d$ and $1 \leq i \leq n$, then
	\begin{align*}
	|(\sum_{i=1}^n f_i)(x_1)-(\sum_{i=1}^n f_i)(x_2)| \leq nL\|x_1-x_2\|_2.
	\end{align*}
	On the other hand,
	\begin{align*}
	|(f_1\cdot f_2)(x_1)-(f_1\cdot f_2)(x_2)| &\leq |f_1(x_1)f_2(x_1)-f_1(x_1)f_2(x_2)+f_1(x_1)f_2(x_2)-f_1(x_2)f_2(x_2)| \nonumber\\
	& \leq |f_1(x_1)||f_2(x_1)-f_2(x_2)|+|f_2(x_2)||f_1(x_1)-f_1(x_2)|\nonumber\\
	& \leq (L+L)\|x_1-x_2\|_2.
	\end{align*}
	Since $|f_1\cdot f_2|\leq 1$, it follows from induction that $\prod_{i=1}^n f_i(\cdot)$ is Lipschitz continous with constant $nL$.For the general case where $|f_i(\cdot)| \leq M$, apply the lemma to each $\frac{f_i(\cdot)}{M}$, the Lipschitz constant of $\prod_{i=1}^n f_i(\cdot)$ becomes $nM^nL$.
	
	Finally, considering the function $[f_1(x),...,f_n(x)]^\intercal:\mathbb{R}^{d}\rightarrow \mathbb{R}^n$
	\begin{align*}
	\|[f_1(x_1),...,f_n(x_1)]^\intercal-[f_1(x_2),...,f_n(x_2)]^\intercal\|_2 =\sqrt{\sum_{j=1}^n (f_j(x_1)-f_j(x_2))^2} \leq \sqrt{n} L \|x_1-x_2\|_2 
	\end{align*}
	\end{proof}
\begin{Lemma}
Given a sequence of real number $a_i$, $1 \leq i \leq N$ and a positive integer $N$, we have
\begin{equation}\label{elementary}
|\sum_{i=1}^N a_i|^p \leq N^{p-1} \sum_{i=1}^N |a_i|^p
\end{equation}
\end{Lemma}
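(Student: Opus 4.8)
The plan is to prove this for the regime $p \geq 1$, which is the case implicitly needed in the paper (these elementary bounds are used to control $p$-th moments for exponents at least one). First I would dispose of the signs: by the triangle inequality $\lvert \sum_{i=1}^N a_i \rvert \leq \sum_{i=1}^N \lvert a_i \rvert$, and since $t \mapsto t^p$ is nondecreasing on $[0,\infty)$ for $p \geq 1$, it suffices to establish the cleaner inequality
\begin{equation*}
\Big(\sum_{i=1}^N \lvert a_i \rvert\Big)^p \leq N^{p-1}\sum_{i=1}^N \lvert a_i\rvert^p.
\end{equation*}

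The core step is convexity. Since $\phi(t) = t^p$ is convex on $[0,\infty)$ for $p \geq 1$, Jensen's inequality applied to the uniform average of the nonnegative numbers $\lvert a_1\rvert,\dots,\lvert a_N\rvert$ gives
\begin{equation*}
\Big(\frac{1}{N}\sum_{i=1}^N \lvert a_i\rvert\Big)^p \leq \frac{1}{N}\sum_{i=1}^N \lvert a_i\rvert^p.
\end{equation*}
Multiplying both sides by $N^p$ yields exactly the displayed bound, and chaining it with the triangle-inequality reduction completes the argument.

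An alternative route I would keep in reserve is H\"older's inequality with conjugate exponents $p$ and $q = p/(p-1)$, applied to the pointwise product $\sum_{i=1}^N 1 \cdot \lvert a_i\rvert$. This directly produces $\sum_{i=1}^N \lvert a_i\rvert \leq N^{1/q}\big(\sum_{i=1}^N \lvert a_i\rvert^p\big)^{1/p} = N^{(p-1)/p}\big(\sum_{i=1}^N \lvert a_i\rvert^p\big)^{1/p}$, and raising to the $p$-th power recovers the same inequality. Both approaches are routine, so there is no genuine obstacle. The only point meriting care is to state the hypothesis $p \geq 1$ explicitly: for $0 < p < 1$ the function $t^p$ is concave, the convexity argument reverses, and the factor $N^{p-1}$ turns into a lower bound rather than the claimed upper bound, so the lemma as written would fail.
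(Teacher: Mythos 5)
Your proof is correct. For comparison: the paper states this lemma without any proof at all (it is treated as an elementary fact and used immediately afterward in the variance bounds), so there is no argument of the paper's to measure yours against; your Jensen/convexity derivation, with the H\"older route as an equivalent alternative, is the standard way to fill this gap. Your final remark is also a genuine improvement on the paper's statement: the inequality as written requires $p \geq 1$ (it reverses for $0 < p < 1$), a hypothesis the paper omits, although every application in the paper uses the exponents $2$ or $4$, so the omission is harmless there.
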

	
	\subsection{Properties of the Unbiased  Gradient Simulation Algorithm}
	In the following section, we present some properties of the output $W(x,v_1)$ from Algorithm 1. We first prove the unbiasedness of $W(x,v_1)$.
	\begin{Proposition}
		For any $x \in \mathcal{D}$, sample $v_1 \sim v$, then $W(x,v_1)$ is an unbiased estimate of $ \nabla \mathbb{E}_v f_v\{\mathbb{E}_w g_w(x)\}$.
	\end{Proposition}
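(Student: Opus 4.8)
The plan is to establish unbiasedness by the standard multilevel randomization (debiasing) argument, carried out in two stages: first I would fix the component $v_1$ and show that the conditional expectation $\mathbb{E}[W(x,v_1)\mid v_1]$ equals the gradient of $f_{v_1}(\mathbb{E}_w g_w(x))$, and then I would average over $v_1\sim v$ and interchange the gradient with $\mathbb{E}_v$. For fixed $v_1$, introduce the single-level estimator $A_n := [\bar{S}_{2^n}(x)]^\intercal \nabla f_{v_1}(\bar{T}_{2^n}(x))$ built from $2^n$ i.i.d.\ samples of $w$, write $a_n := \mathbb{E}[A_n]$, and set the target $D_{v_1}(x) := [\mathbb{E}_w \nabla g_w(x)]^\intercal \nabla f_{v_1}(\mathbb{E}_w g_w(x))$. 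Reading off Algorithm~1 with level index $n = N+n_0$, and using the averaging identities \eqref{SandT}, I would identify the increment
\[
\Delta_n := Y_1 - \tfrac12(Y_2+Y_3) = A_{n+1} - \tfrac12\big(A_n^{(1)}+A_n^{(2)}\big),
\]
where $A_n^{(1)},A_n^{(2)}$ are the single-level estimators formed from the first and second halves of the $2^{n+1}$ samples, and $Y_4 = A_{n_0}$, so that $W(x,v_1) = \Delta_{N+n_0}/\tilde{p}_N + A_{n_0}$.

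The two halves are identically distributed and each is an average of $2^n$ i.i.d.\ draws, so $\mathbb{E}[A_n^{(1)}] = \mathbb{E}[A_n^{(2)}] = a_n$ and $\mathbb{E}[A_{n+1}] = a_{n+1}$; conditioning on $N$ therefore gives $\mathbb{E}[\Delta_{N+n_0}\mid N] = a_{N+n_0+1}-a_{N+n_0}$. Since $N$ is independent of the samples with $\mathbb{P}(N=k)=\tilde{p}_k$, dividing by $\tilde{p}_N$ cancels the geometric weight exactly, so
\[
\mathbb{E}[W(x,v_1)] = \sum_{k\ge 0}\big(a_{k+n_0+1}-a_{k+n_0}\big) + a_{n_0} = \lim_{n\to\infty} a_n .
\]
It then remains to show $\lim_n a_n = D_{v_1}(x)$, which follows from $\bar{S}_{2^n}(x)\to\mathbb{E}_w\nabla g_w(x)$ and $\bar{T}_{2^n}(x)\to\mathbb{E}_w g_w(x)$ almost surely (law of large numbers), continuity of $\nabla f_{v_1}$, and dominated convergence using the uniform bound $l_{\mathcal D}$ of Definition~\ref{ld}.

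The step that genuinely needs care, and which I expect to be the main obstacle, is justifying the interchange of expectation and the infinite sum above (Fubini--Tonelli); everything else is bookkeeping. I would control it by an $L^1$ decay estimate $\mathbb{E}\|\Delta_n\| = O(2^{-n/2})$. Using $\bar{T}_{2^{n+1}}-\bar{T}_{2^n} = \tfrac12(\tilde{T}_{2^n}-\bar{T}_{2^n})$, I would write
\[
\Delta_n = \tfrac12 [\bar{S}_{2^n}]^\intercal\big(\nabla f_{v_1}(\bar{T}_{2^{n+1}}) - \nabla f_{v_1}(\bar{T}_{2^n})\big) + \tfrac12 [\tilde{S}_{2^n}]^\intercal\big(\nabla f_{v_1}(\bar{T}_{2^{n+1}}) - \nabla f_{v_1}(\tilde{T}_{2^n})\big),
\]
bound each parenthesis by the $L$-Lipschitz property of $\nabla f_{v_1}$ times $\tfrac12\|\tilde{T}_{2^n}-\bar{T}_{2^n}\|$, and bound $\|\bar{S}_{2^n}\|,\|\tilde{S}_{2^n}\|$ by $l_{\mathcal D}$. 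Since $\bar{T}_{2^n}$ and $\tilde{T}_{2^n}$ are independent averages of $2^n$ i.i.d.\ bounded vectors, $\mathbb{E}\|\tilde{T}_{2^n}-\bar{T}_{2^n}\| \le (\mathbb{E}\|\tilde{T}_{2^n}-\bar{T}_{2^n}\|^2)^{1/2} = O(2^{-n/2})$, so $\sum_k \mathbb{E}\|\Delta_{k+n_0}\| < \infty$ and $\mathbb{E}\|W(x,v_1)\| = \sum_k \mathbb{E}\|\Delta_{k+n_0}\| + \mathbb{E}\|A_{n_0}\| < \infty$, which legitimizes every rearrangement.

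Finally I would average over $v_1\sim v$. Because $(\mathbb{E}_w\nabla g_w(x))^\intercal$ does not depend on $v_1$,
\[
\mathbb{E}_{v_1}\mathbb{E}[W(x,v_1)] = (\mathbb{E}_w\nabla g_w(x))^\intercal\, \mathbb{E}_v[\nabla f_v(\mathbb{E}_w g_w(x))] = (\mathbb{E}_w\nabla g_w(x))^\intercal\, \nabla\, \mathbb{E}_v f_v(\mathbb{E}_w g_w(x)),
\]
where the last equality interchanges $\nabla$ and $\mathbb{E}_v$, again justified by the uniform bound $l_{\mathcal D}$ and dominated convergence. By the chain rule this equals $\nabla \mathbb{E}_v f_v\{\mathbb{E}_w g_w(x)\}$, which is the desired conclusion.
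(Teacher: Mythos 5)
Your proposal is correct and follows essentially the same route as the paper's own proof: the same telescoping multilevel decomposition with cancellation of the geometric weights $\tilde{p}_N$, identification of $\lim_n a_n$ via the law of large numbers, continuity of $\nabla f_{v_1}$ and bounded convergence using the uniform bound $l_{\mathcal{D}}$, and finally averaging over $v_1 \sim v$. The one difference is that you explicitly establish the $L^1$ decay $\mathbb{E}\|\Delta_n\| = O(2^{-n/2})$ to justify interchanging the expectation with the infinite sum, a Fubini step the paper performs silently (and which is only backed a posteriori by its separate finite-variance proposition), so your write-up is, if anything, more careful on that point.
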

	\begin{proof}
		Fix $v_1$ and $x \in \mathcal{D}$, we will show that the output $W(x,v_1)$ is an unbiased estimate of $ f_{v_1}\{\mathbb{E}_w g_w(x)\}$. According to Algorithm 1, we have,
		\begin{align*}
			 \mathbb{E}W(x,v_1)
			 =&\sum_{n=0}^{\infty} \mathbb{E}[W(x,v_1)|N=n]\cdot \mathbb{P}(N=n) = \sum_{n=0}^{\infty} \frac{\mathbb{E}[{Y_1 - 0.5(Y_2 + Y_3)}|N=n]}{\tilde{p}_n}\cdot \tilde{p}_n +\mathbb{E}Y_4  \nonumber \\
			 =&\sum_{n = 0}^\infty \mathbb{E}  \bigg([\bar{S}_{2^{n+n_0+1}}(x)]^\intercal \cdot \nabla f_{v_1}(\bar{T}_{2^{n+n_0+1}}(x))-0.5\Big([\bar{S}_{2^{n+n_0}}(x)]^\intercal \cdot \nabla f_{v_1}(\bar{T}_{2^{n+n_0}}(x))+[\tilde{S}_{2^{n+n_0}}(x)]^\intercal \cdot \nabla f_{v_1}(\tilde{T}_{2^{n+n_0}}(x))\Big)\bigg) \\
			&\quad+ \mathbb{E} \big[[\bar{S}_{2^{n_0}}(x)]^\intercal \cdot \nabla f_{v_1}(\bar{T}_{2^{n_0}}(x))\big] \nonumber\\
			=&\Big(\sum_{n=0}^{\infty} \mathbb{E}  [[\bar{S}_{2^{n+n_0+1}}(x)]^\intercal \cdot \nabla f_{v_1}(\bar{T}_{2^{n+n_0+1}}(x))]-\mathbb{E}[[\bar{S}_{2^{n+n_0}}(x)]^\intercal \cdot \nabla f_{v_1}(\bar{T}_{2^{n+n_0}}(x))] \Big) + \mathbb{E}[[\bar{S}_{2^{n_0}}(x)]^\intercal \cdot \nabla f_{v_1}(\bar{T}_{2^{n_0}}(x))] \nonumber\\
			=& \lim_{n \rightarrow \infty} \mathbb{E}  \big([\bar{S}_{2^{n+n_0+1}}(x)]^\intercal \cdot \nabla f_{v_1}(\bar{T}_{2^{n+n_0+1}}(x))
			= \mathbb{E}[S(x)]^\intercal \cdot \nabla f_{v_1}(\mathbb{E}[T(x)]) 
			= [\mathbb{E}_w \nabla g_w(x)]^\intercal \cdot \nabla f_{v_1}(\mathbb{E} g_w(x))\nonumber\\
			=& \nabla (f_{v_1}(\mathbb{E}_w g_w(x)))
		\end{align*}
	where the third inequality follows from the fact that $\mathbb{E}[[\bar{S}_{2^{n+n_0}}(x)]^\intercal \cdot \nabla f_{v_1}(\bar{T}_{2^{n+n_0}}(x))]=\mathbb{E}[[\tilde{S}_{2^{n+n_0}}(x)]^\intercal \cdot \nabla f_{v_1}(\tilde{T}_{2^{n+n_0}}(x))]$ for any $n$. The next line follows from the continuity of $\nabla f_{v_1}(\cdot)$, Assumptions 2-3 and the bounded convergence theorem.
	
	Finally, taking expectation w.r.t $v_1$, we can see that $\mathbb{E}_{v_1\sim v} W(x,v_1)=\mathbb{E}_v \nabla (f_v(\mathbb{E}_w g_w(x)))=  \nabla (\mathbb{E}_vf_v(\mathbb{E}_w g_w(x)))$.
	\end{proof}
We now proceed to show $W(x,v_1)$ has finite variance and finite expectation cost to generate.
	\begin{Proposition}\label{finitev}
		 For any $x \in \mathcal{D}$, sample $v_1 \sim v$, then  $W(x,v_1)$ has finite variance.
	\end{Proposition}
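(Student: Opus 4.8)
The plan is to reduce the finite-variance claim to showing $\mathbb{E}\|W(x,v_1)\|^2 < \infty$, since Proposition 1 already gives a finite mean. Conditioning on the geometric level $N=n$, using that the samples $\{w_i\}$ are drawn independently of $N$, and applying $\|a+b\|^2 \le 2\|a\|^2 + 2\|b\|^2$, I would write
\begin{align*}
\mathbb{E}\|W(x,v_1)\|^2 \le 2\,\mathbb{E}\Big\|\tfrac{\Delta_N}{\tilde p_N}\Big\|^2 + 2\,\mathbb{E}\|Y_4\|^2 = 2\sum_{n=0}^\infty \frac{\mathbb{E}\|\Delta_n\|^2}{\tilde p_n} + 2\,\mathbb{E}\|Y_4\|^2,
\end{align*}
where $\Delta_n := Y_1 - \tfrac12(Y_2+Y_3)$ denotes the level-$n$ increment computed with fresh samples. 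The term $\mathbb{E}\|Y_4\|^2$ is finite because $Y_4$ is a product of averages of quantities bounded by $l_{\mathcal{D}}$ (Definition \ref{ld}). Everything therefore reduces to convergence of the series $\sum_n \mathbb{E}\|\Delta_n\|^2/\tilde p_n$.

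The crux is a decay estimate $\mathbb{E}\|\Delta_n\|^2 = O(2^{-2(n+n_0)})$. Writing $m = 2^{n+n_0}$ and $\Phi(S,T) := S^\intercal \nabla f_{v_1}(T)$, and using the halving identities $\bar S_{2m}=\tfrac12(\bar S_m + \tilde S_m)$, $\bar T_{2m}=\tfrac12(\bar T_m + \tilde T_m)$ from Definition \ref{sandt}, the increment takes the symmetric second-difference form
\begin{align*}
\Delta_n = \Phi\!\Big(\tfrac{\bar S_m + \tilde S_m}{2}, \tfrac{\bar T_m + \tilde T_m}{2}\Big) - \tfrac12\Phi(\bar S_m, \bar T_m) - \tfrac12\Phi(\tilde S_m, \tilde T_m).
\end{align*}
Setting $u = \bar S_m - \tilde S_m$ and $v = \bar T_m - \tilde T_m$ and Taylor-expanding $\nabla f_{v_1}$ about the midpoint $\bar T_{2m}$ to second order, I would show that the zeroth- and first-order contributions cancel exactly, leaving a leading bilinear term $-\tfrac14\, u^\intercal \nabla^2 f_{v_1}(\bar T_{2m})\, v$ together with remainders controlled by $\|v\|^2$ and $\|u\|\,\|v\|^2$. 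This is where Assumptions 2--3 enter: $\Phi$ is smooth in $(S,T)$ because $f_{v_1}$ is twice continuously differentiable with Lipschitz Hessian, so $\nabla^2 f_{v_1}$ is bounded and the second-order remainder is controlled, and all arguments remain in the bounded set $\mathrm{conv}(\mathcal{G})$ where the bounds of Definition \ref{ld} apply. The output is the pointwise estimate $\|\Delta_n\| \le C\big(\|u\|\,\|v\| + \|v\|^2 + \|u\|\,\|v\|^2\big)$ with $C$ depending only on $l_{\mathcal{D}}$.

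To turn this into a moment bound I would use that $u$ and $v$ are each the difference of two independent averages of $m$ i.i.d. mean-matched bounded quantities, hence mean-zero sums of $2m$ independent bounded increments scaled by $1/m$. A direct fourth-moment expansion (the odd-order cross terms vanish), reduced to coordinates via Lemma 3, gives $\mathbb{E}\|u\|^4 = O(m^{-2})$ and $\mathbb{E}\|v\|^4 = O(m^{-2})$. Combining the pointwise bound with Cauchy--Schwarz, the cross term $\mathbb{E}\|u\|^2\|v\|^2$ and $\mathbb{E}\|v\|^4$ are both $O(m^{-2})$ while the remaining terms are of strictly smaller order, so $\mathbb{E}\|\Delta_n\|^2 = O(m^{-2}) = O(2^{-2(n+n_0)})$. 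Substituting into the series and recalling $\tilde p_n = (1-p)p^n$ with $p = 2^{-\gamma}$,
\begin{align*}
\sum_{n=0}^\infty \frac{\mathbb{E}\|\Delta_n\|^2}{\tilde p_n} \le \frac{C'\, 2^{-2n_0}}{1-p}\sum_{n=0}^\infty \frac{2^{-2n}}{2^{-\gamma n}} = \frac{C'\,2^{-2n_0}}{1-p}\sum_{n=0}^\infty 2^{-(2-\gamma)n} < \infty,
\end{align*}
the geometric series converging precisely because $\gamma < 2$. The main obstacle is the second step: establishing the first-order cancellation cleanly and verifying the $O(m^{-2})$ fourth-moment rate, which is exactly where the interplay between the MLMC halving structure and the constraint $\gamma < 2$ becomes essential.
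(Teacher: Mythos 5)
Your proposal is correct, and its overall skeleton matches the paper's: reduce finite variance to $\mathbb{E}\|W(x,v_1)\|_2^2 < \infty$, condition on $N$, split off the $Y_4$ term via \eqref{elementary}, establish the increment decay $\mathbb{E}\|\Delta_n\|_2^2 = O(2^{-2(n+n_0)})$, and sum the series $\sum_n 2^{-2n}/\tilde{p}_n$, which converges precisely because $\gamma<2$. Where you genuinely depart from the paper is the cancellation mechanism inside $\Delta_n$. The paper Taylor-expands all three terms $G(\bar{S}_{2m},\bar{T}_{2m})$, $G(\bar{S}_m,\bar{T}_m)$, $G(\tilde{S}_m,\tilde{T}_m)$, where $G(S,T)=S^\intercal \nabla f_{v_1}(T)$ and $m=2^{n+n_0}$, around the \emph{deterministic} point $(\mathbb{E}S(x),\mathbb{E}T(x))$; by the halving identity \eqref{SandT} the zeroth- and first-order terms cancel identically (linearity of the first-order term in the averages), so $\Delta_n$ is exactly a combination of three Taylor remainders, and Lemma \ref{secondorder} reduces the bound to fourth \emph{central} moments of $\bar{S},\bar{T}$, handled in \eqref{fourthmomentbound}. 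You instead expand about the \emph{random midpoint} $(\bar{S}_{2m},\bar{T}_{2m})$ and cancel by antisymmetry in $(\pm u/2,\pm v/2)$, which exposes the explicit leading bilinear term $-\tfrac{1}{4}u^\intercal \nabla^2 f_{v_1}(\bar{T}_{2m})v$ and reduces the bound to fourth moments of the half-sample differences $u,v$. Both routes need the same ingredients: the regularity in Assumptions 2--3, the uniform bounds of Definition \ref{ld} (which, importantly for your version, hold uniformly over $\mathcal{D}$ and $\mathcal{G}$ and hence at a random expansion point), the $O(m^{-2})$ fourth-moment computation for scaled sums of bounded i.i.d. variables, and the condition $\gamma<2$. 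What each buys: the paper's fixed-center expansion treats all three terms on an equal footing as pure remainders, making the moment bookkeeping mechanical; your midpoint expansion is the classical antithetic-difference MLMC argument, and it is more transparent about the source of the rate --- the variance is driven by the product $\mathbb{E}\|u\|^2\|v\|^2 \sim m^{-2}$ via Cauchy--Schwarz rather than by remainder terms alone.
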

	\begin{proof}
	Fix  $v_1 \in \Omega_v$ and $x \in \mathcal{D}$, we will show that $\mathbb{E}\|W(x,v_1)\|_2^2 < \infty$.
	\begin{align}\label{variance}
		\mathbb{E}\|W(x,v_1)\|_2^2 =&\sum_{n=0}^{\infty} \mathbb{E}[\|W(x,v_1)\|_2^2|N=n]\cdot \mathbb{P}(N=n) \leq 2\mathbb{E}\|Y_4\|_2^2+2\frac{\mathbb{E}[\|Y_1 - 0.5(Y_2 + Y_3)\|_2^2|N=n]}{\tilde{p}^2_n} \cdot \tilde{p}_n 
	\end{align}
	where the inequality follows from equation \eqref{elementary}. To proceed with equation \eqref{variance}, notice  $2\mathbb{E}\|Y_4\|^2_2$ can be bounded by $2p \cdot d^2 \cdot l^4_{\mathcal{D}}$ since   $\|Y_4\|^2_2=\|[\bar{S}_{2^{n_0}}]^\intercal \cdot \nabla f(\bar{Y}_{2^{n_0}})\|_2^2 \leq p \cdot d^2 \cdot l^4_{\mathcal{D}} $ based on the definition of $l_{\mathcal{D}}$. 
	
	To bound the second term on the right hand side of \eqref{variance}, we define the following function:
for $x \in \mathcal{H} \subseteq \mathbb{R}^{d \times p }$ and  $y \in \mathcal{G} \subseteq \mathbb{R}^d$, define $G:\mathcal{H} \times \mathcal{G} \rightarrow \mathbb{R}^p$ by $G(x, y) \triangleq x^\intercal \cdot \nabla f_v(y)$. It is straightforward to compute each component of the gradient $\nabla G(x, y) \in \mathbb{R}^p \times (\mathbb{R}^{d \times p} \times \mathbb{R}^d)$ as: 
\begin{align*}
& \frac{\partial[G]_i}{\partial [x]_{kj}}(x,y)= \delta_{ij}\cdot [\nabla f_v(y)]_k=\delta_{ij}\cdot \frac{\partial f_v}{\partial [y]_k}(y) \quad \text{and} \quad \frac{\partial[G]_i}{\partial [y]_{h}}(x,y)=\sum_{k=1}^d [x]_{ki}\cdot \frac{\partial [\nabla f_v]_k}{\partial [y]_h}=\sum_{k=1}^d [x]_{ki}\cdot \frac{\partial^2  f_v}{\partial [y]_k\partial [y]_h}(y)
\end{align*}
where $1\leq i,j\leq p$,$1 \leq k,h \leq d$ and $\delta_{ij}$ is the Kronecker delta ($\delta_{ij}=1$ when $i=j$; $\delta_{ij}=0 $ otherwise). It follows from Assumptions 1, Assumption 3 and Lemma \eqref{lipschitz} that each component of the $\nabla G(x,y)$ is Lipschitz continous with constant $d\cdot l^2_{\mathcal{D}}$ and $\nabla [G(x,y)]_i$ is Lipschitz continous with constant $\sqrt{d\cdot p+d}\cdot d \cdot l^2_{\mathcal{D}}$. Thus if we let $R(x,x_0,y,y_0)\triangleq G(x,y)-G(x_0,y_0)-\nabla G(x_0,y_0) \cdot vec \binom{x-x_0}{y-y_0}$, using Lemma \eqref{secondorder}, we have:
\begin{align}\label{reminder}
\|R(x,x_0,y,y_0)\|_2=&\|G(x,y)-G(x_0,y_0)-\nabla G(x_0,y_0) \cdot  \binom{x-x_0}{y-y_0}\|_2 \nonumber\\
 \leq& \sum_{i=1}^p \|[G(x,y)]_i-[G(x_0,y_0)]_i-\nabla[G(x_0,y_0)]_i\cdot  \binom{x-x_0}{y-y_0}\|_2 \nonumber \\
 \leq &\sum_{i=1}^p \frac{\sqrt{dp+d}\cdot d\cdot l^2_{\mathcal{D}}}{2}(\|x-x_0\|^2_F+\|y-y_0\|_2^2) \nonumber \\
= & \frac{p\cdot \sqrt{dp+d}\cdot d\cdot l^2_{\mathcal{D}}}{2} (\sum_{k=1}^d\sum_{j=1}^p ([x]_{kj}-[x_0]_{kj})^2+\sum_{h=1}^d ([y]_h-[y_0]_h)^2),
\end{align}
for any $x,x_0\in \mathcal{H}$ and $y,y_0 \in \mathcal{G}$.
Now we can bound the second term in \eqref{variance}:
\begin{align}\label{fourth}
&\frac{\mathbb{E}[\|Y_1 - 0.5(Y_2 + Y_3)\|_2^2|N=n]}{\tilde{p}^2_n} \cdot \tilde{p}_n \nonumber\\
=&\sum_{n=0}^{\infty}\frac{1}{\tilde{p}_n}\mathbb{E}\|[\bar{S}_{2^{n+n_0+1}}(x)]^\intercal \cdot \nabla f_{v_1}(\bar{T}_{2^{n+n_0+1}}(x))-0.5\Big([\bar{S}_{2^{n+n_0}}(x)]^\intercal \cdot \nabla f_{v_1}(\bar{T}_{2^{n+n_0}}(x))+[\tilde{S}_{2^{n+n_0}}(x)]^\intercal \cdot \nabla f_{v_1}(\tilde{T}_{2^{n+n_0}}(x))\Big)\|_2^2 \nonumber\\
=& \sum_{n=0}^{\infty}\frac{1}{\tilde{p}_n}\mathbb{E}\|G(\bar{S}_{2^{n+n_0+1}}(x), \bar{T}_{2^{n+n_0+1}}(x))-0.5\Big(G(\bar{S}_{2^{n+n_0}}(x),\bar{T}_{2^{n+n_0}}(x))+G(\tilde{S}_{2^{n+n_0}}(x),\tilde{T}_{2^{n+n_0}}(x))\Big)\|_2^2 \nonumber\\
=& \sum_{n=0}^{\infty}\frac{1}{\tilde{p}_n}\mathbb{E}\|G(\mathbb{E}S(x), \mathbb{E}T(x))+\nabla G(\mathbb{E}S(x), \mathbb{E}T(x))\cdot  \binom{\bar{S}_{2^{n+n_0+1}}(x)-\mathbb{E}S(x)}{\bar{T}_{2^{n+n_0+1}}(x)-\mathbb{E}T(x)}+ R(\bar{S}_{2^{n+n_0+1}}(x),\mathbb{E}S(x),\bar{T}_{2^{n+n_0+1}}(x),\mathbb{E}T(x)) \nonumber \\
& - G(\mathbb{E}S(x), \mathbb{E}T(x))-\nabla G(\mathbb{E}S(x), \mathbb{E}T(x))\cdot  \binom{\frac{\bar{S}_{2^{n+n_0}}(x)+\widetilde{S}_{2^{n+n_0}}(x)}{2}-\mathbb{E}S(x)}{\frac{\bar{T}_{2^{n+n_0}}(x)+\widetilde{T}_{2^{n+n_0}}(x)}{2}-\mathbb{E}T(x)}\nonumber\\
 & -\frac{1}{2}R(\bar{S}_{2^{n+n_0}}(x),\mathbb{E}S(x),\bar{T}_{2^{n+n_0}}(x),\mathbb{E}T(x)) -\frac{1}{2}R(\bar{S}_{2^{n+n_0}}(x),\mathbb{E}S(x),\bar{T}_{2^{n+n_0}}(x),\mathbb{E}T(x))\|_2^2 \nonumber \\
 =& \sum_{n=0}^{\infty}\frac{1}{\tilde{p}_n}\mathbb{E}\|R(\bar{S}_{2^{n+n_0+1}}(x),\mathbb{E}S(x),\bar{T}_{2^{n+n_0+1}}(x),\mathbb{E}T(x))\nonumber \\
 &-\frac{1}{2}R(\bar{S}_{2^{n+n_0}}(x),\mathbb{E}S(x),\bar{T}_{2^{n+n_0}}(x),\mathbb{E}T(x)) -\frac{1}{2}R(\bar{S}_{2^{n+n_0}}(x),\mathbb{E}S(x),\bar{T}_{2^{n+n_0}}(x),\mathbb{E}T(x))\|_2^2 \nonumber \\
 \leq & \sum_{n=0}^{\infty}\frac{3}{4\tilde{p}_n} p^3(dp+d)d^4l_{\mathcal{D}}^4  \bigg(
 \sum_{h=1}^d\mathbb{E}([\bar{T}_{2^{n+n_0+1}}(x)]_{h}-[\mathbb{E}T(x)]_{h})^4+\frac{1}{4}\mathbb{E}([\bar{T}_{2^{n+n_0}}(x)]_{h}-[\mathbb{E}T(x)]_{h})^4+\frac{1}{4}\mathbb{E}([\tilde{T}_{2^{n+n_0}}(x)]_{h}-[\mathbb{E}T(x)]_{h})^4 \nonumber\\
 &+\sum_{k=1}^d\sum_{j=1}^p \mathbb{E}([\bar{S}_{2^{n+n_0+1}}(x)]_{kj}-[\mathbb{E}S(x)]_{kj})^4+ \frac{1}{4}\mathbb{E}([\bar{S}_{2^{n+n_0}}(x)]_{kj}-[\mathbb{E}S(x)]_{kj})^4 +\frac{1}{4}\mathbb{E}([\tilde{S}_{2^{n+n_0}}(x)]_{kj}-[\mathbb{E}S(x)]_{kj})^4 \bigg),
\end{align}
where the last inequality follows from equation \eqref{elementary} and  \eqref{reminder}. The equality above it  follows from equation \eqref{SandT}. 
To proceed with equation \eqref{fourth}, we notice that  $\{[S_i(x)]_{kj}$,$[T_i(x)]_h\}_{i \geq 1}$ are I.I.D samples with finite fourth central moment ($\max\{\mathbb{E}([S(x)]_{kj}-[\mathbb{E}S(x)]_{kj})^4,\mathbb{E}([T(x)]_{h}-[\mathbb{E}T(x)]_{h})^4\} \leq l^4_{\mathcal{D}}$) for any $1 \leq j \leq p, 1 \leq k,h \leq d$. Thus we can use Cauchy-Schwartz inequality and equation \eqref{elementary} to derive:
\begin{align}\label{fourthmomentbound}
&\mathbb{E}([\bar{S}_n(x)]_{kj}-[\mathbb{E}S(x)]_{kj})^4 \nonumber\\
=& \frac{\sum_{i=1}^n \mathbb{E}([S_i(x)]_{kj}-[\mathbb{E}S(x)]_{kj})^4}{n^4}+\frac{6\sum_{i=1}^n\sum_{j=i+1}^n \mathbb{E}([S_i(x)]_{kj}-[\mathbb{E}S(x)]_{kj})^2\mathbb{E}([S_j(x)]_{kj}-[\mathbb{E}S(x)]_{kj})^2}{n^4} \nonumber \\
\leq& \frac{\sum_{i=1}^n \mathbb{E}([S_i(x)]_{kj}-[\mathbb{E}S(x)]_{kj})^4}{n^4}+\frac{6\sum_{i=1}^n\sum_{j=i+1}^n \sqrt{\mathbb{E}([S_i(x)]_{kj}-[\mathbb{E}S(x)]_{kj})^4}\cdot\sqrt{\mathbb{E}([S_j(x)]_{kj}-[\mathbb{E}S(x)]_{kj})^4}}{n^4} \nonumber\\
\leq& l^4_{\mathcal{D}} (\frac{1}{n^3}+6\cdot\frac{n^2-n}{2n^4}) \leq 3\cdot\frac{l^4_{\mathcal{D}}}{n^2}.
\end{align}
for any $1 \leq j \leq p, 1 \leq k \leq d$ and $n \geq 1$. The same result holds for $\mathbb{E}([\bar{T}_n(x)]_{h}-[\mathbb{E}T(x)]_{h})^4$ where $1 \leq h \leq d$.
Now using \eqref{fourthmomentbound}, we continue on \eqref{fourth} to get
\begin{align}
\frac{\mathbb{E}[\|Y_1 - 0.5(Y_2 + Y_3)\|_2^2|N=n]}{\tilde{p}^2_n} \cdot \tilde{p}_n \leq \frac{3}{4\tilde{p}_n} p^3d^4(dp+d)\cdot l^4_{\mathcal{D}}\cdot 3(d+dp)\cdot \bigg( 4^{-n_0} \frac{3l^4_{\mathcal{D}}}{4}\cdot 2^{-2n}\bigg)
.\end{align}
Define $C^{\prime}_{\mathcal{D}} \triangleq 2p  d^2  l^4_{\mathcal{D}}+ \frac{54}{16\cdot4^{n_0}}p^3 d^4 (dp+d)^2 l^8_{\mathcal{D}} \cdot (1-0.5^{\gamma})^{-1} \cdot (1-2^{\gamma-2})^{-1}$. Notice $0<C^{\prime}_{\mathcal{D}} < \infty$ because $\gamma < 2$. Now \eqref{variance} becomes
\begin{align}\label{Cvariance}
\mathbb{E}\|W(x,v_1)\|_2^2 \leq& 2p\cdot d^2\cdot l^4_{\mathcal{D}}+ 2\frac{27}{16\cdot4^{n_0}}p^3 d^4 (dp+d)^2 l^8_{\mathcal{D}} \cdot (\sum_{n=0}^{\infty}\frac{2^{-2n}}{\tilde{p}_n}) 
= C^{\prime}_{\mathcal{D}},
\end{align}
following from the fact  $\tilde{p}_n=(1-0.5^{\gamma})\cdot{0.5}^{\gamma  n}$ and the definition of $C^{\prime}_{\mathcal{D}}$. It is worth noting that the convergence of the series above relies on $y<2$.

Finally, considering the case where we sample $v_1 \sim v$, we have 
\begin{equation*}
Var(W(x,v_1)) \leq \mathbb{E}_{v_1\sim v}\|W(x,v_1)\|_2^2 = \mathbb{E}[\mathbb{E}\|W(x,v_1)\|_2^2 |v_1] \leq \mathbb{E}[C^{\prime}_{\mathcal{D}}|v_1]=C^{\prime}_{\mathcal{D}} < \infty.
\end{equation*}
\end{proof}

\begin{Proposition}
	For any $x \in \mathcal{D}$ and $v_1 \in \Omega_v$, the number of random variables one needs to generate (simulation cost) in order to construct $W(x,v_1)$ has finite expectation.
\end{Proposition}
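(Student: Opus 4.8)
The plan is to identify the simulation cost explicitly with the number of draws dictated by the geometric variable $N$ in Algorithm 1, and then reduce the claim to an elementary convergence of a geometric series. First I would observe that, conditional on $N=n$, Algorithm 1 draws exactly $2^{n+n_0+1}$ i.i.d.\ copies $\{w_i\}$ (from which the $S_i(x),T_i(x)$ are formed), together with the single draw of $N$ itself. Hence the total number of random variables generated is $C = 2^{N+n_0+1}+1$, and it suffices to show $\mathbb{E}[C]<\infty$, i.e.\ $\mathbb{E}\big[2^{N+n_0+1}\big]<\infty$, since the additive constant is harmless.

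Next I would compute this expectation directly against the geometric law $\mathbb{P}(N=n)=(1-p)p^n$ for $n\geq 0$, with $p=0.5^{\gamma}$ as in the algorithm. This gives
\[
\mathbb{E}\big[2^{N+n_0+1}\big] = 2^{n_0+1}(1-p)\sum_{n=0}^{\infty}(2p)^n .
\]
The geometric series converges precisely when $2p<1$, that is when $2^{1-\gamma}<1$, which holds because $\gamma>1$. In that regime the sum equals $(1-p)/(1-2p)$, so that
\[
\mathbb{E}[C] = 2^{n_0+1}\,\frac{1-p}{1-2p}+1 < \infty ,
\]
as required. Taking the further expectation over $v_1\sim v$ changes nothing, since this bound is uniform in $v_1$ (the cost depends only on $N$ and the fixed constants $n_0,\gamma$).

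The only genuine point requiring care is the role of the rate parameter: finiteness of the expected cost is exactly the constraint $\gamma>1$ (ensuring $2p=2^{1-\gamma}<1$), which is the complementary half of the admissible range $1<\gamma<2$ whose upper endpoint $\gamma<2$ was what delivered finite variance in Proposition \ref{finitev}. I expect no substantive obstacle beyond pinning down what is counted as a ``random variable'' in the cost; once each $w_i$ and the single draw of $N$ each count as one, the statement follows from the geometric-series computation above. The same argument applies verbatim to Algorithm 3 (whose cost is likewise governed by $2^{N+n_0+1}$), and for Algorithm 2 the truncation of $N$ only reduces the cost, so finiteness is immediate there.
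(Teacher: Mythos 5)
Your proposal is correct and follows essentially the same route as the paper's own proof: both identify the cost as $1+2^{N+n_0+1}$, take the expectation against the geometric law $\mathbb{P}(N=n)=(1-0.5^{\gamma})\,0.5^{\gamma n}$, and reduce to the geometric series $\sum_n (2^{1-\gamma})^n$, which converges exactly because $\gamma>1$, yielding the identical closed form $1+2^{n_0+1}(1-0.5^{\gamma})(1-2^{1-\gamma})^{-1}$. Your additional remarks on the complementary roles of $\gamma>1$ (finite cost) and $\gamma<2$ (finite variance) match the paper's own commentary, so there is nothing to correct.
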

\begin{proof}
Fix $v_1 \in \Omega_v$ and $x \in \mathcal{D}$, denote $cost_W$ to be the number of random variables one needs to generate in order to construct $W(x,v_1)$. In Algorithm 1, we generate one geometric random variable $N$ and $2^{N+n_0+1}$ number of $w_i \sim w$. Thus we have $cost_W=1+2^{N+n_0+1}$. Taking expectation w.r.t. $N$, we conclude
\begin{align*}
\mathbb{E}[cost_W]=\mathbb{E}[\mathbb{E}[cost_W|N]]=&\sum_{n=0}^{\infty}\mathbb{E}[cost_W|N=n]\cdot p(n) \nonumber\\
= &\sum_{n=0}^{\infty} (1+2^{n+n_0+1})\cdot (1-0.5^{\gamma})\cdot 0.5^{\gamma n} \nonumber\\
=& 1+ 2^{n_0+1}\cdot(1-0.5^{\gamma})\cdot(1-2^{1-\gamma})^{-1} < \infty,
\end{align*}
where the convergence of the series above relies on $\gamma>1$.
\end{proof}.

\subsection{Properties of the Variance Reduced Unbiased Gradient Simulation Algorithm}
In the following section, we present a property of the output $W=W(x,v_1)-W(\tilde{x},v_1)+g(\tilde{x})$ from Algorithm 3 that is important in the proof for the convergence rate of Algorithm 4 and 5. We want to show there exist $C_{\mathcal{D}}< \infty$ such that for any $v_1 \in \Omega_v$ and $x,\tilde{x} \in \mathcal{D}$, we have $\mathbb{E}\|W(x,v_1)-W(\tilde{x},v_1)\|_2^2 \leq C_{\mathcal{D}}\|x-\tilde{x}\|_2^2$. In order to do so, we fisrt introduce a couple of lemmas.
\begin{Lemma}[Azuma-Hoeffding]\label{azuma}

Let $X_1,X_2,...X_n$ be I.I.D random variables such that $|X_i| \leq L$ for all $1 \leq i \leq  n$. Then for any $t > 0$, we have:
\begin{equation}
\mathbb{P}(|\sum_{i=1}^n X_i-n\mathbb{E}[X]|>t) \leq 2 \cdot exp \Big(-\frac{t^2}{2nL^2}\Big)
\end{equation}
which implies
\begin{equation}
\mathbb{P}(|\bar{X}_n-\mathbb{E}[X]|>t) \leq 2 \cdot exp \Big(-\frac{nt^2}{2L^2}\Big)
\end{equation}
for any $t>0$.
\end{Lemma}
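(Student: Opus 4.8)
The plan is to prove the one-sided tail bound by the classical Chernoff (exponential Markov) method and then symmetrize and rescale to recover both displayed inequalities. First I would center the variables by setting $Y_i = X_i - \mathbb{E}[X]$, so that each $Y_i$ has mean zero and still lies in an interval of width $2L$. For any $s > 0$, applying Markov's inequality to the nonnegative random variable $\exp\big(s\sum_{i=1}^n Y_i\big)$ and then using independence to factor the moment generating function gives
\[
\mathbb{P}\Big(\sum_{i=1}^n Y_i > t\Big) \leq e^{-st}\,\mathbb{E}\Big[e^{s\sum_{i=1}^n Y_i}\Big] = e^{-st}\prod_{i=1}^n \mathbb{E}\big[e^{sY_i}\big].
\]

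The technical heart of the argument, and the step I expect to be the main obstacle, is the single-variable bound on the moment generating function, i.e.\ Hoeffding's lemma: for a mean-zero random variable $Y$ supported in an interval of width $w = 2L$, one has $\mathbb{E}[e^{sY}] \leq \exp\big(s^2 w^2/8\big) = \exp\big(s^2 L^2/2\big)$. I would establish this by using the convexity of $x \mapsto e^{sx}$ to dominate it by its chord over the support interval, taking expectations (where the mean-zero assumption kills the linear term), and then showing via a second-order Taylor expansion of the resulting log–moment generating function that its second derivative never exceeds $w^2/4$. Substituting this bound into the product above yields
\[
\mathbb{P}\Big(\sum_{i=1}^n Y_i > t\Big) \leq \exp\Big(-st + \tfrac{1}{2}n s^2 L^2\Big).
\]

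Finally I would optimize the exponent over $s > 0$; the minimizer is $s = t/(nL^2)$, which collapses the right-hand side to $\exp\big(-t^2/(2nL^2)\big)$. Running the identical argument on $-Y_i$ bounds the lower tail $\mathbb{P}\big(\sum_{i=1}^n Y_i < -t\big)$ by the same quantity, and a union bound over the two tails produces the two-sided estimate $\mathbb{P}\big(|\sum_{i=1}^n X_i - n\mathbb{E}[X]| > t\big) \leq 2\exp\big(-t^2/(2nL^2)\big)$, which is the first displayed inequality. The second inequality is then immediate: the event $\{|\bar{X}_n - \mathbb{E}[X]| > t\}$ coincides with $\{|\sum_{i=1}^n X_i - n\mathbb{E}[X]| > nt\}$, so replacing $t$ by $nt$ in the first bound gives the factor $\exp\big(-(nt)^2/(2nL^2)\big) = \exp\big(-nt^2/(2L^2)\big)$, completing the proof.
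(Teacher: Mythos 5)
Your proof is correct. Note that the paper itself offers no proof of this lemma at all --- it is invoked as the classical (Azuma--)Hoeffding inequality and used as a black box in the proof of the subsequent covering-argument lemma --- so there is no in-paper argument to compare against. What you wrote is the standard Chernoff-bound proof: centering, Markov's inequality applied to $e^{s\sum Y_i}$, Hoeffding's lemma $\mathbb{E}[e^{sY}]\leq e^{s^2w^2/8}$ for a mean-zero variable supported in an interval of width $w=2L$, optimization at $s = t/(nL^2)$, and a union bound over the two tails; this recovers exactly the stated constant $2\exp\bigl(-t^2/(2nL^2)\bigr)$, and your rescaling $t \mapsto nt$ for the sample-mean version is also exactly right. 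The one step you correctly flagged as the technical heart, Hoeffding's lemma, is sketched at the right level of detail (chord bound by convexity, mean-zero kills the linear term, second derivative of the log-MGF bounded by $w^2/4$), so the argument is complete.
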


\begin{Lemma}\label{finiteclt}
	Define $diam(\mathcal{D})\triangleq \sup \{\| x-\tilde{x}\|_{\infty}, | x,\tilde{x}\in \mathcal{D}\}$ and $C_{\mathcal{D}}^{\prime\prime} =(2)^{4-2.5p}(l_{\mathcal{D}})^4(2\cdot diam(\mathcal{D}))^p+16p^2l^4_{\mathcal{D}}$. Then:
	\begin{equation}\label{s4}
	\mathbb{E}\bigg[\sup_{\zeta \in \mathcal{D}}\lvert [\bar{S}_n(\zeta)]_{kj}-[\mathbb{E}S(\zeta)]_{kj} \rvert ^4\bigg] \leq C_{\mathcal{D}}^{\prime\prime}\big( \frac{log (4n)}{n}\big)^2
	\end{equation}
 
	\begin{equation}\label{t4} \mathbb{E}\bigg[\sup_{\zeta \in \mathcal{D}}\lvert [\bar{T}_n(\zeta)]_{h}-[\mathbb{E}T(\zeta)]_{h} \rvert ^4\bigg] \leq C_{\mathcal{D}}^{\prime\prime}\big( \frac{log (4n)}{n}\big)^2
	\end{equation}
	\begin{equation}\label {z4}  \mathbb{E}\bigg[\sup_{\zeta \in \mathcal{D}}\lvert [\bar{Z}_n(\zeta)]_{kij}-[\mathbb{E}Z(\zeta)]_{kij} \rvert ^4\bigg] \leq C_{\mathcal{D}}^{\prime\prime}\big( \frac{log (4n)}{n}\big)^2
	\end{equation}
	for any $n \geq 1$, $1 \leq k,h \leq d$ and  $1 \leq i,j \leq p$.
\end{Lemma}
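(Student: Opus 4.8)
The plan is to reduce the uniform (over $\mathcal{D}$) control of each empirical deviation to a finite maximum by a covering argument, and then to bound that finite maximum by combining the concentration inequality of Lemma \ref{azuma} with the layer-cake representation of the fourth moment. I would prove \eqref{s4} in detail; the bounds \eqref{t4} and \eqref{z4} follow by the identical argument applied to the scalar components $[T_i(\zeta)]_h$ and $[Z_i(\zeta)]_{kij}$. Fix the indices $k,j$ and write $\phi_i(\zeta) \triangleq [S_i(\zeta)]_{kj} = \frac{\partial [g_{w_i}]_k}{\partial[\zeta]_j}(\zeta)$. By Definition \ref{ld}, the $\phi_i$ are I.I.D.\ in $i$, uniformly bounded by $l_{\mathcal{D}}$, and Lipschitz in $\zeta$ with constant $l_{\mathcal{D}}$; consequently the centred deviation $D_n(\zeta) \triangleq [\bar S_n(\zeta)]_{kj} - [\mathbb{E}S(\zeta)]_{kj} = \frac{1}{n}\sum_{i=1}^n(\phi_i(\zeta) - \mathbb{E}\phi(\zeta))$ is Lipschitz in $\zeta$ with constant at most $2 l_{\mathcal{D}}$.

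Next I would discretize. Choose an $\epsilon$-net $\{\zeta_1,\dots,\zeta_M\} \subset \mathcal{D}$ in the $\|\cdot\|_\infty$ norm; since $\mathcal{D}$ fits in an $\ell_\infty$-cube of side $\mathrm{diam}(\mathcal{D})$, one can take $M \leq (2\,\mathrm{diam}(\mathcal{D})/\epsilon)^p$. For every $\zeta \in \mathcal{D}$ there is a net point within $\ell_\infty$-distance $\epsilon$, so Lipschitzness gives
\begin{equation*}
\sup_{\zeta \in \mathcal{D}} |D_n(\zeta)| \leq \max_{1 \leq m \leq M} |D_n(\zeta_m)| + 2 l_{\mathcal{D}}\sqrt{p}\,\epsilon.
\end{equation*}
Taking $\epsilon = 1/n$ makes the discretization error of order $1/n$ while keeping $M \leq (2 n\,\mathrm{diam}(\mathcal{D}))^p$, so that $\log M$ is of order $p\log(2n\,\mathrm{diam}(\mathcal{D}))$, which is the source of the $\log(4n)$ factor in the statement.

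To control the finite maximum, I would apply Lemma \ref{azuma} to each fixed net point: since $|\phi_i(\zeta_m)| \leq l_{\mathcal{D}}$ we get $\mathbb{P}(|D_n(\zeta_m)| > t) \leq 2\exp(-n t^2/(2 l_{\mathcal{D}}^2))$, and a union bound yields $\mathbb{P}(\max_m |D_n(\zeta_m)| > t) \leq 2M \exp(-n t^2/(2 l_{\mathcal{D}}^2))$. I then evaluate the fourth moment through the tail integral $\mathbb{E}[\max_m |D_n(\zeta_m)|^4] = \int_0^\infty 4 t^3 \,\mathbb{P}(\max_m |D_n(\zeta_m)| > t)\,dt$, splitting the range at a threshold $t_0 \asymp \sqrt{2 l_{\mathcal{D}}^2 \log(2M)/n}$: below $t_0$ I bound the probability by $1$, and above $t_0$ the exponential tail makes the integral converge, producing a bound of order $l_{\mathcal{D}}^4(\log(2M)/n)^2$. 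Finally I combine the two contributions with the elementary inequality \eqref{elementary} in the form $|a+b|^4 \leq 8(a^4 + b^4)$: the maximum term contributes the leading $2^{4-2.5p}(l_{\mathcal{D}})^4(2\,\mathrm{diam}(\mathcal{D}))^p(\log(4n)/n)^2$ piece, while the discretization error contributes $(2 l_{\mathcal{D}}\sqrt{p}/n)^4 = 16 p^2 l_{\mathcal{D}}^4/n^4 \leq 16 p^2 l_{\mathcal{D}}^4 (\log(4n)/n)^2$, matching the two summands of $C_{\mathcal{D}}^{\prime\prime}$.

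The main obstacle is the bookkeeping that forces this scheme to land on the exact constant $C_{\mathcal{D}}^{\prime\prime}$ and the precise $\log(4n)$ factor: one must coordinate the net resolution $\epsilon$, the net cardinality $M$, and the splitting threshold $t_0$ so that the tail integral collapses cleanly, rather than merely proving the correct rate up to absolute constants. A secondary subtlety concerns \eqref{z4}: Assumption 3 only asserts that each $g_w$ is twice continuously differentiable, so the Lipschitz-in-$\zeta$ property of the second-derivative components $[Z_i(\zeta)]_{kij}$ that the net step requires must be justified from the boundedness hypotheses collected in Definition \ref{ld} (or from uniform continuity on the compact set $\mathcal{D}$), which is the one place where the argument for $Z$ is not a verbatim copy of the argument for $S$ and $T$.
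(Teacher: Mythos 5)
Your proposal is correct and takes essentially the same route as the paper: an $\ell_\infty$-net of $\mathcal{D}$ of cardinality at most $(2\,\mathrm{diam}(\mathcal{D})/\epsilon)^p$, the Lipschitz bounds packaged in Definition \ref{ld} (which the paper also invokes, exactly as you suggest, to cover the second-derivative components in \eqref{z4}), Lemma \ref{azuma} with a union bound over the net, and a net resolution and threshold of order $\epsilon=1/\sqrt{2n}$, $\delta\asymp\sqrt{p\log(2n+2)/n}$ --- the only difference being that the paper converts the tail bound into a fourth-moment bound via the one-line estimate $\mathbb{E}[X^4]\leq(2l_{\mathcal{D}})^4\,\mathbb{P}(X\geq\delta)+\delta^4$, keeping the discretization error inside the probability, rather than via your layer-cake integral and the $|a+b|^4\leq 8(a^4+b^4)$ split. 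The exact-constant bookkeeping you flag as the main obstacle is not a genuine gap: the paper's own computation lands on $4\cdot 12^2\,p^2 l^4_{\mathcal{D}}$ rather than the stated $16p^2l^4_{\mathcal{D}}$ (and even writes an $n$-dependent quantity into its in-proof definition of $C_{\mathcal{D}}^{\prime\prime}$), and every later use of the lemma (Theorem \ref{th3}) requires only that $C_{\mathcal{D}}^{\prime\prime}$ be some finite constant depending on $\mathcal{D}$.
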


\begin{proof}
	Fix $1 \leq k \leq d$ and $1 \leq j \leq p$, we prove equation \eqref{s4}. It follows from Definition \ref{ld} that for any $w \in \Omega_w$,
	\begin{equation}\label{28}
	\sup\limits_{\substack{x,\tilde{x} \in \mathcal{D}, \\ 1 \leq j \leq p \\ 1 \leq k \leq d}} \{\lvert \frac{\partial [g_{w}]_k}{\partial[x]_j}(x)-\frac{\partial [g_{w}]_k}{\partial[x]_j}(\tilde{x})\rvert \} \leq l_{\mathcal{D}} \|{x-\tilde{x}}\|_2 \leq l_{\mathcal{D}}\cdot \sqrt{p}\|{x-\tilde{x}}\|_{\infty} .
	\end{equation} 
	It also follows from Definition \ref{ld} that $diam(\mathcal{D}) < \infty$. Consequently, we can find a set $\Gamma \subset \mathbb{R}^p$ with $ |\Gamma| \leq {\big(\frac{2\cdot diam(\mathcal{D})}{\epsilon}\big)^p}$ such that for any $\zeta \in \mathcal{D}$, there exists $ \gamma \in \Gamma$ with $\|\zeta-\gamma\|_{\infty} \leq \epsilon$. 
	 It then follows from \eqref{28} that: 
	 \begin{align*}
	 |[\bar{S}_{n}]_{kj}(\zeta)-[\mathbb{E}S(\zeta)]_{kj} |=&|\frac{1}{n}\sum_{i=1}^n\frac{\partial [g_{w_i}]_k}{\partial[x]_j}(\zeta)-\mathbb{E}_{w}\frac{\partial [g_{w}]_k}{\partial[x]_j}(\zeta)| \nonumber\\
	  \leq& |\frac{1}{n}\sum_{i=1}^n\frac{\partial [g_{w_i}]_k}{\partial[x]_j}(\gamma)-\mathbb{E}_{w}\frac{\partial [g_{w}]_k}{\partial[x]_j}(\gamma)| +2\epsilon\sqrt{p}l_{\mathcal{D}}=|[\bar{S}_{n}]_{kj}(\gamma)-[\mathbb{E}S(\gamma)]_{kj} |+2\epsilon\sqrt{p}l_{\mathcal{D}}.
	 \end{align*}  
	 Thus, fix $\delta>0$ and $0<\epsilon<\min\{diam(\mathcal{D}),\frac{\delta}{2\sqrt{p}l_{\mathcal{D}}}\}$, we can denote the elements in $\Gamma$ by $\{\gamma_1,...,\gamma_{\lvert\Gamma\rvert}\}$ and write:
	\begin{align}\label{ah1p}
	\mathbb{P}\bigg\{\sup_{\zeta \in \mathcal{D}}|[\bar{S}_{n}]_{kj}(\zeta)-[\mathbb{E}S(\zeta)]_{kj} | \geq \delta \bigg\} &\leq \mathbb{P}\bigg\{\max
	_{1 \leq i \leq \lvert \Gamma \rvert}|[\bar{S}_{n}]_{kj}(\gamma_i)-[\mathbb{E}S(\gamma_i)]_{kj} | +2\epsilon \sqrt{p} l_{\mathcal{D}} \geq \delta \bigg\} \nonumber \\
	& \leq \sum_{i=1}^{\lvert \Gamma \rvert} \mathbb{P}\bigg\{|[\bar{S}_{n}]_{kj}(\gamma_i)-[\mathbb{E}S(\gamma_i)]_{kj} | \geq \delta-2\epsilon \sqrt{p} l_{\mathcal{D}}\bigg\} \nonumber \\
	& \leq \sum_{i=1}^{\lvert \Gamma \rvert} 2\cdot exp \{ -\frac{n(\delta-2\epsilon \sqrt{p} l_{\mathcal{D}})^2}{2l^2_{\mathcal{D}}} \} \leq \big(\frac{2\cdot diam(\mathcal{D})}{\epsilon}\big)^p \cdot exp{\{-\frac{n(\delta-2\epsilon \sqrt{p} l_{\mathcal{D}})^2}{2l^2_{\mathcal{D}}} \}},
	\end{align}
	where the third line follows from Lemma \ref{azuma}, the Azuma-Hoeffding inequality. Now we prove \eqref{s4}:
	\begin{align}\label{cpp}
	\mathbb{E}\bigg[\sup_{\zeta \in \mathcal{D}}|[\bar{S}_{n}]_{kj}(\zeta)-[\mathbb{E}S(\zeta)]_{kj} | ^4\bigg] &\leq (2l_{\mathcal{D}})^4 \mathbb{P}\{\sup_{\zeta \in \mathcal{D}}|[\bar{S}_{n}]_{kj}(\zeta)-[\mathbb{E}S(\zeta)]_{kj} | \geq \delta \} + \delta^4 \mathbb{P} \{\sup_{\zeta \in \mathcal{D}}|[\bar{S}_{n}]_{kj}(\zeta)-[\mathbb{E}S(\zeta)]_{kj} | < \delta \} \nonumber \\
	& \leq (2l_{\mathcal{D}})^4\big(\frac{2\cdot diam(\mathcal{D})}{\epsilon}\big)^p\cdot exp{\{-\frac{n(\delta-2\epsilon \sqrt{p} l_{\mathcal{D}})^2}{2l^2_{\mathcal{D}}} \}} + \delta^4  \nonumber \\
	&=(2l_{\mathcal{D}})^4(2\cdot diam(\mathcal{D}))^p\cdot exp{\{-\frac{n(\delta-2\epsilon \sqrt{p} l_{\mathcal{D}})^2}{2l^2_{\mathcal{D}}} +p \cdot log(\frac{1}{\epsilon})\}} + \delta^4,
	\end{align}
	where the second line follows from \eqref{ah1p}. Now take $\delta=\frac{2\sqrt{p}l_{\mathcal{D}}\cdot\sqrt{12log(2n+2)}}{\sqrt{2n}}$ and $\epsilon=\frac{1}{\sqrt{2n}}$. Without loss of generality, we assume $diam(\mathcal{D})>1$ so that $0<\epsilon<\min\{diam(\mathcal{D}),\frac{\delta}{2\sqrt{p}l_{\mathcal{D}}}\}$. Then \eqref{cpp} becomes:
	\begin{align}
	&\mathbb{E}\bigg[\sup_{\zeta \in \mathcal{D}}|[\bar{S}_{n}]_{kj}(\zeta)-[\mathbb{E}S(\zeta)]_{kj} |^4\bigg] \nonumber\\
	\leq& (2l_{\mathcal{D}})^4(2\cdot diam(\mathcal{D}))^p\cdot exp{\{ -p(\sqrt{12log(2n+2)}-1)^2+ p\cdot log(\sqrt{2n}) \  \}} +4\cdot12^2p^2l^4_{\mathcal{D}}(\frac{log(2n+2)}{n})^2 \nonumber \\
	\leq& (2l_{\mathcal{D}})^4(2\cdot diam(\mathcal{D}))^p\cdot exp{\{ -\frac{p}{4}({12log(2n+2)})+ p\cdot log(\sqrt{2n}) \  \}} +4\cdot12^2p^2l^4_{\mathcal{D}}(\frac{log(2n+2)}{n})^2 \nonumber \\
	\leq&(2)^{4-2.5p}(l_{\mathcal{D}})^4(2\cdot diam(\mathcal{D}))^p\cdot \frac{1}{(2n+2)^{2p}} +4\cdot12^2p^2l^4_{\mathcal{D}}(\frac{log(2n+2)}{n})^2\nonumber\\
	 \leq& C_{\mathcal{D}}^{\prime\prime}\big( \frac{log (2n+2)}{n}\big)^2 \leq C_{\mathcal{D}}^{\prime\prime}\big( \frac{log (4n)}{n}\big)^2
	\end{align}
	where $C_{\mathcal{D}}^{\prime\prime} \triangleq (2)^{4-2.5p}(l_{\mathcal{D}})^4(2\cdot diam(\mathcal{D}))^p+4\cdot12^2p^2l^4_{\mathcal{D}}(\frac{log(2n+2)}{n})^2$. The second inequality follows from that $(x-1)^2 \geq \frac{x^2}{4}$ when $x \geq 2$. The third inequality follows from $-3p\cdot log(2n+2)+\frac{p}{2}\cdot log(2n) \leq -2p\cdot log(2n+2)$. The last two inequalities follows from $p \geq 1 $ and $n \geq 1$.
	
	Finally, to prove \eqref{t4} and  \eqref{z4}, we notice that $g_w(x)$ and its second order derivatives are also $l_{\mathcal{D}}$-Lipschitz continous for all $w \in \Omega_w$ according to Assumption \ref{lp} and Definition \ref{ld}. Thus \eqref{28} becomes:
	\begin{align*}&\sup\limits_{\substack{x,\tilde{x} \in \mathcal{D}, \\ 1 \leq k \leq d}} \{\lvert {\partial [g_{w}]_k}(x)-{\partial [g_{w}]_k}(\tilde{x})\rvert \} \leq l_{\mathcal{D}}\cdot \sqrt{p}\|{x-\tilde{x}}\|_{\infty}\nonumber\\
	&\sup\limits_{\substack{x,\tilde{x} \in \mathcal{D}, \\ 1 \leq i,j \leq p \\ 1 \leq k \leq d}} \{\lvert \frac{\partial [g_{w}]_k}{\partial[x]_i\partial[x]_j}(x)-\frac{\partial [g_{w}]_k}{\partial[x]_i\partial[x]_j}(\tilde{x})\rvert \} \leq l_{\mathcal{D}}\cdot \sqrt{p}\|{x-\tilde{x}}\|_{\infty} \nonumber\\,
	\end{align*}
	for equation \eqref{t4} and \eqref{z4}, respectively. The rest of the proof follows similarly.
\end{proof}
Now we proceed with the main theorem of this section.
\begin{Theorem}\label{th3}
There exist a constant $C_{\mathcal{D}}< \infty$ such that for any $v_1 \in \Omega_v$ and $x,\tilde{x} \in \mathcal{D}$, the $W(x,v_1)$ and $W(\tilde{x},v_1)$ from the variance reduced unbiased gradient  $W=W(x,v_1)-W(\tilde{x},v_1)+g(\tilde{x})$ in Algorithm 3 satisfies:
\begin{equation}
\mathbb{E}\|W(x,v_1)-W(\tilde{x},v_1)\|_2^2 \leq C_{\mathcal{D}}\|x-\tilde{x}\|_2^2
\end{equation}
\end{Theorem}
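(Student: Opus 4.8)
The plan is to follow the structure of the proof of Proposition \ref{finitev}, but to carry the factor $\|x-\tilde{x}\|_2$ through every estimate. Writing the level-$n$ increment as
\[
D_n(\zeta)\triangleq[\bar{S}_{2^{n+n_0+1}}(\zeta)]^\intercal\nabla f_{v_1}(\bar{T}_{2^{n+n_0+1}}(\zeta))-\tfrac{1}{2}\Big([\bar{S}_{2^{n+n_0}}(\zeta)]^\intercal\nabla f_{v_1}(\bar{T}_{2^{n+n_0}}(\zeta))+[\tilde{S}_{2^{n+n_0}}(\zeta)]^\intercal\nabla f_{v_1}(\tilde{T}_{2^{n+n_0}}(\zeta))\Big),
\]
Algorithm 3 gives $W(x,v_1)-W(\tilde{x},v_1)=\tilde{p}_N^{-1}\big(D_N(x)-D_N(\tilde{x})\big)+\big(Y_4(x)-Y_4(\tilde{x})\big)$, where, crucially, $x$ and $\tilde{x}$ are driven by the \emph{same} geometric $N$ and the \emph{same} samples $\{w_i\}$. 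Conditioning on $N$ and applying \eqref{elementary} exactly as in \eqref{variance}, it suffices to prove $\mathbb{E}\|Y_4(x)-Y_4(\tilde{x})\|_2^2\le C_1\|x-\tilde{x}\|_2^2$ together with $\mathbb{E}\|D_n(x)-D_n(\tilde{x})\|_2^2\le C_2\,4^{-n}(\log(4\cdot2^{n+n_0}))^2\,\|x-\tilde{x}\|_2^2$. These two bounds give the claim with $C_{\mathcal{D}}=2C_1+2C_2\sum_{n\ge0}\tilde{p}_n^{-1}4^{-n}(\log(4\cdot2^{n+n_0}))^2$, the series being finite because $\tilde{p}_n\propto0.5^{\gamma n}$ makes the summand of order $2^{(\gamma-2)n}n^2$, which is summable precisely because $\gamma<2$.

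The first bound is immediate. With $G(a,b)=a^\intercal\nabla f_{v_1}(b)$ as in the proof of Proposition \ref{finitev} we have $Y_4(\zeta)=G(\bar{S}_{2^{n_0}}(\zeta),\bar{T}_{2^{n_0}}(\zeta))$; since $G$ has bounded gradient on $\mathcal{H}\times\mathcal{G}$ (Definition \ref{ld}) and each $S_i=\nabla g_{w_i}$, $T_i=g_{w_i}$ is $l_{\mathcal{D}}$-Lipschitz, the composition $Y_4$ is deterministically Lipschitz in $\zeta$, and squaring yields $\mathbb{E}\|Y_4(x)-Y_4(\tilde{x})\|_2^2\le C_1\|x-\tilde{x}\|_2^2$. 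For the second, and main, bound I would exploit smoothness in $\zeta$ rather than re-expand the remainder terms. Assuming $\mathcal{D}$ convex (else replace it by its convex hull, still compact with $l_{\mathcal{D}}<\infty$), the mean value inequality gives $\|D_n(x)-D_n(\tilde{x})\|_2\le\|x-\tilde{x}\|_2\sup_{\zeta\in\mathcal{D}}\|\nabla_\zeta D_n(\zeta)\|_2$, so it remains to show $\mathbb{E}\big[\sup_{\zeta\in\mathcal{D}}\|\nabla_\zeta D_n(\zeta)\|_2^2\big]\le C_2\,4^{-n}(\log(4\cdot2^{n+n_0}))^2$.

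The key structural observation is that differentiation in $\zeta$ preserves the multilevel ``fine minus average of two coarse'' cancellation that makes $D_n$ second order. Using $\nabla_\zeta\bar{T}_m(\zeta)=\bar{S}_m(\zeta)$ and $\nabla_\zeta\bar{S}_m(\zeta)=\bar{Z}_m(\zeta)$ together with $\bar{S}_{2m}=\tfrac12(\bar{S}_m+\tilde{S}_m)$ and the analogues in \eqref{SandT}, the chain rule expresses $\nabla_\zeta D_n(\zeta)$ as the same fine-minus-coarse difference of smooth functionals of the empirical averages; a Taylor expansion about $(\mathbb{E}S(\zeta),\mathbb{E}T(\zeta),\mathbb{E}Z(\zeta))$, exactly in the spirit of \eqref{reminder}, shows the zeroth- and first-order terms cancel, leaving a quadratic expression in the centered fluctuations $\bar{S}_m-\mathbb{E}S$, $\bar{T}_m-\mathbb{E}T$, $\bar{Z}_m-\mathbb{E}Z$ at the two levels $m\in\{2^{n+n_0},2^{n+n_0+1}\}$. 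Taking the supremum over $\zeta$ inside and invoking the uniform fourth-moment bounds \eqref{s4}, \eqref{t4}, \eqref{z4} of Lemma \ref{finiteclt}, Cauchy--Schwarz and \eqref{elementary} bound $\mathbb{E}[\sup_\zeta\|\nabla_\zeta D_n(\zeta)\|_2^2]$ by $C\,4^{-(n+n_0)}(\log(4\cdot2^{n+n_0}))^2$, as required. The main obstacle is precisely this last step: one must differentiate the composite estimator, verify that the multilevel cancellation survives so that $\nabla_\zeta D_n$ remains genuinely second order (not first order) in the fluctuations, and then control the resulting quadratic form \emph{uniformly} in $\zeta$. This uniformity is exactly why Lemma \ref{finiteclt} is stated with a supremum over $\mathcal{D}$, and the appearance of the Hessian average $\bar{Z}_n=\nabla^2g_w$ in $\nabla_\zeta D_n$ is what forces the bound \eqref{z4} into the argument; tracking the Lipschitz and Hessian constants of $G$ (using the twice-differentiability and Lipschitz second-derivative hypotheses of Assumptions 2--3) is then routine bookkeeping.
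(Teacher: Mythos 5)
Your proposal is correct and follows essentially the same route as the paper's proof: both reduce the problem, via a mean-value argument in $\zeta$, to bounding the gradient $\nabla_\zeta$ of the multilevel increment, both observe that differentiation preserves the fine-minus-coarse cancellation (bringing in $\bar{Z}_n$ and hence \eqref{z4}), both Taylor-expand about $(\mathbb{E}S,\mathbb{E}T,\mathbb{E}Z)$ so only quadratic remainder terms survive, both invoke the uniform fourth-moment bounds of Lemma \ref{finiteclt} to handle the data-dependent evaluation point, and both sum the resulting series against $\tilde{p}_n^{-1}$ using $\gamma<2$. The only cosmetic difference is that you take a supremum of the gradient norm over $\mathcal{D}$ up front (and explicitly note the convexity of $\mathcal{D}$ needed for the mean value step, which the paper leaves implicit), whereas the paper applies the mean value theorem componentwise to obtain random intermediate points $\xi_i$ and then uses the same uniform lemma.
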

\begin{proof}
Fix $v_1\in \Omega_v$ and $x,\tilde{x} \in \mathcal{D}$, we have 
\begin{align*}
W(x,v_1)-W(\tilde{x},v_1)=\frac{1}{\tilde{p}_N}\bigg(Y_1(x)-Y_1(\tilde{x})-0.5\cdot\Big(Y_2(x)-Y_2(\tilde{x})+Y_3(x)-Y_3(\tilde{x})\Big)\bigg)+Y_4(x)-Y_4(\tilde{x})
\end{align*}
and it follows that 
\begin{align}\label{lpbound}
&\mathbb{E}\|W(x,v_1)-W(\tilde{x},v_1)\|_2^2 \nonumber\\
=&\sum_{n=0}^{\infty} \mathbb{E}[\|W(x,v_1)-W(\tilde{x},v_1)\|_2^2|N=n]\cdot \tilde{p}_n  \nonumber \\
=&\sum_{n=0}^{\infty}\sum_{i=1}^p \mathbb{E}[([W(x,v_1)]_i-[W(\tilde{x},v_1)]_i)^2|N=n]\cdot \tilde{p}_n   \nonumber \\
\leq &\sum_{i = 1}^p2\mathbb{E}([Y_4(x)]_i-[Y_4(\tilde{x})]_i)^2+\sum_{n=0}^{\infty} \sum_{i = 1}^p \frac{2}{\tilde{p}_n}\mathbb{E}[\bigg([Y_1(x)]_i-[Y_1(\tilde{x})]_i-0.5\cdot\Big([Y_2(x)]_i-[Y_2(\tilde{x})]_i+[Y_3(x)]_i-[Y_3(\tilde{x})]_i\Big)\bigg)^2|N=n] \nonumber\\
=&\sum_{i = 1}^p2\mathbb{E}(\nabla [Y_4(\rho_i)]_i^\intercal (x-\tilde{x}))^2+\sum_{n=0}^{\infty} \sum_{i = 1}^p \frac{2}{\tilde{p}_n} \cdot\mathbb{E} [\bigg(\nabla \Big([Y_1(\xi_i)]_i-0.5\cdot([Y_2(\xi_i)]_i+[Y_3(\xi_i)]_i)\Big)^\intercal(x-\tilde{x})\bigg)^2|N=n]\nonumber\\
=& \sum_{i = 1}^p2\|x-\tilde{x}\|_2^2\cdot\mathbb{E}\|\nabla [Y_4(\rho_i)]_i \|_2^2+ \sum_{n=0}^{\infty} \sum_{i = 1}^p \frac{2\|x-\tilde{x}\|_2^2}{\tilde{p}_n} \cdot \mathbb{E}[\|\nabla \Big([Y_1(\xi_i)]_i-0.5\cdot([Y_2(\xi_i)]_i+[Y_3(\xi_i)]_i)\Big)\|_2^2\quad|N=n]
\end{align}
The last two lines follows from Mean Value Theorem where $\xi_i,\rho_i, 1 \leq i \leq p$ lie somewhere between $x$ and $\tilde{x}$. To proceed with equation \eqref{lpbound}, fix $N=n$ where $n \geq 0$, notice we can write
 $Y_1(\xi_i)=[\bar{S}_{2^{n+n_0+1}}(\xi_i)]^\intercal \cdot \nabla f_{v_1}(\bar{T}_{2^{n+n_0+1}}(\xi_i))$. Thus we have
\begin{align*} [Y_1(\xi_i)]_i=&\sum\limits_{k=1}^d[\bar{S}_{2^{n+n_0+1}}(\xi_i)]_{ki}\cdot[\nabla f_{v_1}(\bar{T}_{2^{n+n_0+1}}(\xi_i))]_k\nonumber\\
=&\sum\limits_{k=1}^d(\frac{1}{2^{n+n_0+1}}\cdot\sum_{t=1}^{2^{n+n_0+1}} \frac{\partial[g_{w_t}]_k}{\partial [x]_i}(\xi_i)) \cdot \frac{\partial f_{v_1}}{\partial [y]_k} (\frac{1}{2^{n+n_0+1}}\cdot \sum_{t=1}^{2^{n+n_0+1}}g_{w_t}(\xi_i)),
\end{align*}
and it follows from the chain rule that, for $ 1 \leq j \leq p$,
\begin{align}\label{lpgradient}
&[\nabla [Y_1(\xi_i)]_i]_j \nonumber\\
=&\sum_{k=1}^d\bigg((\frac{1}{2^{n+n_0+1}}\cdot\sum_{t=1}^{2^{n+n_0+1}} \frac{\partial[g_{w_t}]_k}{\partial [x]_i\cdot \partial[x]_j}(\xi_i)) \cdot \frac{\partial f_{v_1}}{\partial [y]_k} (\frac{1}{2^{n+n_0+1}}\cdot \sum_{t=1}^{2^{n+n_0+1}}g_{w_t}(\xi_i)) \nonumber\\
&+(\frac{1}{2^{n+n_0+1}}\cdot\sum_{t=1}^{2^{n+n_0+1}} \frac{\partial[g_{w_t}]_k}{\partial [x]_i}(\xi_i)) \cdot \Big(\sum_{h=1}^d \frac{\partial f_{v_1}}{\partial [y]_k\cdot \partial[y]_h} (\frac{1}{2^{n+n_0+1}}\cdot \sum_{t=1}^{2^{n+n_0+1}}g_{w_t}(\xi_i))\cdot(\frac{1}{2^{n+n_0+1}}\cdot\sum_{t=1}^{2^{n+n_0+1}} \frac{\partial[g_{w_t}]_h}{\partial [x]_j}(\xi_i)) \Big)\bigg)\nonumber\\
=&\sum_{k=1}^d\bigg([\bar{Z}_{2^{n+n_0+1}}(\xi_i)]_{kij}\cdot\frac{\partial f_{v_1}}{\partial [y]_k}(\bar{T}_{2^{n+n_0+1}}(\xi_i))+[\bar{S}_{2^{n+n_0+1}}(\xi_i)]_{ki}\cdot\Big(\sum_{h=1}^d\frac{\partial f_{v_1}}{\partial [y]_k\cdot \partial[y]_h}(\bar{T}_{2^{n+n_0+1}}(\xi_i))\cdot [\bar{S}_{2^{n+n_0+1}}(\xi_i)]_{hj}  \Big)\bigg),
\end{align}
where the last line follows from  Definition \ref{sandt}.It follows from the definition of $l_{\mathcal{D}}$ that for any $\xi_i \in \mathcal{D}$ and $N=n$, $|[\nabla[Y_1(\xi_i)]_i]_j|\leq dl^2_{\mathcal{D}}(1+dl_{\mathcal{D}})$ and $\|\nabla[Y_1(\xi_i)]_i\|^2_2 \leq pd^2l^4_{\mathcal{D}}(1+dl_{\mathcal{D}})^2$. Following a similar analysis, we also have $\|\nabla[Y_4(\rho_i)]_i\|^2_2 \leq pd^2l^4_{\mathcal{D}}(1+dl_{\mathcal{D}})^2$, so the first term of \eqref{lpbound} satisfies:
\begin{equation}\label{firstterm} 2\|x-\tilde{x}\|_2^2\cdot\mathbb{E}\|\nabla [Y_4(\rho_i)]_i \|_2^2 \leq 2pd^2l^4_{\mathcal{D}}(1+dl_{\mathcal{D}})^2\cdot\|x-\tilde{x}\|_2^2.
\end{equation}

To bound the second term in \eqref{lpbound}, we define the following function:
for $x \in \mathcal{H}\subseteq \mathbb{R}^{d \times p },y \in \mathcal{G}\subseteq \mathbb{R}^{d }, z \in \mathcal{J}\subseteq \mathbb{R}^{d \times p \times p}$ and each $1 \leq i,j 
\leq p$, define $G:\mathcal{H}\times\mathcal{G}\times\mathcal{J} \rightarrow \mathbb{R}$ by:
\begin{equation}
G_{ij}(x,y,z)\triangleq\sum_{k=1}^d\bigg([z]_{kij}\cdot\frac{\partial f_{v_1}}{\partial [y]_k}(y)+[x]_{ki}\cdot\Big(\sum_{h=1}^d\frac{\partial f_{v_1}}{\partial [y]_k\cdot \partial[y]_h}(y)\cdot [x]_{hj}  \Big)\bigg),
\end{equation}
It is straightforward to see that for any realization of $N$, $[\nabla[Y_1(x)]_i]_j=G_{ij}(\bar{S}_{2^{N+n_0+1}}(x),\bar{T}_{2^{N+n_0+1}}(x),\bar{Z}_{2^{N+n_0+1}}(x))$. We can compute each component of the gradient $\nabla G(x,y,z) \in \mathbb{R}^{(d\times p)\times d \times (d\times p \times p)}$ as
\begin{align}
\frac{\partial G_{ij}}{\partial[x]_{k^{\prime}j^{\prime}}}=& \delta_{ij^{\prime}} \cdot \sum_{h=1}^d\frac{\partial f_{v_1}}{\partial [y]_{k^{\prime}}\cdot \partial[y]_h}(y)\cdot [x]_{hj} +\delta_{jj^{\prime}} \cdot \sum_{k=1}^d  \frac{\partial f_{v_1}}{\partial [y]_k\cdot \partial[y]_{k^{\prime}}}(y) \cdot [x]_{ki} \nonumber\\
\frac{\partial G_{ij}}{\partial[y]_{h^{\prime}}}=& \sum_{k=1}^d\bigg([z]_{kij}\cdot\frac{\partial f_{v_1}}{\partial [y]_k\partial[y]_{h^{\prime}}}(y)+[x]_{ki}\cdot\Big(\sum_{h=1}^d\frac{\partial f_{v_1}}{\partial [y]_k \partial[y]_h \partial[y]_{h^{\prime}}}(y)\cdot [x]_{hj}  \Big)\bigg) \nonumber\\
\frac{\partial G_{ij}}{\partial[z]_{k^{\prime}i^{\prime}j^{\prime}}}=& \delta_{ii^{\prime}}\cdot \delta_{jj^{\prime}}\cdot \frac{\partial f_{v_1}}{\partial[y]_{k^{\prime}}}(y)
\end{align}
where $1 \leq i^{\prime}, j^{\prime} \leq p$,$1 \leq k^{\prime}, h^{\prime} \leq d$ and $\delta_{ij}$ is the Kronecker delta. It follows from Assumptions 1, Assumption 3 and Lemma \eqref{lipschitz} that for any $1 \leq i,j \leq p$, each component of the $\nabla G_{ij}(x,y,z)$ is Lipschitz continous with constant $2dl^2_{\mathcal{D}}(1+dl_{\mathcal{D}})$ and $\nabla G_{ij}(x,y,z)$ is Lipschitz continous with constant $2\sqrt{dp^2+dp+d}\cdot dl^2_{\mathcal{D}}(1+dl_{\mathcal{D}})$. Thus if we define  $R_{ij}\begin{pmatrix*} x,x_0\\y,y_0\\z,z_0
\end{pmatrix*}\triangleq G_{ij}(x,y,z)-G_{ij}(x_0,y_0,z_0)-\nabla G_{ij}(x_0,y_0,z_0)\cdot\begin{pmatrix*} x-x_0\\y-y_0\\z-z_0
\end{pmatrix*}$ for $1 \leq i,j\leq p$, using Lemma \ref{secondorder},
\begin{align}\label{reminder2}
&|R_{ij}\begin{pmatrix*} x,x_0\\y,y_0\\z,z_0
\end{pmatrix*}| 
=|G(x,y,z)-G(x_0,y_0,z_0)-\nabla G(x_0,y_0,z_0) \cdot  \begin{pmatrix*} x-x_0\\y-y_0\\z-z_0
\end{pmatrix*}|^2 \nonumber\\
\leq & \sqrt{dp^2+dp+d}\cdot dl^2_{\mathcal{D}}(1+dl_{\mathcal{D}}) (\sum_{k^{\prime}=1}^d\sum_{j^{\prime}=1}^p ([x]_{k^{\prime}j^{\prime}}-[x_0]_{k^{\prime}j^{\prime}})^2+\sum_{h^{\prime}=1}^d ([y]_{h^{\prime}}-[y_0]_{h^{\prime}})^2+\sum_{k^{\prime}=1}^d\sum_{i^{\prime},j^{\prime}=1}^p ([z]_{k^{\prime}i^{\prime}j^{\prime}}-[z_0]_{k^{\prime}i^{\prime}j^{\prime}})^2) \nonumber\\
\leq & 4p d^3l^3_{\mathcal{D}} (\sum_{k^{\prime}=1}^d\sum_{j^{\prime}=1}^p ([x]_{k^{\prime}j^{\prime}}-[x_0]_{k^{\prime}j^{\prime}})^2+\sum_{h^{\prime}=1}^d ([y]_{h^{\prime}}-[y_0]_{h^{\prime}})^2+\sum_{k^{\prime}=1}^d\sum_{i^{\prime},j^{\prime}=1}^p ([z]_{k^{\prime}i^{\prime}j^{\prime}}-[z_0]_{k^{\prime}i^{\prime}j^{\prime}})^2)
\end{align}
for any $x,x_0 \in \mathcal{H}$,$y,y_0 \in \mathcal{G}$ and $z,z_0 \in \mathcal{J}$. To bound the second term in \eqref{lpbound}, for any $n \geq 0$ and $1 \leq i\leq p$,
\begin{align}\label{almostdone}
& \mathbb{E}[\|\nabla \Big([Y_1(\xi_i)]_i-0.5\cdot([Y_2(\xi_i)]_i+[Y_3(\xi_i)]_i)\Big)\|_2^2|N=n] \nonumber \\
=&\sum_{j=1}^p\mathbb{E}[\big([\nabla[Y_1(\xi_i)]_i]_j-0.5\cdot ([\nabla[Y_2(\xi_i)]_i]_j+[\nabla[Y_3(\xi_i)]_i]_j)\big)^2|N=n] \nonumber\\
=&\sum_{j=1}^p\mathbb{E} \Big[\Big(G_{ij}(\bar{S}_{2^{N+n_0+1}}(\xi_i),\bar{T}_{2^{N+n_0+1}}(\xi_i),\bar{Z}_{2^{N+n_0+1}}(\xi_i))\nonumber\\
&-0.5\cdot G_{ij}(\bar{S}_{2^{N+n_0}}(\xi_i),\bar{T}_{2^{N+n_0}}(\xi_i),\bar{Z}_{2^{N+n_0}}(\xi_i))-0.5\cdot G_{ij}(\tilde{S}_{2^{N+n_0}}(\xi_i),\tilde{T}_{2^{N+n_0}}(\xi_i),\tilde{Z}_{2^{N+n_0}}(\xi_i))\Big)^2|N=n\Big] \nonumber\\ 
=& \sum_{j=1}^p\mathbb{E}\Big[\Big(G_{ij}(\mathbb{E}S(\xi_i),\mathbb{E}T(\xi_i),\mathbb{E}Z(\xi_i))+\nabla G_{ij}(\mathbb{E}S(\xi_i),\mathbb{E}T(\xi_i),\mathbb{E}Z(\xi_i))\cdot\begin{pmatrix*}
\bar{S}_{2^{N+n_0+1}}(\xi_i)-\mathbb{E}S(\xi_i)\\\bar{T}_{2^{N+n_0+1}}(\xi_i)-\mathbb{E}T(\xi_i)\\\bar{Z}_{2^{N+n_0+1}}(\xi_i)-\mathbb{E}Z(\xi_i)
\end{pmatrix*}\nonumber\\
&-G_{ij}(\mathbb{E}S(\xi_i),\mathbb{E}T(\xi_i),\mathbb{E}Z(\xi_i))-\nabla G_{ij}(\mathbb{E}S(\xi_i),\mathbb{E}T(\xi_i),\mathbb{E}Z(\xi_i))\cdot\begin{pmatrix*}
\frac{\bar{S}_{2^{N+n_0}}(\xi_i)+\tilde{S}_{2^{N+n_0}}(\xi_i)}{2}-\mathbb{E}S(\xi_i)\\\frac{\bar{T}_{2^{N+n_0}}(\xi_i)+\tilde{T}_{2^{N+n_0}}(\xi_i)}{2}-\mathbb{E}T(\xi_i)\\\frac{\bar{Z}_{2^{N+n_0}}(\xi_i)+\tilde{Z}_{2^{N+n_0}}(\xi_i)}{2}-\mathbb{E}Z(\xi_i)
\end{pmatrix*}\nonumber\\
&+R_{ij}\begin{pmatrix*}
\bar{S}_{2^{N+n_0+1}}(\xi_i),\mathbb{E}S(\xi_i)\\\bar{T}_{2^{N+n_0+1}}(\xi_i),\mathbb{E}T(\xi_i)\\\bar{Z}_{2^{N+n_0+1}}(\xi_i),\mathbb{E}Z(\xi_i)
\end{pmatrix*} -\frac{1}{2}R_{ij}\begin{pmatrix*}
\bar{S}_{2^{N+n_0}}(\xi_i),\mathbb{E}S(\xi_i)\\\bar{T}_{2^{N+n_0}}(\xi_i),\mathbb{E}T(\xi_i)\\\bar{Z}_{2^{N+n_0}}(\xi_i),\mathbb{E}Z(\xi_i)
\end{pmatrix*}-\frac{1}{2}R_{ij}\begin{pmatrix*}
\tilde{S}_{2^{N+n_0}}(\xi_i),\mathbb{E}S(\xi_i)\\\tilde{T}_{2^{N+n_0}}(\xi_i),\mathbb{E}T(\xi_i)\\\tilde{Z}_{2^{N+n_0}}(\xi_i),\mathbb{E}Z(\xi_i)
\end{pmatrix*}\Big)^2|N=n\Big]\nonumber\\
=& \sum_{j=1}^p \mathbb{E} \Big[(R_{ij}\begin{pmatrix*}
\bar{S}_{2^{N+n_0+1}}(\xi_i),\mathbb{E}S(\xi_i)\\\bar{T}_{2^{N+n_0+1}}(\xi_i),\mathbb{E}T(\xi_i)\\\bar{Z}_{2^{N+n_0+1}}(\xi_i),\mathbb{E}Z(\xi_i)
\end{pmatrix*} -\frac{1}{2}R_{ij}\begin{pmatrix*}
\bar{S}_{2^{N+n_0}}(\xi_i),\mathbb{E}S(\xi_i)\\\bar{T}_{2^{N+n_0}}(\xi_i),\mathbb{E}T(\xi_i)\\\bar{Z}_{2^{N+n_0}}(\xi_i),\mathbb{E}Z(\xi_i)
\end{pmatrix*}-\frac{1}{2}R_{ij}\begin{pmatrix*}
\tilde{S}_{2^{N+n_0}}(\xi_i),\mathbb{E}S(\xi_i)\\\tilde{T}_{2^{N+n_0}}(\xi_i),\mathbb{E}T(\xi_i)\\\tilde{Z}_{2^{N+n_0}}(\xi_i),\mathbb{E}Z(\xi_i)
\end{pmatrix*})^2|N=n\Big] \nonumber\\
\leq&  \frac{3}{4}\sum_{j=1}^p \mathbb{E}\Big[4R^2_{ij}\begin{pmatrix*}
\bar{S}_{2^{N+n_0+1}}(\xi_i),\mathbb{E}S(\xi_i)\\\bar{T}_{2^{N+n_0+1}}(\xi_i),\mathbb{E}T(\xi_i)\\\bar{Z}_{2^{N+n_0+1}}(\xi_i),\mathbb{E}Z(\xi_i)
\end{pmatrix*} +R^2_{ij}\begin{pmatrix*}
\bar{S}_{2^{N+n_0}}(\xi_i),\mathbb{E}S(\xi_i)\\\bar{T}_{2^{N+n_0}}(\xi_i),\mathbb{E}T(\xi_i)\\\bar{Z}_{2^{N+n_0}}(\xi_i),\mathbb{E}Z(\xi_i)
\end{pmatrix*}+R^2_{ij}\begin{pmatrix*}
\tilde{S}_{2^{N+n_0}}(\xi_i),\mathbb{E}S(\xi_i)\\\tilde{T}_{2^{N+n_0}}(\xi_i),\mathbb{E}T(\xi_i)\\\tilde{Z}_{2^{N+n_0}}(\xi_i),\mathbb{E}Z(\xi_i)
\end{pmatrix*}|N=n \Big] \nonumber\\
\leq&\sum_{j=1}^p 12p^2 d^6l^6_{\mathcal{D}}(d+dp+dp^2)\|x-\tilde{x}\|_2^2 \cdot \nonumber\\
& \mathbb{E}\Big[\sum_{h^{\prime}=1}^d 4([\bar{T}_{2^{N+n_0+1}}(\xi_i)]_{h^{\prime}}-[\mathbb{E}T(\xi_i)]_{h^{\prime}})^4+([\bar{T}_{2^{N+n_0}}(\xi_i)]_{h^{\prime}}-[\mathbb{E}T(\xi_i)]_{h^{\prime}})^4+([\tilde{T}_{2^{N+n_0}}(\xi_i)]_{h^{\prime}}-[\mathbb{E}T(\xi_i)]_{h^{\prime}})^4 \nonumber\\
&+\sum_{\substack{1 \leq k^{\prime} \leq d \\1 \leq i^{\prime} \leq p\\1 \leq j^{\prime} \leq p }} 4([\bar{Z}_{2^{N+n_0+1}}(\xi_i)]_{k^{\prime}i^{\prime}j^{\prime}}-[\mathbb{E}Z(\xi_i)]_{k^{\prime}i^{\prime}j^{\prime}})^4+([\bar{Z}_{2^{N+n_0}}(\xi_i)]_{k^{\prime}i^{\prime}j^{\prime}}-[\mathbb{E}Z(\xi_i)]_{k^{\prime}i^{\prime}i^{\prime}j^{\prime}})^4+([\tilde{Z}_{2^{N+n_0}}(\xi_i)]_{k^{\prime}i^{\prime}j^{\prime}}-[\mathbb{E}Z(\xi_i)]_{k^{\prime}i^{\prime}j^{\prime}})^4\nonumber\\
&+\sum_{\substack{1 \leq k^{\prime} \leq d \\1 \leq j^{\prime} \leq p }} 4([\bar{S}_{2^{N+n_0+1}}(\xi_i)]_{k^{\prime}j^{\prime}}-[\mathbb{E}S(\xi_i)]_{k^{\prime}j^{\prime}})^4+([\bar{S}_{2^{N+n_0}}(\xi_i)]_{k^{\prime}j^{\prime}}-[\mathbb{E}S(\xi_i)]_{k^{\prime}j^{\prime}})^4+([\tilde{S}_{2^{N+n_0}}(\xi_i)]_{k^{\prime}j^{\prime}}-[\mathbb{E}S(\xi_i)]_{k^{\prime}j^{\prime}})^4 |N=n\Big],
\end{align}
where the last two inequality follows from equation \eqref{elementary} and  \eqref{reminder2}. The equality above it  follows from equation \eqref{SandT}. Continuing on \eqref{almostdone}, it follows from Lemma \ref{finiteclt} that 
\begin{align}\label{blah2}
\mathbb{E}[\|\nabla \Big([Y_1(\xi_i)]_i-0.5\cdot([Y_2(\xi_i)]_i+[Y_3(\xi_i)]_i)\Big)\|_2^2|N=n] \leq&  36p^3 d^6l^6_{\mathcal{D}}(d+dp+dp^2) C_{\mathcal{D}}^{\prime\prime}\cdot(log2)^2\cdot\frac{(n+n_0+3)^2}{2^{2(n+n_0)}}\cdot \|x-\tilde{x}\|_2^2 \nonumber\\
\leq&108p^5 d^7l^6_{\mathcal{D}} C_{\mathcal{D}}^{\prime\prime}\cdot(log2)^2\cdot\frac{(n+n_0+3)^2}{2^{2(n+n_0)}}\cdot \|x-\tilde{x}\|_2^2
\end{align}
Combine \eqref{firstterm} and \eqref{blah2}. Let $C_{\mathcal{D}} \triangleq 2p^2d^2l^4_{\mathcal{D}}(1+dl_{\mathcal{D}})^2+\frac{216p^6 d^7l^6_{\mathcal{D}} C_{\mathcal{D}}^{\prime\prime}\cdot(log2)^2}{(1-0.5^{\gamma})2^{2n_0}}\sum\limits_{n=0}^{\infty}{(n+n_0+3)^2}\cdot{2^{(\gamma-2)n}} $. Notice $C_{\mathcal{D}} < \infty$ for any $n_0 \geq 0$ because $\gamma<2$.  Now \eqref{lpbound} becomes:
\begin{align*}
\mathbb{E}\|W(x,v_1)-W(\tilde{x},v_1)\|_2^2 \leq& \Big(\sum_{i=1}^p  2pd^2l^4_{\mathcal{D}}(1+dl_{\mathcal{D}})^2+\sum_{i=1}^p\sum_{n=0}^{\infty}216p^5 d^7l^6_{\mathcal{D}} C_{\mathcal{D}}^{\prime\prime}\cdot(log2)^2\cdot\frac{(n+n_0+3)^2}{2^{2(n+n_0)}\tilde{p}_n}\Big)\cdot\|x-\tilde{x}\|_2^2 \nonumber\\
=& C_{\mathcal{D}}\cdot\|x-\tilde{x}\|_2^2
\end{align*}
\end{proof}
\subsection{Properties of the Simulated Variance Reduced Gradient Algorithm}
In this section we prove the convergence property of Algorithm 4. Notice the constant $C_{\mathcal{D}}$ is defined in Theorem \ref{th3} and $\mu$ is the strong convexity coefficient.
\begin{Lemma}\label{coco}
Let $F:\mathbb{R}^p \rightarrow \mathbb{R}$ be a convex function with $L$-Lipschitz gradient and denote $x_{\star}=\arg\min\limits_{x \in \mathbb{R}^p} F(x)$ to be the global minimizer of $F(\cdot)$, then for any $x \in \mathbb{R}^p$,
\begin{equation*}
\frac{1}{2L}\|\nabla F(x)\|_2^2 \leq F(x)-F(x_{\star}).
\end{equation*}
\end{Lemma}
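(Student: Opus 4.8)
The plan is to invoke the smoothness upper bound and evaluate it at the gradient-descent step taken from $x$. Since $F$ has $L$-Lipschitz gradient, Lemma \ref{secondorder} supplies, for every $x,y \in \mathbb{R}^p$, the one-sided estimate
\[
F(y) \leq F(x) + \langle \nabla F(x), y-x \rangle + \frac{L}{2}\|y-x\|_2^2 .
\]
This is the only property of $F$ that the argument will use (beyond optimality of $x_\star$).

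First I would fix $x$ and specialize this inequality to the point $y = x - \tfrac{1}{L}\nabla F(x)$, so that $y - x = -\tfrac{1}{L}\nabla F(x)$. Substituting into the right-hand side makes the linear and quadratic terms collapse, since $\langle \nabla F(x), y-x\rangle = -\tfrac{1}{L}\|\nabla F(x)\|_2^2$ and $\tfrac{L}{2}\|y-x\|_2^2 = \tfrac{1}{2L}\|\nabla F(x)\|_2^2$, yielding
\[
F\!\left(x - \tfrac{1}{L}\nabla F(x)\right) \leq F(x) - \frac{1}{2L}\|\nabla F(x)\|_2^2 .
\]
This choice of $y$ is precisely the minimizer of the quadratic upper bound on the right-hand side of Lemma \ref{secondorder}, which is what makes the bound tight.

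Second, because $x_\star$ is the global minimizer of $F$, we have $F(x_\star) \leq F(y)$ for the particular $y$ above. Chaining this with the previous display gives $F(x_\star) \leq F(x) - \tfrac{1}{2L}\|\nabla F(x)\|_2^2$, and rearranging produces the claimed inequality $\tfrac{1}{2L}\|\nabla F(x)\|_2^2 \leq F(x) - F(x_\star)$.

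There is no genuine obstacle here: the single idea is the substitution $y = x - \tfrac{1}{L}\nabla F(x)$, after which the result follows from optimality of $x_\star$. It is worth noting that convexity of $F$ is not actually invoked anywhere in this chain — only the $L$-Lipschitz gradient and the fact that $x_\star$ is a global minimizer are used — so the stated hypotheses are slightly stronger than what the proof requires.
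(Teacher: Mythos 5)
Your proof is correct and follows essentially the same route as the paper: the paper minimizes the quadratic upper bound $F_x(y)=F(x)+\nabla F(x)(y-x)+\frac{L}{2}\|y-x\|_2^2$ over $y$ and compares with $F(x_\star)$, while you simply plug in the minimizer $y = x - \frac{1}{L}\nabla F(x)$ directly, which is the identical computation. Your side remark that convexity is never actually used also matches the paper's argument, which likewise relies only on the smoothness bound and global optimality of $x_\star$.
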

\begin{proof}
	Let $F_x(y)=F(x)+\nabla F(x)(y-x)+\frac{L}{2}\|y-x\|_2^2$, since $F(\cdot)$ has $L$-Lipschitz gradient, we have $F(y) \leq F_x(y)$ for all $x \in \mathbb{R}^p$. It then follows $F(x_{\star}) \leq \min\limits_{y \in \mathbb{R}^p} F_x(y)$. It is straightforward to compute the global minimizer $y_{\star}$ of the quadratic function $F_x(y)$ to be $y_{\star}=x-\frac{1}{L}\nabla F(x)$.so we have:
	\begin{equation*}
	F(x_{\star}) \leq  F_x(y_{\star})= F(x)-\frac{1}{2L}\|F(x)\|_2^2
	\end{equation*}
\end{proof}
\begin{Theorem}\label{Big1}
	Consider the Simulated SVRG Algorithm 4 with options II. Let $\lambda$ is small and $M$ is sufficiently large so that 
	\begin{equation}
	\alpha = \frac{1}{\mu(1-\frac{4}{\mu}C_{\mathcal{D}}\lambda)\lambda M}+\frac{(\frac{4}{\mu}C_{\mathcal{D}}+2L)\lambda}{1-\frac{4}{\mu}C_{\mathcal{D}}\lambda} < 1.
	\end{equation} Then under  Assumptions 1-3, we have geometric convergence in expectation for the Simulated SVRG :
	\begin{equation*}
	\mathbb{E}[F(\tilde{x}_s)] \leq F(\tilde{x}_{\star})+\alpha^s[F(\tilde{x}_0)-F(\tilde{x}_{\star})] 
	\end{equation*}
\end{Theorem}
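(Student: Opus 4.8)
The plan is to run a standard SVRG-type single-epoch analysis, using Theorem \ref{th3} in place of the usual component-wise variance bound. Fix an epoch $s$, write $\tilde{x} = \tilde{x}_{s-1}$, let $x_\star$ be the minimizer, and let $\mathcal{F}_{t-1}$ denote the history up to step $t-1$. First I would record that the simulated gradient is conditionally unbiased: since \textbf{SimulatedGradient} returns $\nu_t = W(x_{t-1},v_t) - W(\tilde{x},v_t) + \nabla F(\tilde{x})$ and Proposition 1 gives $\mathbb{E}[W(\cdot,v_t)] = \nabla F(\cdot)$ after averaging over $v_t \sim v$ and the internal randomness, we get $\mathbb{E}[\nu_t \mid \mathcal{F}_{t-1}] = \nabla F(x_{t-1})$.

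The key estimate is a second-moment bound on $\nu_t$. Writing $A = W(x_{t-1},v_t) - W(\tilde{x},v_t)$ and using $\mathbb{E}\|\nu_t\|_2^2 = \|\nabla F(x_{t-1})\|_2^2 + \mathrm{Var}(\nu_t)$ together with $\mathrm{Var}(\nu_t) \le \mathbb{E}\|A\|_2^2 \le C_{\mathcal{D}}\|x_{t-1}-\tilde{x}\|_2^2$ from Theorem \ref{th3}, I would convert everything into function-value gaps. Lemma \ref{coco} bounds $\|\nabla F(x_{t-1})\|_2^2 \le 2L(F(x_{t-1}) - F(x_\star))$, while $\mu$-strong convexity gives $\|x_{t-1}-\tilde{x}\|_2^2 \le 2\|x_{t-1}-x_\star\|_2^2 + 2\|\tilde{x}-x_\star\|_2^2 \le \tfrac{4}{\mu}\big[(F(x_{t-1})-F(x_\star)) + (F(\tilde{x})-F(x_\star))\big]$. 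Combining,
\begin{equation*}
\mathbb{E}[\|\nu_t\|_2^2 \mid \mathcal{F}_{t-1}] \le \Big(2L + \tfrac{4}{\mu}C_{\mathcal{D}}\Big)(F(x_{t-1})-F(x_\star)) + \tfrac{4}{\mu}C_{\mathcal{D}}\,(F(\tilde{x})-F(x_\star)).
\end{equation*}
This is exactly where the $\tfrac{4}{\mu}C_{\mathcal{D}}$ coefficient appearing in $\alpha$ originates.

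Next I would expand $x_t = x_{t-1} - \lambda\nu_t$, take conditional expectations, and use convexity $\langle \nabla F(x_{t-1}), x_{t-1}-x_\star\rangle \ge F(x_{t-1})-F(x_\star)$ to obtain the one-step recursion
\begin{equation*}
\mathbb{E}[\|x_t - x_\star\|_2^2\mid \mathcal{F}_{t-1}] \le \|x_{t-1}-x_\star\|_2^2 - \big(2\lambda - \lambda^2(2L+\tfrac{4}{\mu}C_{\mathcal{D}})\big)(F(x_{t-1})-F(x_\star)) + \tfrac{4}{\mu}C_{\mathcal{D}}\lambda^2(F(\tilde{x})-F(x_\star)).
\end{equation*}
Summing over $t=1,\dots,M$ telescopes the distance terms; dropping $\mathbb{E}\|x_M-x_\star\|_2^2 \ge 0$, using $x_0=\tilde{x}$ and $\|\tilde{x}-x_\star\|_2^2 \le \tfrac{2}{\mu}(F(\tilde{x})-F(x_\star))$, and invoking option II so that $\mathbb{E}[F(\tilde{x}_s)-F(x_\star)] = \tfrac{1}{M}\sum_{t=1}^M \mathbb{E}(F(x_{t-1})-F(x_\star))$, yields the per-epoch contraction $\mathbb{E}[F(\tilde{x}_s)-F(x_\star)] \le \alpha\,(F(\tilde{x}_{s-1})-F(x_\star))$ once the constants are collected into the stated $\alpha$. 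Taking full expectations and iterating over $s$ gives the geometric bound.

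The main obstacle is the second paragraph: classical SVRG derives its variance bound directly from co-coercivity of the individual component gradients, whereas here the only available control is the distance-based second-moment bound of Theorem \ref{th3}. Converting $C_{\mathcal{D}}\|x_{t-1}-\tilde{x}\|_2^2$ into function-value gaps forces the use of strong convexity at both $x_{t-1}$ and $\tilde{x}$, which is what produces the $\tfrac{4}{\mu}C_{\mathcal{D}}$ factors and the denominator $1-\tfrac{4}{\mu}C_{\mathcal{D}}\lambda$; keeping this denominator positive and $\alpha<1$ is precisely the role of the ``$\lambda$ small, $M$ large'' hypothesis. A secondary point requiring care is that $\nu_t$ is unbiased only after averaging over both $v_t$ and the internal MLMC randomness, so the conditional-expectation bookkeeping at step $t$ must treat all of that randomness jointly.
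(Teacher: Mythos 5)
Your proof follows the paper's architecture step for step: conditional unbiasedness of $\nu_t$, a second-moment bound built from Theorem \ref{th3}, Lemma \ref{coco} and strong convexity, the expansion of $\mathbb{E}[\|x_t-x_\star\|_2^2\mid\mathcal{F}_{t-1}]$ with convexity handling the cross term, telescoping over the epoch, option II averaging, and iteration over $s$. The one genuine difference is \emph{how} you bound $\mathbb{E}[\|\nu_t\|_2^2\mid\mathcal{F}_{t-1}]$, and it breaks your last step. You use the bias--variance identity $\mathbb{E}\|\nu_t\|_2^2=\|\nabla F(x_{t-1})\|_2^2+\mathrm{Var}(\nu_t)$, which attaches the Lipschitz term $2L$ to the \emph{current} gap $F(x_{t-1})-F(x_\star)$ and only $\tfrac{4}{\mu}C_{\mathcal{D}}$ to the reference gap. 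The paper instead uses $\|u+v\|_2^2\le 2\|u\|_2^2+2\|v\|_2^2$ with $u=W(x_{t-1},v_t)-W(\tilde{x},v_t)$, $v=\nabla F(\tilde{x})$, and then $\|\nabla F(\tilde{x})\|_2^2\le 2L(F(\tilde{x})-F(x_\star))$, so the $L$-term lands on the \emph{reference} gap and the coefficients become $\tfrac{8}{\mu}C_{\mathcal{D}}$ and $\tfrac{8}{\mu}C_{\mathcal{D}}+4L$; pushed through the telescoping step these yield exactly the stated $\alpha$. Your coefficients instead yield
\begin{equation*}
\mathbb{E}[F(\tilde{x}_s)-F(x_\star)]\le\alpha'\,\mathbb{E}[F(\tilde{x}_{s-1})-F(x_\star)],\qquad
\alpha'=\frac{1}{\mu\bigl(1-(L+\tfrac{2}{\mu}C_{\mathcal{D}})\lambda\bigr)\lambda M}+\frac{\tfrac{2}{\mu}C_{\mathcal{D}}\lambda}{1-(L+\tfrac{2}{\mu}C_{\mathcal{D}})\lambda},
\end{equation*}
whose denominators contain $L$ and whose second numerator does not. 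So your closing assertion that the constants collect ``into the stated $\alpha$'' is false: your recursion simply does not produce that $\alpha$, and as a proof of the literal statement this is the gap.

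The gap is fixable in two ways. The direct fix is to abandon the bias--variance decomposition and use the paper's cruder factor-of-two bound, after which every constant matches verbatim. Alternatively, your (tighter in spirit) inequality is in fact strong enough, but only via an extra argument: writing $a=\tfrac{4}{\mu}C_{\mathcal{D}}$ and $b=L+\tfrac{2}{\mu}C_{\mathcal{D}}$, the hypothesis $\alpha<1$ (with $1-a\lambda>0$) is equivalent to $\tfrac{1}{\mu M}<\lambda\bigl(1-(a+2b)\lambda\bigr)$, while $\alpha'\le\alpha$ is equivalent to $\tfrac{b-a}{\mu M}\le\lambda\bigl[2b(1-b\lambda)-\tfrac{a}{2}(1-a\lambda)\bigr]$; these chain together because of the algebraic identity $2b(1-b\lambda)-\tfrac{a}{2}(1-a\lambda)-(b-a)\bigl(1-(a+2b)\lambda\bigr)=(b+\tfrac{a}{2})(1-a\lambda)\ge 0$. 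Hence $\alpha<1$ does force $\alpha'\le\alpha$ and your per-epoch contraction implies the stated one --- but this comparison is a nontrivial lemma that your write-up neither states nor proves, and without it (or the paper's decomposition) the proof of the theorem as stated is incomplete.
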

\begin{proof}
	It follows from Lemma \ref{coco} that
	\begin{equation}\label{svrg8}
	\|\nabla F(x)-\nabla F(x_{\star})\|^2_2=\|\nabla F(x)\|^2_2 \leq 2L [F(x)-F(x_{\star})]
	\end{equation}
	Now conditioning on $x_{t-1}$, we can take expectation with respect to $v_t \in \Omega_v$ to obtain 
	\begin{align}
	\mathbb{E}[\|\nu_t\|_2^2 \ | \  x_{t-1}] \leq & 2\mathbb{E}[\|W(x_{t-1},v_t)-W(\tilde{x},v_t)\|_2^2\ | \  x_{t-1}] + 2 \nabla \|F(\tilde{x}) \|_2^2  \nonumber \\
	\leq & 2C_{\mathcal{D}}\|x_{t-1}-\tilde{x}\|_2^2+ 4L[F(\tilde{x})-F(x_{\star})] \nonumber \\
	\leq & 4C_{\mathcal{D}}(\|x_{t-1}-{x_{\star}}\|_2^2 +\|\tilde{x}-x_{\star}\|_2^2) + 4L[F(\tilde{x})-F(x_{\star})] \nonumber \\
	\leq & \frac{8}{\mu}C_{\mathcal{D}} \cdot [F(x_{t-1})-F(x_{\star})] + (\frac{8}{\mu}C_{\mathcal{D}} + 4L)\cdot[F(\tilde{x})-F(x_{\star})].
	\end{align}
	where the second inequality follows from Theorem \ref{th3} and equation \eqref{svrg8}. The last inequality follows from the strong convexity of $F(\cdot)$. Thus,
	\begin{align}
	&\mathbb{E}[\|x_t-x_{\star}\|_2^2 \ | \ x_{t-1}] \nonumber \\
	=&\|x_{t-1}-x_{\star}\|_2^2-2\lambda(x_{t-1}-x_{\star})^{\intercal}\mathbb{E}[\nu_t \ | x_{t-1}]+\lambda^2\mathbb{E}[\|\nu_t\|_2^2 \ | x_{t-1}] \nonumber \\
	\leq & \|x_{t-1}-x_{\star}\|_2^2-2\lambda(x_{t-1}-x_{\star})^{\intercal}\nabla F(x_{t-1})+ \frac{8}{\mu}C_{\mathcal{D}} \lambda^2\cdot [F(x_{t-1})-F(x_{\star})] + (\frac{8}{\mu}C_{\mathcal{D}} + 4L)\lambda^2\cdot[F(\tilde{x})-F(x_{\star})]\nonumber \\
	\leq & \|x_{t-1}-x_{\star}\|_2^2-2\lambda [F(x_{t-1})-F(x_{\star})]+\frac{8}{\mu}C_{\mathcal{D}} \lambda^2\cdot [F(x_{t-1})-F(x_{\star})] + (\frac{8}{\mu}C_{\mathcal{D}} + 4L)\lambda^2\cdot[F(\tilde{x})-F(x_{\star})] \nonumber \\
	=& \|x_{t-1}-x_{\star}\|_2^2-2\lambda(1-\frac{4}{\mu}C_{\mathcal{D}}\lambda) [F(x_{t-1})-F(x_{\star})]+(\frac{8}{\mu}C_{\mathcal{D}} + 4L)\lambda^2[F(\tilde{x})-F(x_{\star})].
	\end{align}
	where the third line follows from the unbiasedness of the simulated gradient and the fourth line follows from the convexity of $F(\cdot)$. Now we consider a fixed stage $s$, so that $\tilde{x}=\tilde{x}_{s-1}$ and $\tilde{x}_s$ is selected uniformly after all $M$ updates are completed. Summing over the previous inequality over $t=1,...,M$, taking expectation and use options II at stage $s$, we obtain
	\begin{align}
	&\mathbb{E}[\|x_{M}-x_{\star}\|_2^2]+2\lambda(1-\frac{4}{\mu}C_{\mathcal{D}}\lambda)M\mathbb{E}[F(\tilde{x}_s)-F(x_{\star})] \nonumber \\
	\leq & \mathbb{E}[\|x_{0}-x_{\star}\|_2^2] +(\frac{8}{\mu}C_{\mathcal{D}} + 4L)\lambda^2M\mathbb{E}[F(\tilde{x})-F(x_{\star})] \nonumber \\
	= & \mathbb{E}[\|\tilde{x}-x_{\star}\|_2^2] + (\frac{8}{\mu}C_{\mathcal{D}} + 4L)\lambda^2M\mathbb{E}[F(\tilde{x})-F(x_{\star})] \nonumber \\
	\leq & \frac{2}{\mu} \mathbb{E}[F(\tilde{x})-F(x_{\star})]+(\frac{8}{\mu}C_{\mathcal{D}} + 4L)\lambda^2M\mathbb{E}[F(\tilde{x})-F(x_{\star})] \nonumber \\
	= & (\frac{2}{\mu}+(\frac{8}{\mu}C_{\mathcal{D}} + 4L)\lambda^2 M)\mathbb{E}[F(\tilde{x})-F(x_{\star})]
	\end{align}
	Thus we obtain
	\begin{equation}
	\mathbb{E}[F(\tilde{x}_s)-F(x_{\star})] \leq \bigg[\frac{1}{\mu(1-\frac{4}{\mu}C_{\mathcal{D}}\lambda)\lambda M}+\frac{(\frac{4}{\mu}C_{\mathcal{D}}+2L)\lambda}{1-\frac{4}{\mu}C_{\mathcal{D}}\lambda}\bigg]\mathbb{E}[F(\tilde{x}_{s-1})-F(x_{\star})]
	\end{equation}
	This implies that $\mathbb{E}[F(\tilde{x}_{s})-F(x_{\star})] \leq \alpha^s \cdot \mathbb{E}[F(\tilde{x}_{0})-F(x_{\star})]$. The conclusion follows.
\end{proof}
\begin{Corollary}
Let $\{\tilde{x}_s\}_{s\geq 0}$ be the sequence of output from each epoch of the Simulated SVRG algorithm. Then, with probability 1, $\tilde{x}_s$ converge exponentially fast to $x_{\star}$.
\end{Corollary}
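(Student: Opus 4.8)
The plan is to upgrade the geometric decay in expectation established in Theorem \ref{Big1} to an almost sure statement via a Borel--Cantelli argument, and then to transfer the decay of the optimality gap to the iterates using strong convexity. Write $D_s \triangleq F(\tilde{x}_s) - F(x_\star) \geq 0$, which is nonnegative because $x_\star$ is the global minimizer of $F$, and recall from Theorem \ref{Big1} that $\mathbb{E}[D_s] \leq \alpha^s \, \mathbb{E}[D_0]$ with $\alpha < 1$.

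First I would fix any $\beta$ with $\alpha < \beta < 1$ and apply Markov's inequality to the nonnegative random variable $D_s$: using $\mathbb{E}[D_s] \leq \alpha^s \mathbb{E}[D_0]$ we obtain $\mathbb{P}(D_s > \beta^s) \leq \mathbb{E}[D_s]/\beta^s \leq \mathbb{E}[D_0]\,(\alpha/\beta)^s$. Because $\alpha/\beta < 1$, summing over $s$ gives $\sum_{s \geq 0} \mathbb{P}(D_s > \beta^s) \leq \mathbb{E}[D_0] \sum_{s \geq 0} (\alpha/\beta)^s < \infty$. By the Borel--Cantelli lemma, the event $\{D_s > \beta^s\}$ therefore occurs only finitely often with probability one, so there is an almost surely finite (random) index $S$ such that $D_s \leq \beta^s$ for all $s \geq S$. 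This already yields almost sure exponential decay of the optimality gap.

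To convert this into convergence of the iterates I would invoke the $\mu$-strong convexity of $F$, exactly as used in the proof of Theorem \ref{Big1}, which gives $\tfrac{\mu}{2}\|\tilde{x}_s - x_\star\|_2^2 \leq F(\tilde{x}_s) - F(x_\star) = D_s$, and hence $\|\tilde{x}_s - x_\star\|_2 \leq \sqrt{2/\mu}\,\beta^{s/2}$ for all $s \geq S$. Since $\beta^{1/2} < 1$, this is precisely the claimed exponential convergence of $\tilde{x}_s$ to $x_\star$ with probability one.

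The argument is essentially routine once Theorem \ref{Big1} is in hand; the only points requiring care are the nonnegativity of $D_s$ (so that Markov's inequality applies to it), the strict separation of rates $\alpha < \beta < 1$ ensuring summability of the tail probabilities, and the fact that $S$ is random but almost surely finite, so the exponential bound holds only eventually rather than for every $s$. The main conceptual step, rather than a genuine obstacle, is recognizing that geometric-rate convergence in expectation plus summability is exactly the input Borel--Cantelli needs to deliver almost sure exponential convergence, with strong convexity providing the passage from function values to iterates.
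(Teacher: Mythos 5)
Your proposal is correct and follows essentially the same route as the paper's proof: Markov's inequality applied to the optimality gap $F(\tilde{x}_s)-F(x_\star)$ with an intermediate rate $\beta$ (the paper calls it $\rho$) strictly between $\alpha$ and $1$, summability of the tail probabilities, Borel--Cantelli to get almost sure eventual decay at rate $\beta^s$, and strong convexity to pass from function values to iterates via $\frac{\mu}{2}\|\tilde{x}_s-x_\star\|_2^2 \leq F(\tilde{x}_s)-F(x_\star)$. No gaps; your explicit handling of the random index $S$ is a slightly more careful phrasing of the paper's ``for $s$ large enough.''
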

\begin{proof}
It follows from Theorem \ref{Big1} that we can find $0<\alpha<1$ such that $
\mathbb{E}[F(\tilde{x}_s)] \leq F(\tilde{x}_{\star})+\alpha^s[F(\tilde{x}_0)-F(\tilde{x}_{\star})] 
$.
Pick any $\alpha<\rho<1$. Define the set  $\mathcal{A}_{s}=\{F(\tilde{x}_s)-F(x_{\star})>\rho^s\}$ in probability space, we have $\mathbb{P}(\mathcal{A}_s) \leq (\frac{\alpha}{\rho})^s\cdot\mathbb{E}[F(\tilde{x}_{0})-F(x_{\star})]$ which implies that $\sum_{s\geq 0}\mathbb{P}(\mathcal{A}_s) < \infty$. It then follows from Borel-Cantelli lemma that 
\begin{align}\label{a.s.}
\mathbb{P}(\mathcal{A}_s \text{ occurs infinitely often} )=\mathbb{P}\Big(\limsup_{s\rightarrow \infty}\mathcal{A}_s\Big)=\mathbb{P}(\bigcap_{t=0}^{\infty}\bigcup_{s=t}^{\infty}\mathcal{A}_s) =\inf_{t\geq 0}\mathbb{P}(\bigcup_{s=t}^{\infty}\mathcal{A}_s) \leq \inf_{t\geq 0}\sum_{s \geq t}\mathbb{P}(\mathcal{A}_s) =0.
\end{align}
Thus with probability 1, $F(\tilde{x}_s)-F(x_{\star})< \rho^s$ for $s$ large enough which implies $\|\tilde{x}_s-x_{\star}\|_2^2 \leq \frac{2}{\mu}\rho^s$.
\end{proof}
\subsection{Properties of the Stochastically Controlled Simulated Gradient Algorithm}
In this section we prove the convergence property of Algorithm 5.
\begin{Lemma}
	Fix $x \in \mathcal{D}$ and $K,B \geq 1$, we sample a batch $\mathcal{I} \subset \Omega_v$ with $|{\mathcal{I}}|=B$ following the distribution of $v$ and independently generate $h_k(x)=\frac{1}{B}\sum_{v_i\in \mathcal{I}} \text{UnibasedGradient}(x,v_i)$ for $1\leq k \leq K$. Let $C^{\prime}_{\mathcal{D}}$ be the constant in the proof of Proposition \ref{finitev} where $\mathbb{E}\|W(x,v)\|_2^2 \leq C^{\prime}_{\mathcal{D}}$ for arbitary $v \in \Omega_v$. Define $\tilde{h}(x)=\frac{1}{K}\sum_{i=1}^{K}h_i(x)$, we have
	\begin{equation}
	\mathbb{E}[\tilde{h}(x)]=\nabla F(x) \qquad \text{and} \qquad Var[\tilde{h}(x)] \leq \frac{C_{\mathcal{D}}^{\prime}}{KB} +4pd^2l^4_{\mathcal{D}}(\frac{1}{K}+\frac{1}{B}),
	\end{equation}
	so $Var[\tilde{h}(x)]$ can be made arbitrarily small for any $x \in \mathcal{D}$ by making $K$ and $B$ sufficiently large.
\end{Lemma}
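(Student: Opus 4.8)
The plan is to separate the two independent sources of randomness in $\tilde h(x)$ — the draw of the mini-batch $\mathcal{I} = \{v_1,\dots,v_B\}$ and the internal Monte-Carlo randomization inside each call to UnbiasedGradient — and to control each using the two facts already in hand: Proposition 1 (unbiasedness of $W(x,v)$) and Proposition 2 ($\mathbb{E}\|W(x,v)\|_2^2 \le C'_{\mathcal{D}}$). Throughout I write $W_k(x,v_i)$ for the output of the $k$-th independent invocation of UnbiasedGradient$(x,v_i)$, so that $h_k(x)=\frac{1}{B}\sum_{i=1}^B W_k(x,v_i)$ and $\tilde h(x)=\frac{1}{KB}\sum_{k=1}^K\sum_{i=1}^B W_k(x,v_i)$. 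I also set $\mu_i := \mathbb{E}[W_k(x,v_i)\mid v_i] = \nabla\big(f_{v_i}(\mathbb{E}_w g_w(x))\big)$, which does not depend on $k$ since the calls are identically distributed given $v_i$; by Proposition 1, $\mathbb{E}_{v_i}[\mu_i] = \nabla F(x)$. Unbiasedness is then immediate: conditioning on $v_i$ and using the tower property gives $\mathbb{E}[W_k(x,v_i)] = \mathbb{E}_{v_i}[\mu_i] = \nabla F(x)$ for every $k,i$, and averaging yields $\mathbb{E}[\tilde h(x)] = \nabla F(x)$.

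For the variance I would invoke the law of total variance, conditioning on the batch $\mathcal{I}$:
\[
\mathrm{Var}[\tilde h(x)] = \mathbb{E}\big[\mathrm{Var}[\tilde h(x)\mid\mathcal{I}]\big] + \mathrm{Var}\big[\mathbb{E}[\tilde h(x)\mid\mathcal{I}]\big].
\]
The purpose of conditioning on $\mathcal{I}$ is precisely to decouple the two noise sources. For the first term, given $\mathcal{I}$ the $h_k$ are i.i.d., so $\mathrm{Var}[\tilde h(x)\mid\mathcal{I}] = \frac{1}{K}\mathrm{Var}[h_1(x)\mid\mathcal{I}]$; moreover, given $\mathcal{I}$ the summands $W_1(x,v_i)$, $i=1,\dots,B$, are independent (distinct components $v_i$, fresh internal randomness), whence $\mathrm{Var}[h_1(x)\mid\mathcal{I}] = \frac{1}{B^2}\sum_{i=1}^B \mathrm{Var}[W_1(x,v_i)\mid v_i]$. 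Taking expectations and bounding each conditional variance by the corresponding second moment, $\mathbb{E}\,\mathrm{Var}[W_1(x,v_i)\mid v_i] \le \mathbb{E}\|W(x,v)\|_2^2 \le C'_{\mathcal{D}}$ by Proposition 2, gives $\mathbb{E}[\mathrm{Var}[\tilde h(x)\mid\mathcal{I}]] \le \frac{C'_{\mathcal{D}}}{KB}$.

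For the second term, $\mathbb{E}[\tilde h(x)\mid\mathcal{I}] = \frac{1}{B}\sum_{i=1}^B \mu_i$ is a sample mean of the i.i.d. vectors $\mu_i$, so $\mathrm{Var}[\mathbb{E}[\tilde h(x)\mid\mathcal{I}]] = \frac{1}{B}\mathbb{E}\|\mu_1 - \nabla F(x)\|_2^2$. Since $[\mu_i]_j = \sum_{\ell=1}^d [\mathbb{E}_w\nabla g_w(x)]_{\ell j}\,[\nabla f_{v_i}(\mathbb{E}_w g_w(x))]_\ell$ has every factor bounded by $l_{\mathcal{D}}$, we get $\|\mu_i\|_2^2 \le pd^2 l_{\mathcal{D}}^4$, and hence $\mathbb{E}\|\mu_1 - \nabla F(x)\|_2^2 \le 2\|\mu_1\|_2^2 + 2\|\nabla F(x)\|_2^2 \le 4pd^2 l_{\mathcal{D}}^4$ (this is where the factor $4$ enters). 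Combining the two pieces gives $\mathrm{Var}[\tilde h(x)] \le \frac{C'_{\mathcal{D}}}{KB} + \frac{4pd^2 l_{\mathcal{D}}^4}{B}$, which is dominated by the asserted bound $\frac{C'_{\mathcal{D}}}{KB} + 4pd^2 l_{\mathcal{D}}^4\big(\frac{1}{K}+\frac{1}{B}\big)$; the additional $1/K$ in the stated bound is slack, so the claimed inequality follows a fortiori, and both terms vanish as $K,B\to\infty$.

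The one place that genuinely needs care — and the main obstacle — is the dependence bookkeeping: the $h_k$ are \emph{not} unconditionally independent, because they all reuse the single batch $\mathcal{I}$, so one cannot naively divide a single-sample variance by $K$. Conditioning on $\mathcal{I}$ is exactly what repairs this, cleanly isolating the $1/(KB)$ contribution of the internal randomization from the $1/B$ contribution of the batch draw. The step that licenses both variance decompositions is the verification that the within-batch summands are truly conditionally independent given $\mathcal{I}$ — distinct components $v_i$ paired with independent fresh $w$-samples drawn in each UnbiasedGradient call — after which the remaining estimates are just the uniform moment bound $C'_{\mathcal{D}}$ from Proposition 2 together with the elementary $l_{\mathcal{D}}$ bound on $\mu_i$.
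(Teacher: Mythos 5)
Your proof is correct and follows essentially the same route as the paper's: the law of total variance conditioned on the batch $\mathcal{I}$, conditional independence of the within-batch terms, the second-moment bound $\mathbb{E}\|W(x,v)\|_2^2 \le C'_{\mathcal{D}}$ for the $\frac{C'_{\mathcal{D}}}{KB}$ piece, and the elementary $l_{\mathcal{D}}$ bound for the batch-sampling piece. The only difference is cosmetic: you correctly observe that the $\frac{1}{K}$ term in the stated bound is slack (your bound $\frac{C'_{\mathcal{D}}}{KB}+\frac{4pd^2l_{\mathcal{D}}^4}{B}$ is sharper), whereas the paper picks up that term by crudely bounding a cross term that is in fact zero.
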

\begin{proof}
	First we have 
	\begin{align*}
	\mathbb{E}[\tilde{h}(x)]=\mathbb{E}[{h}_1(x)]=\mathbb{E}[\mathbb{E}[{h}_1(x)|\mathcal{I}]]=&\frac{1}{B}\mathbb{E}_{\mathcal{I}}[\mathbb{E}[\sum_{v_i \in \mathcal{I}}\text{UnibasedGradient}(x,v_i)|\mathcal{I}]] \nonumber \\
	=&\frac{1}{B}\mathbb{E}_{\mathcal{I}}[\sum_{v_i \in \mathcal{I}}\nabla (f_{v_i}(\mathbb{E}_w g_w(x))) ] = \nabla F(x).
	\end{align*}
	Secondly, for any $v \in \Omega_v$, denote $W_i=\text{UnbiasedGradient}(x,v_i)$, $h_v=\nabla (f_{v}(\mathbb{E}_w g_w(x)))$ and  $h({\mathcal{I}})=\mathbb{E}[{h}_1(x)|\mathcal{I}]=\frac{1}{B}\sum_{v_i \in \mathcal{I}}h_{v_i}$, we have
	\begin{align*}
	Var[\tilde{h}(x)] =& \mathbb{E}[Var[\tilde{h}(x)|\mathcal{I}]]+Var[\mathbb{E}[\tilde{h}(x)|\mathcal{I}]] \nonumber\\
	=& \frac{1}{K}\mathbb{E}[Var[{h}_1(x)|\mathcal{I}]]+Var_{\mathcal{I}}[h(\mathcal{I})] \nonumber\\
	= & \frac{1}{K}\mathbb{E}\big[\mathbb{E}[({h}_1(x)-h(\mathcal{I}))^\intercal({h}_1(x)-h(\mathcal{I}))|\mathcal{I}]\big]+\frac{1}{B}Var_{v}[h_v] \nonumber\\ 
	=& \frac{1}{K\cdot B^2}\mathbb{E}\big[\mathbb{E}[(\sum_{i=1}^B W_i-h_{v_i}+h_{v_i}-h(\mathcal{I}))^\intercal(\sum_{i=1}^B W_i-h_{v_i}+h_{v_i}-h(\mathcal{I}))|\mathcal{I}]\big]+\frac{1}{B}Var_{v}[h_v] \nonumber\\
	=& \frac{1}{K\cdot B^2}\mathbb{E}\big[\mathbb{E}[\sum_{i=1}^B\|W_i-h_{v_i}\|_2^2+\sum_{i=1}^B\sum_{j=1}^B(h_{v_i}-h(\mathcal{I}))^\intercal(h_{v_j}-h(\mathcal{I}))|\mathcal{I}]\big]+\frac{1}{B}Var_{v}[h_v] \nonumber\\
	 \leq & \frac{C_{\mathcal{D}}^{\prime}}{KB} +4pd^2l^4_{\mathcal{D}}(\frac{1}{K}+\frac{1}{B})
	\end{align*}
	where the last inequality follow from the definition of $C_{\mathcal{D}}^{\prime}$ and fact that each component of $h_v$ is bounded by $dl^2_{\mathcal{D}}$ for any $v \in \Omega_v$, according to the definition of $l_{\mathcal{D}}$ and $h_v$. The equality above it follows from the independence between the $W_i$'s given $\mathcal{I}$.
\end{proof}

\begin{Theorem}\label{Th3}
	Consider the Simulated SCSG Algorithm 5 with options II. Suppose the setting in Theorem \ref{Big1}. Fix $\epsilon > 0$ as the level of accuracy. Let $\lambda$ is small and $M$ is sufficiently large so that 
	\begin{equation}
	\alpha = \frac{2}{\mu(1-\frac{8}{\mu}C_{\mathcal{D}}\lambda)\lambda M}+\frac{(\frac{8}{\mu}C_{\mathcal{D}}+8L)\lambda}{1-\frac{8}{\mu}C_{\mathcal{D}}\lambda}< 1,
	\end{equation}
	while making one of $K$ and $B$ large enough so that 
	\begin{equation}
	\frac{4(\lambda+\frac{1}{2\mu})}{1-\frac{8}{\mu}C_{\mathcal{D}}\lambda} \cdot Var[\tilde{h}] < \epsilon
	\end{equation}
	Then we have the following result for the Simulated SCSG :
	\begin{equation}
	\mathbb{E}[F(\tilde{x}_{s})-F(x_{\star})] \leq \alpha^s \cdot \mathbb{E}[F(\tilde{x}_{0})-F(x_{\star})]+\frac{1}{1-\alpha} \cdot \epsilon
	\end{equation}
\end{Theorem}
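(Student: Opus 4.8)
The plan is to follow the SVRG argument of Theorem~\ref{Big1} almost verbatim, treating the stochastically estimated reference gradient $\tilde{h}(\tilde{x})$ as a mean-zero perturbation of the exact full gradient. Writing $e \triangleq \tilde{h}(\tilde{x}) - \nabla F(\tilde{x})$, the variance lemma immediately preceding this theorem gives $\mathbb{E}[e \mid \tilde{x}] = 0$ and $\mathbb{E}[\|e\|_2^2 \mid \tilde{x}] = Var[\tilde{h}] < \infty$, and the latter can be made arbitrarily small by enlarging $K$ or $B$. Fix an epoch $s$, condition on $\tilde{x} = \tilde{x}_{s-1}$ and on the realization of $e$, and let $\nu_t = W(x_{t-1},v_t) - W(\tilde{x},v_t) + \tilde{h}(\tilde{x})$. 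The first step is the second-moment bound. The only change from the proof of Theorem~\ref{Big1} is that $\|\tilde{h}(\tilde{x})\|_2^2 \leq 2\|\nabla F(\tilde{x})\|_2^2 + 2\|e\|_2^2$, so after applying Theorem~\ref{th3}, Lemma~\ref{coco}, and strong convexity exactly as in the SVRG case one obtains
\[
\mathbb{E}[\|\nu_t\|_2^2 \mid x_{t-1}, e] \leq \tfrac{8}{\mu}C_{\mathcal{D}}[F(x_{t-1})-F(x_\star)] + (\tfrac{8}{\mu}C_{\mathcal{D}} + 8L)[F(\tilde{x})-F(x_\star)] + 4\|e\|_2^2 ,
\]
the coefficient $8L$ (versus $4L$ in Theorem~\ref{Big1}) and the residual $4\|e\|_2^2$ being exactly the price of splitting $\|\tilde{h}\|_2^2$.

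Next I would expand $\mathbb{E}[\|x_t - x_\star\|_2^2 \mid x_{t-1}, e]$ as in Theorem~\ref{Big1}. The crucial new feature is that $\mathbb{E}[\nu_t \mid x_{t-1}, e] = \nabla F(x_{t-1}) + e$, so the iterate is no longer an unbiased descent direction: the expansion produces the usual $-2\lambda(x_{t-1}-x_\star)^\intercal \nabla F(x_{t-1})$ (controlled by convexity) together with a stray cross term $-2\lambda(x_{t-1}-x_\star)^\intercal e$. I would bound the latter by Young's inequality, $-2\lambda(x_{t-1}-x_\star)^\intercal e \leq \lambda\beta\|x_{t-1}-x_\star\|_2^2 + \tfrac{\lambda}{\beta}\|e\|_2^2$, and convert the $\|x_{t-1}-x_\star\|_2^2$ piece into $[F(x_{t-1})-F(x_\star)]$ through strong convexity. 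With $\beta$ taken proportional to $C_{\mathcal{D}}\lambda$, this extra term doubles the $C_{\mathcal{D}}$-correction, so that the coefficient of $[F(x_{t-1})-F(x_\star)]$ becomes $-2\lambda(1-\tfrac{8}{\mu}C_{\mathcal{D}}\lambda)$ rather than the $-2\lambda(1-\tfrac{4}{\mu}C_{\mathcal{D}}\lambda)$ of the SVRG case, explaining the denominator appearing in $\alpha$. Collecting terms yields a per-step inequality
\[
\mathbb{E}[\|x_t-x_\star\|_2^2 \mid x_{t-1}, e] \leq \|x_{t-1}-x_\star\|_2^2 - 2\lambda(1-\tfrac{8}{\mu}C_{\mathcal{D}}\lambda)[F(x_{t-1})-F(x_\star)] + (\tfrac{8}{\mu}C_{\mathcal{D}}+8L)\lambda^2[F(\tilde{x})-F(x_\star)] + c\,\|e\|_2^2 ,
\]
where $c$ collects the $4\lambda^2$ from the second moment and the $\tfrac{\lambda}{\beta}$ from Young's inequality.

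I would then sum over $t = 1,\dots,M$, telescope using $\|x_M-x_\star\|_2^2 \geq 0$ and $x_0 = \tilde{x}$, bound $\|\tilde{x}-x_\star\|_2^2 \leq \tfrac{2}{\mu}[F(\tilde{x})-F(x_\star)]$ by strong convexity, and invoke option~II so that $\tfrac{1}{M}\sum_{t=1}^M [F(x_{t-1})-F(x_\star)]$ equals the conditional expectation of $F(\tilde{x}_s)-F(x_\star)$. Dividing by $2\lambda(1-\tfrac{8}{\mu}C_{\mathcal{D}}\lambda)M$ and finally taking expectation over $e$ (so that $\|e\|_2^2$ is replaced by $Var[\tilde{h}]$) produces the one-epoch recursion
\[
\mathbb{E}[F(\tilde{x}_s)-F(x_\star)\mid\mathcal{F}_{s-1}] \leq \alpha\,[F(\tilde{x}_{s-1})-F(x_\star)] + \frac{4(\lambda+\tfrac{1}{2\mu})}{1-\tfrac{8}{\mu}C_{\mathcal{D}}\lambda}\,Var[\tilde{h}] ,
\]
whose second summand is below $\epsilon$ by hypothesis. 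Taking total expectation and unrolling the affine recursion $a_s \leq \alpha a_{s-1} + \epsilon$ over $s$ epochs gives $a_s \leq \alpha^s a_0 + \epsilon\sum_{j=0}^{s-1}\alpha^j \leq \alpha^s a_0 + \tfrac{\epsilon}{1-\alpha}$, which is the claimed bound.

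The main obstacle is the cross term $-2\lambda(x_{t-1}-x_\star)^\intercal e$. Because the whole epoch trajectory $x_0,\dots,x_M$ is driven by a single frozen realization of $\tilde{h}$, the error $e$ and the iterate $x_{t-1}$ are statistically dependent, so one cannot simply discard this term via $\mathbb{E}[e]=0$. The delicate point is to calibrate the Young's-inequality parameter so that the induced $\|x_{t-1}-x_\star\|_2^2$ contribution keeps $\alpha$ strictly below $1$ while the accompanying $\tfrac{\lambda}{\beta}\|e\|_2^2$ term, together with the $4\lambda^2\|e\|_2^2$ from the second moment, assembles into the advertised error coefficient $4(\lambda+\tfrac{1}{2\mu})/(1-\tfrac{8}{\mu}C_{\mathcal{D}}\lambda)$; it is this simultaneous control of contraction and bias that distinguishes the SCSG analysis from the exact-gradient SVRG proof.
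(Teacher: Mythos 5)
Your architecture is the same as the paper's: the identical second-moment bound (the $8L$ and $4\|e\|_2^2$ terms from splitting $\|\tilde{h}\|_2^2$), the identification of the cross term $2\lambda(x_{t-1}-x_\star)^\intercal e$ as the new obstacle, Young's inequality plus strong convexity to handle it, summation with option II, and unrolling of the affine recursion. (The paper applies Young after summing, to $2\lambda M\,\mathbb{E}[(\tilde{x}_s-x_\star)^\intercal(\tilde{h}(\tilde{x})-\nabla F(\tilde{x}))]$, but by option II that is equivalent to your per-step application.) However, your calibration of the Young parameter --- the step you yourself flag as the delicate one --- is wrong. Taking $\beta\propto C_{\mathcal{D}}\lambda$ (concretely $\beta=4C_{\mathcal{D}}\lambda$) does make the coefficient of $[F(x_{t-1})-F(x_\star)]$ equal to $-2\lambda(1-\tfrac{8}{\mu}C_{\mathcal{D}}\lambda)$, as you say, but the residual is then $\tfrac{\lambda}{\beta}\|e\|_2^2=\tfrac{1}{4C_{\mathcal{D}}}\|e\|_2^2$ per step, and after summing and dividing by $2\lambda(1-\tfrac{8}{\mu}C_{\mathcal{D}}\lambda)M$ the error coefficient becomes
\begin{equation*}
\frac{2\lambda+\frac{1}{8C_{\mathcal{D}}\lambda}}{1-\frac{8}{\mu}C_{\mathcal{D}}\lambda},
\qquad\text{not}\qquad
\frac{4(\lambda+\frac{1}{2\mu})}{1-\frac{8}{\mu}C_{\mathcal{D}}\lambda}.
\end{equation*}
The term $\tfrac{1}{8C_{\mathcal{D}}\lambda}$ blows up as $\lambda\to 0$, which is exactly the regime the theorem lives in (already $\alpha<1$ forces $\tfrac{16}{\mu}C_{\mathcal{D}}\lambda<1$, hence $\tfrac{1}{8C_{\mathcal{D}}\lambda}>\tfrac{2}{\mu}$). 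So the hypothesis $\tfrac{4(\lambda+\frac{1}{2\mu})}{1-\frac{8}{\mu}C_{\mathcal{D}}\lambda}Var[\tilde{h}]<\epsilon$ does not bound your error term by $\epsilon$, and your recursion does not deliver the stated conclusion: the additive constant you obtain can exceed $\tfrac{\epsilon}{1-\alpha}$ by an arbitrarily large factor.

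The correct choice --- the paper's --- is $\beta=\mu/2$, independent of $C_{\mathcal{D}}$ and $\lambda$. Strong convexity converts $\tfrac{\mu\lambda}{2}\|x_{t-1}-x_\star\|_2^2$ into $\lambda[F(x_{t-1})-F(x_\star)]$, and absorbing this does not ``double the $C_{\mathcal{D}}$-correction'' at fixed prefactor; it halves the prefactor: $-2\lambda(1-\tfrac{4}{\mu}C_{\mathcal{D}}\lambda)+\lambda=-\lambda(1-\tfrac{8}{\mu}C_{\mathcal{D}}\lambda)$. This is the true origin of both the $8$ in the denominator of $\alpha$ and the factor $2$ in its first term $\tfrac{2}{\mu(1-\frac{8}{\mu}C_{\mathcal{D}}\lambda)\lambda M}$ (your calibration would yield numerator $1$ there). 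The residual is then $\tfrac{2\lambda}{\mu}\|e\|_2^2$ per step, which together with the $4\lambda^2\|e\|_2^2$ from the second moment gives $4\lambda(\lambda+\tfrac{1}{2\mu})\|e\|_2^2$; dividing by $\lambda(1-\tfrac{8}{\mu}C_{\mathcal{D}}\lambda)M$ and taking expectation over $\tilde{h}$ produces exactly the advertised coefficient. With this single change of $\beta$, your argument becomes the paper's proof.
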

\begin{proof}
	Conditioning on $x_{t-1}$, we can take expectation with respect to $v_t \in \Omega_v$ to obtain 
	\begin{align}\label{vt}
	\mathbb{E}[\|\nu_t\|_2^2 \ | \  x_{t-1}] \leq & 2\mathbb{E}[\|W(x_{t-1},v_t)-W(\tilde{x},v_t)\|_2^2\ | \  x_{t-1}] + 4\| \nabla F(\tilde{x}) \|_2^2 +4 \|\tilde{h}(\tilde{x})-\nabla F(\tilde{x})\|_2^2 \nonumber \\
	\leq & 2C_{\mathcal{D}}\|x_{t-1}-\tilde{x}\|_2^2+ 8L[F(\tilde{x})-F(x_{\star})]+4 \|\tilde{h}(\tilde{x})-\nabla F(\tilde{x})\|_2^2  \nonumber \\
	\leq & 4C_{\mathcal{D}}(\|x_{t-1}-{x_{\star}}\|_2^2 +\|\tilde{x}-x_{\star}\|_2^2) + 8L[F(\tilde{x})-F(x_{\star})]+4 \|\tilde{h}(\tilde{x})-\nabla F(\tilde{x})\|_2^2  \nonumber \\
	\leq & \frac{8}{\mu}C_{\mathcal{D}} \cdot [F(x_{t-1})-F(x_{\star})] + (\frac{8}{\mu}C_{\mathcal{D}} + 8L)\cdot[F(\tilde{x})-F(x_{\star})]+4 \|\tilde{h}(\tilde{x})-\nabla F(\tilde{x})\|_2^2.
	\end{align}
	where the second inequality follows from Theorem \ref{th3} and equation \eqref{svrg8}. The last inequality follows from the strong convexity of $F(\cdot)$. Now following \eqref{vt}, we can write
	\begin{align}
	&\mathbb{E}[\|x_t-x_{\star}\|_2^2 \ | \ x_{t-1}] \nonumber \\
	=&\|x_{t-1}-x_{\star}\|_2^2-2\lambda(x_{t-1}-x_{\star})^{\intercal}\mathbb{E}[\nu_t \ | x_{t-1}]+\lambda^2\mathbb{E}[\|\nu_t\|_2^2 \ | x_{t-1}] \nonumber \\
	\leq & \|x_{t-1}-x_{\star}\|_2^2-2\lambda(x_{t-1}-x_{\star})^{\intercal}(\nabla F(x_{t-1})-\nabla F(\tilde{x})+\tilde{h}(\tilde{x}))\nonumber \\
	&+ \frac{8}{\mu}C_{\mathcal{D}} \lambda^2\cdot [F(x_{t-1})-F(x_{\star})] + (\frac{8}{\mu}C_{\mathcal{D}} + 8L)\lambda^2\cdot[F(\tilde{x})-F(x_{\star})]+4\lambda^2 \|\tilde{h}(\tilde{x})-\nabla F(\tilde{x})\|_2^2 \nonumber \\
	\leq & \|x_{t-1}-x_{\star}\|_2^2-2\lambda [F(x_{t-1})-F(x_{\star})]+ 2\lambda (x_{t-1}-x_{\star})^{\intercal}(\tilde{h}(\tilde{x})-\nabla F(\tilde{x}))\nonumber\\ &+\frac{8}{\mu}C_{\mathcal{D}} \lambda^2\cdot [F(x_{t-1})-F(x_{\star})] + (\frac{8}{\mu}C_{\mathcal{D}} + 8L)\lambda^2\cdot[F(\tilde{x})-F(x_{\star})]+4\lambda^2 \|\tilde{h}(\tilde{x})-\nabla F(\tilde{x})\|_2^2  \nonumber \\
	=& \|x_{t-1}-x_{\star}\|_2^2-2\lambda(1-\frac{4}{\mu}C_{\mathcal{D}}\lambda) [F(x_{t-1})-F(x_{\star})]+(\frac{8}{\mu}C_{\mathcal{D}} + 8L)\lambda^2[F(\tilde{x})-F(x_{\star})] \nonumber \\
	&+4\lambda^2 \|\tilde{h}(\tilde{x})-\nabla F(\tilde{x})\|_2^2+2\lambda (x_{t-1}-x_{\star})^{\intercal}(\tilde{h}(\tilde{x})-\nabla F(\tilde{x})),
	\end{align}
	where the third line follows from the convexity of $F(\cdot)$. Now we consider a fixed stage $s$, so that $\tilde{x}=\tilde{x}_{s-1}$ and $\tilde{x}_s$ is selected uniformly after all $M$ updates are completed. Summing over the previous inequality over $t=1,...,M$, taking expectation and use options II at stage $s$, we obtain
	\begin{align}
	&\mathbb{E}[\|x_{M}-x_{\star}\|_2^2]+2\lambda(1-\frac{4}{\mu}C_{\mathcal{D}}\lambda)M\mathbb{E}[F(\tilde{x}_s)-F(x_{\star})] \nonumber \\
	\leq & \mathbb{E}[\|x_{0}-x_{\star}\|_2^2] +(\frac{8}{\mu}C_{\mathcal{D}} + 8L)\lambda^2M\mathbb{E}[F(\tilde{x})-F(x_{\star})]+4\lambda^2M \|\tilde{h}(\tilde{x})-\nabla F(\tilde{x})\|_2^2+2\lambda M \mathbb{E}[(\tilde{x}_s-x_{\star})^{\intercal}(\tilde{h}(\tilde{x})-\nabla F(\tilde{x}))]\nonumber \\
	\leq & \mathbb{E}[\|x_{0}-x_{\star}\|_2^2] +(\frac{8}{\mu}C_{\mathcal{D}} + 8L)\lambda^2M\mathbb{E}[F(\tilde{x})-F(x_{\star})]+4\lambda M(\lambda+\frac{1}{2\mu}) \|\tilde{h}(\tilde{x})-\nabla F(\tilde{x})\|_2^2+\frac{\mu}{2}\lambda M  \mathbb{E}[\|\tilde{x}_s-x_{\star}\|_2^2] \nonumber\\
	\leq & \mathbb{E}[\|x_{0}-x_{\star}\|_2^2] +(\frac{8}{\mu}C_{\mathcal{D}} + 8L)\lambda^2M\mathbb{E}[F(\tilde{x})-F(x_{\star})]+4\lambda M(\lambda+\frac{1}{2\mu}) \|\tilde{h}(\tilde{x})-\nabla F(\tilde{x})\|_2^2+\lambda M  \mathbb{E}[F(\tilde{x}_s)-F(x_{\star})],
	\end{align}
	where the second inequality follows from $2a^\intercal b\leq {\beta}\|a\|_2^2+\frac{1}{\beta} \|b\|_2^2$ while $\beta=\frac{\mu}{2}$. The last inequality follows from strong convexity of $F(\cdot)$. Finally, taking expectation over the randomness of $\tilde{h}(\tilde{x})$, we have
	\begin{align}
	&\lambda(1-\frac{8}{\mu}C_{\mathcal{D}}\lambda)M\mathbb{E}[F(\tilde{x}_s)-F(x_{\star})]\nonumber \\
	\leq& \mathbb{E}[\|\tilde{x}-x_{\star}\|_2^2] + (\frac{8}{\mu}C_{\mathcal{D}} + 8L)\lambda^2M\mathbb{E}[F(\tilde{x})-F(x_{\star})]+4\lambda M(\lambda+\frac{1}{2\mu}) Var[\tilde{h}(\tilde{x})] \nonumber \\
	\leq & \frac{2}{\mu} \mathbb{E}[F(\tilde{x})-F(x_{\star})]+(\frac{8}{\mu}C_{\mathcal{D}} + 8L)\lambda^2M\mathbb{E}[F(\tilde{x})-F(x_{\star})] +4\lambda M(\lambda+\frac{1}{2\mu}) Var[\tilde{h}(\tilde{x})] \nonumber \\
	= & (\frac{2}{\mu}+(\frac{8}{\mu}C_{\mathcal{D}} + 8L)\lambda^2 M)\mathbb{E}[F(\tilde{x})-F(x_{\star})]+4\lambda M(\lambda+\frac{1}{2\mu}) Var[\tilde{h}(\tilde{x})]
	\end{align}
	Thus we obtain
	\begin{align}
	\mathbb{E}[F(\tilde{x}_s)-F(x_{\star})] &\leq \bigg[\frac{2}{\mu(1-\frac{8}{\mu}C_{\mathcal{D}}\lambda)\lambda M}+\frac{(\frac{8}{\mu}C_{\mathcal{D}}+8L)\lambda}{1-\frac{8}{\mu}C_{\mathcal{D}}\lambda}\bigg]\mathbb{E}[F(\tilde{x}_{s-1})-F(x_{\star})]+ \frac{4(\lambda+\frac{1}{2\mu})}{1-\frac{8}{\mu}C_{\mathcal{D}}\lambda} Var[\tilde{h}(\tilde{x})] \nonumber \\
	&\leq \alpha \cdot \mathbb{E}[F(\tilde{x}_{s-1})-F(x_{\star})] + \epsilon
	\end{align}
	This implies that $\mathbb{E}[F(\tilde{x}_{s})-F(x_{\star})] \leq \alpha^s \cdot \mathbb{E}[F(\tilde{x}_{0})-F(x_{\star})]+\frac{\epsilon}{1-\alpha} $. The conclusion follows.
\end{proof}
\begin{Corollary}
	Let $\{\tilde{x}_s\}_{s\geq 0}$ be the sequence of output from each epoch of the Simulated SCSG algorithm and define $\tilde{y}_s=\min\limits_{t\leq s}\{F(\tilde{x}_t)-F(x_{\star})\}$ for $s \geq 0$ to be the lowest objective value after epoch s. Then, with probability 1, we have $\inf\limits_{s\geq 0}\tilde{y}_s \leq \frac{\epsilon}{1-\alpha}$.
\end{Corollary}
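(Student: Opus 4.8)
The plan is to reduce the almost-sure statement to an elementary drift/stopping-time estimate. Write $D_s := F(\tilde x_s) - F(x_\star) \ge 0$. Since $\tilde y_s = \min_{t\le s} D_t$ is nonincreasing in $s$, we have $\inf_{s\ge 0}\tilde y_s = \inf_{t\ge 0} D_t$, so it suffices to show that $\inf_{t\ge 0} D_t \le \tfrac{\epsilon}{1-\alpha}$ almost surely. The crucial input is \emph{not} the unconditional recursion displayed at the end of Theorem \ref{Th3}, but its conditional form. Letting $\mathcal{F}_{s-1}$ denote the $\sigma$-algebra generated by all randomness up to and including the output $\tilde x_{s-1}$, I would rerun the final lines of the proof of Theorem \ref{Th3} conditionally on $\mathcal{F}_{s-1}$: all epoch-$s$ randomness (the batch $\mathcal{I}_s$, the $K$ inner samples defining $\tilde h$, the inner-loop simulated gradients, and the uniform choice in option II) is averaged out, and the \emph{deterministic} bound on $\mathrm{Var}[\tilde h]$ supplies the additive $\epsilon$, yielding $\mathbb{E}[D_s \mid \mathcal{F}_{s-1}] \le \alpha D_{s-1} + \epsilon$. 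I would emphasize that the conditional version is essential here: the unconditional bound only controls means and is compatible with $D_s$ being frozen above $\tfrac{\epsilon}{1-\alpha}$ on an event of positive probability, so it alone cannot force an almost-sure conclusion.

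Given the conditional recursion, fix $\eta > 0$ and set $\tau := \inf\{s\ge 0 : D_s < \tfrac{\epsilon}{1-\alpha}+\eta\}$; the goal is to prove $\mathbb{P}(\tau=\infty)=0$. On the $\mathcal{F}_{s-1}$-measurable event $\{\tau > s-1\}$ we have $D_{s-1}\ge \tfrac{\epsilon}{1-\alpha}+\eta$, so
\[
\mathbb{E}[D_s - D_{s-1}\mid \mathcal{F}_{s-1}] \le (\alpha-1)D_{s-1}+\epsilon \le -(1-\alpha)\eta ,
\]
i.e. the stopped process $D_{s\wedge\tau}$ is a nonnegative supermartingale with a uniform negative drift before $\tau$. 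Telescoping the increments and taking expectations gives, for every $S$,
\[
0 \le \mathbb{E}[D_{S\wedge\tau}] \le D_0 - (1-\alpha)\eta\sum_{s=0}^{S-1}\mathbb{P}(\tau > s).
\]
Because $D_0 = F(\tilde x_0)-F(x_\star)$ is a finite constant, letting $S\to\infty$ forces $\sum_{s\ge 0}\mathbb{P}(\tau>s)<\infty$, hence $\mathbb{P}(\tau>s)\to 0$ and $\mathbb{P}(\tau=\infty)=0$.

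Finally, writing $\tau_{1/k}$ for the stopping time associated with $\eta = 1/k$, we have $\{\inf_t D_t > \tfrac{\epsilon}{1-\alpha}\} = \bigcup_{k\ge 1}\{\tau_{1/k}=\infty\}$, which has probability zero by countable subadditivity. Therefore $\inf_{s\ge 0}\tilde y_s = \inf_{t\ge 0} D_t \le \tfrac{\epsilon}{1-\alpha}$ almost surely, as claimed. The main obstacle is precisely the first step: I must check that the epoch-to-epoch contraction in Theorem \ref{Th3} really holds conditionally on $\mathcal{F}_{s-1}$ with the same constants, and in particular that the $\mathrm{Var}[\tilde h]$ term—being a uniform deterministic bound independent of $\tilde x_{s-1}$—can be absorbed into the additive $\epsilon$ \emph{pointwise} rather than only in mean. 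Once that conditional recursion is in hand, the drift, telescoping, and union-over-$\eta$ steps are routine.
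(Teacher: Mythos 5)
Your proof is correct, but it takes a genuinely different route from the paper's. The paper works in blocks: it applies the bound of Theorem \ref{Th3} restarted from $\tilde{x}_N$ (this is where the Markov property is invoked), uses the compactness bound $\sup_{x\in\mathcal{D}}\{F(x)-F(x_{\star})\}\le 2l_{\mathcal{D}}$ to make the restarted bound uniform in the starting point, chooses $N$ so large that Markov's inequality gives $\mathbb{P}(\tilde y_{N}\ge \frac{\epsilon}{1-\alpha}+\rho)\le\delta<1$, iterates over blocks to get $\mathbb{P}(\tilde y_{kN}\ge\frac{\epsilon}{1-\alpha}+\rho)\le\delta^{k}$, and finishes with a union over $\rho=1/n$. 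You instead upgrade the epoch recursion to its conditional form $\mathbb{E}[D_s\mid\mathcal{F}_{s-1}]\le\alpha D_{s-1}+\epsilon$ and run a drift/optional-stopping argument on the stopped process, obtaining $\sum_{s\ge 0}\mathbb{P}(\tau>s)\le D_0/((1-\alpha)\eta)<\infty$, hence $\mathbb{P}(\tau=\infty)=0$, and then take the same union over $\eta=1/k$. Your concern about whether the conditional recursion is legitimate is well placed, but it resolves exactly as you suggest: the proof of Theorem \ref{Th3} is carried out for a fixed stage $s$ conditionally on $\tilde{x}=\tilde{x}_{s-1}$, and the variance bound on $\tilde h$ from the preceding Lemma is deterministic and uniform over $\mathcal{D}$ (given Assumption 1), so the additive $\epsilon$ holds pointwise; note that the paper needs precisely the same conditional statement when it invokes the ``Markov Property'' to restart the chain from $\tilde{x}_N$, so neither argument gets this step for free. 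As for what each approach buys: the paper's block argument yields a geometric tail $\delta^{k}$ for the hitting probability, i.e.\ a rate, at the cost of invoking the uniform bound $2l_{\mathcal{D}}$ and choosing a block length $N$; your argument dispenses with the $2l_{\mathcal{D}}$ bound entirely (needing only $D_0<\infty$ and nonnegativity of $D_s$), is shorter, and additionally bounds the expected hitting time $\mathbb{E}[\tau]$, though it delivers only summability of the tail probabilities $\mathbb{P}(\tau>s)$ rather than their geometric decay.
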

\begin{proof}
It follows from Theorem \ref{Th3} that we can find $0<\alpha<1$ where $\mathbb{E}[F(\tilde{x}_{s})-F(x_{\star})] \leq \alpha^s \mathbb{E}[F(\tilde{x}_{0})-F(x_{\star})]+\frac{\epsilon}{1-\alpha}$. We also have $\sup\limits_{x \in \mathcal{D}}\{F(x)-F(x_{\star})\} \leq 2l_{\mathcal{D}}$ from the definition of $l_{\mathcal{D}}$. It follows that for any $\tilde{x}_0\in \mathcal{D}$, we have that $\mathbb{E}[F(\tilde{x}_{s})-F(x_{\star})|\tilde{x}_0] \leq \alpha^s\cdot 2l_{\mathcal{D}}+\frac{\epsilon}{1-\alpha}$. For any $\rho>0$, pick $N$ large enough  that $\delta=(\alpha^N\cdot 2l_{\mathcal{D}}+\frac{\epsilon}{1-\alpha})(\frac{\epsilon}{1-\alpha}+\rho)^{-1}<1$, we have 
\begin{equation*}
\mathbb{P}(\tilde{y}_N \geq \frac{\epsilon}{1-\alpha}+\rho) \leq \mathbb{P}(F(\tilde{x}_N)-F(x)\geq \frac{\epsilon}{1-\alpha}+\rho)\leq \mathbb{E}[F(\tilde{x}_0)-F(x)](\frac{\epsilon}{1-\alpha}+\rho)^{-1} \leq \delta.
\end{equation*}
However, if we denote $\mathcal{X}_N$ to be the distribution of $\tilde{x}_N$ conditioning on $\tilde{y}_N\geq \frac{\epsilon}{1-\alpha}+\rho$, then it follows from the Markov Property that 
\begin{align*}
\mathbb{P}(\tilde{y}_{2N} \geq \frac{\epsilon}{1-\alpha}+\rho)=&\mathbb{P}(\tilde{y}_{2N} \geq \frac{\epsilon}{1-\alpha}+\rho|\tilde{y}_N \geq \frac{\epsilon}{1-\alpha}+\rho)\mathbb{P}(\tilde{y}_N \geq \frac{\epsilon}{1-\alpha}+\rho)\nonumber\\
=&\mathbb{P}(\min\limits_{N+1 \leq s \leq 2N}\{F(\tilde{x}_s)-F(x_{\star})\} \geq \frac{\epsilon}{1-\alpha}+\rho|\tilde{y}_N \geq \frac{\epsilon}{1-\alpha}+\rho)\mathbb{P}(\tilde{y}_N \geq \frac{\epsilon}{1-\alpha}+\rho) \nonumber\\
=& (\mathbb{P}_{\tilde{x}_N \sim \mathcal{X}_N}\mathbb{P}(\min\limits_{N+1 \leq s \leq 2N}\{F(\tilde{x}_s)-F(x_{\star})\} \geq \frac{\epsilon}{1-\alpha}+\rho|\tilde{x}_N))\cdot\mathbb{P}(\tilde{y}_N \geq \frac{\epsilon}{1-\alpha}+\rho) \nonumber\\
\leq&(\mathbb{P}_{\tilde{x}_N \sim \mathcal{X}_N}\mathbb{P}(F(\tilde{x}_{2N})-F(x_{\star}) \geq \frac{\epsilon}{1-\alpha}+\rho|\tilde{x}_N))\cdot \delta \nonumber\\
\leq & (\mathbb{P}_{\tilde{x}_N \sim \mathcal{X}_N}\mathbb{E}[F(\tilde{x}_{2N})-F(x_{\star})|\tilde{x}_N])\cdot( \frac{\epsilon}{1-\alpha}+\rho)^{-1}\cdot \delta \nonumber\\
=&(\mathbb{P}_{\tilde{x}_0 \sim \mathcal{X}_N}\mathbb{E}[F(\tilde{x}_{N})-F(x_{\star})|\tilde{x}_0])\cdot( \frac{\epsilon}{1-\alpha}+\rho)^{-1}\cdot \delta \nonumber\\
\leq&\mathbb{P}_{\tilde{x}_0 \sim \mathcal{X}_N}(\alpha^N\cdot 2l_{\mathcal{D}}+\frac{\epsilon}{1-\alpha})\cdot( \frac{\epsilon}{1-\alpha}+\rho)^{-1}\cdot \delta \leq \delta^2
\end{align*}
Continue on, we can prove that $\mathbb{P}(\tilde{y}_{kN} \geq \frac{\epsilon}{1-\alpha}+\rho) \leq \delta^k$. Thus if we define the set $\mathcal{A}_{\rho}=\{\inf\limits_{s\geq 0}\tilde{y}_s \geq \frac{\epsilon}{1-\alpha}+\rho\}$ and $\mathcal{A}=\{\inf\limits_{s\geq 0}\tilde{y}_s > \frac{\epsilon}{1-\alpha}\}$ in probability space, we have
\begin{equation}
\mathbb{P}(\mathcal{A}_{\rho})=\mathbb{P}(\inf_{s\geq 0}\tilde{y}_{s} \geq \frac{\epsilon}{1-\alpha}+\rho) \leq \mathbb{P}(\tilde{y}_{kN} \geq \frac{\epsilon}{1-\alpha}+\rho) \leq \delta^k,
\end{equation}
for any $k \geq 1$. Since $\delta<1$, we have $\mathbb{P}(\mathcal{A}_{\rho})=0$ for any $\rho>0$ which implies $\mathbb{P}(\mathcal{A}) =\mathbb{P}( \bigcup\limits_{n \geq 1}\mathcal{A}_{\frac{1}{n}}) \leq \sum\limits_{n=1}^{\infty}\mathbb{P}(\mathcal{A}_{\frac{1}{n}})=0.$
So, with probability 1, $\inf\limits_{s\geq 0}\tilde{y}_s \leq \frac{\epsilon}{1-\alpha}$.
\end{proof}
\section{Numerical Experiments}
\subsection{Cox's partial likelihood}
We implemented the two algorithms on minimizing a regularized Cox's negative partial log- likelihood and compared the performance with the  Compositional-SVRG-1 algorithm in \cite{pmlr-v54-lian17a} and  gradient decent algorithm:
\begin{align}
\min_{\beta \in \mathbb{R}^p} \quad \frac{1}{n} \sum_{i=1}^n \Delta_i[-X_i^\top \beta + \log\{\sum_{j = 1}^n \mathbb{I}(Y_j \geq Y_i) \exp(X_j^\top \beta)\}] + \frac{1}{2}\|\beta\|_2^2, \label{log-pl}.
\end{align}
As in the setting of Examples, $(X_i, Y_i, \Delta_i)$, $i = 1,\ldots, n$ are I.I.D. observations, $X_i \in \mathbb{R}^p$ is the feature vector, $Y_i = \min(T_i, C_i)$ and $\Delta_i = \mathbb{I}\{Y_i = T_i\}$, $T_i$ is the true life time and $C_i$ is the censoring time which is independent of $T_i$. It is easy to see that each component function is strongly convex and has Lipschitz continuous gradients. Our numerical results are based on simulated data and here is our settings. We set $n = 10^4$, $p = 10^3$ and let every entry of $X$ follow I.I.D. standard normal distribution. $T$ is generated by standard exponential base line hazard function and $C$ is independent of $T$ with censoring rate around 30\%.

In the Simulated SVRG algorithm, we set the step size to be $\lambda=0.01$, the number of iterations in the inner loop to be $M=100$ and the base level to be $n_0=0$ whereas in the Simulated SCSG algorithm, we set he step size to be $\lambda=0.0005$, the number of iterations in the inner loop to be $M=100$, the batch size to be $B=100$, number of simulations to be $K=50$ and the base levels to be $n_0=2$.Accordingly, in the compositional-SVRG-1 algorithm, we set the step size to be $\lambda=0.001$, the number of iterations in the inner loop to be $M=100$ and the batch size to be $B=500$ whereas in the gradient descent algorithm, we set the step size to be $\lambda=0.01$. 

The numerical result presented below is a plot of the logarithm of the difference between function value and the optimal value against number of iterations. From this plot, we can see that the proposed algorithms converges linearly to the optimal solution. Algorithm 4, the Simulated SVRG has the best performance amongst the group. This result is expected since the convergence rate of Simulated SVRG does not depend on $m$. However, this is not the case for the compositional-SVRG-1 algorithms. Also, as expected, we can see that Algorithm 5, the Simulated SCGS does not perform as well as Algorithm 4 or compositional-SVRG-1 since it does not involve full gradient evaluations. An interesting finding is that gradient descent algorithm has the worst performance in terms of iteration complexity. 
\begin{figure}[htp]
	\begin{center}
		\includegraphics[scale = 0.5]{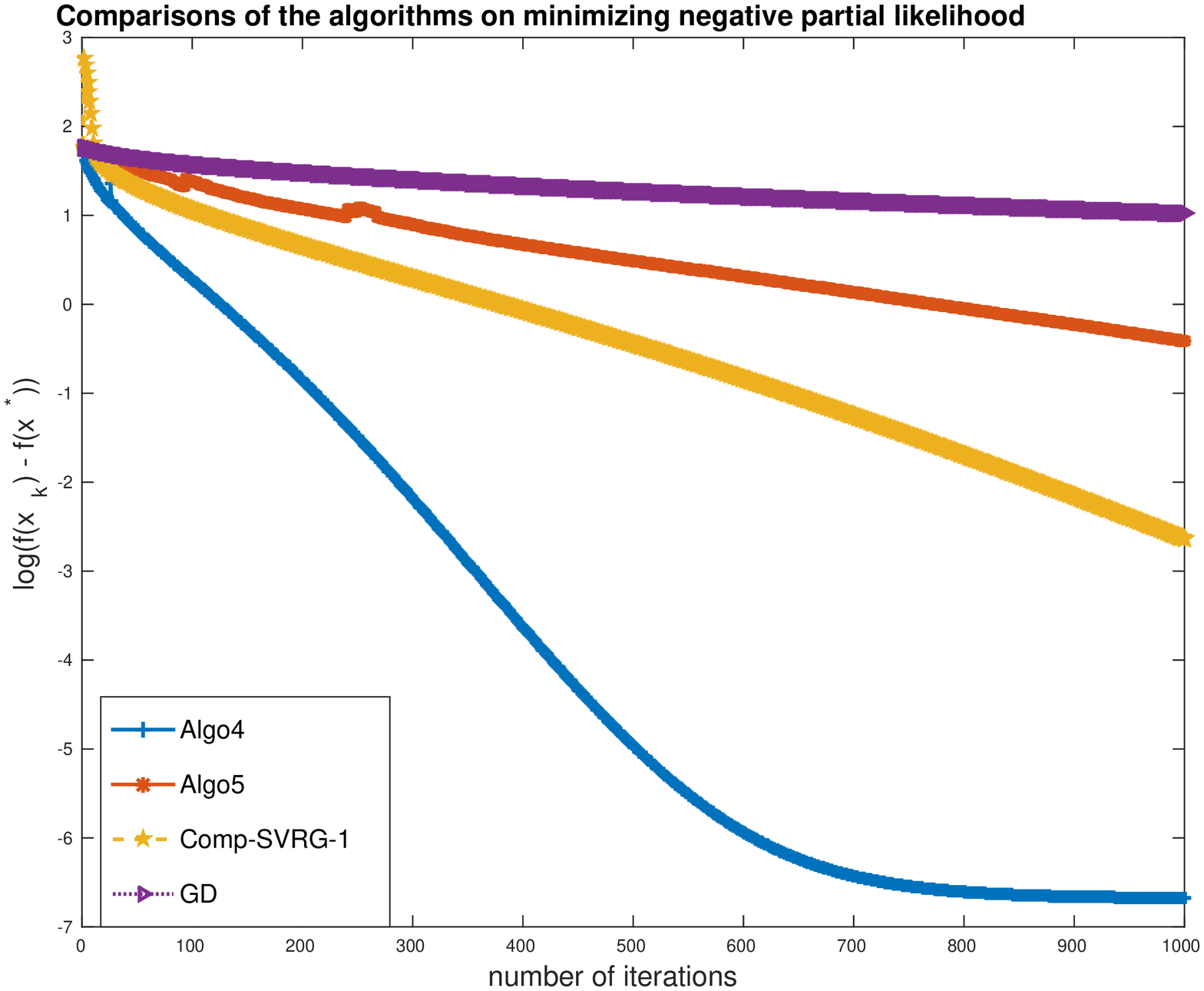}
	\end{center}
	
\end{figure}

\subsection{Conditional Random Fields}

We also implemented the proposed algorithms on the optical character recognition (OCR) dataset to train conditional random field in \cite{taskar2004max}. In contrast to the Cox's partial likelihood problem, the full gradient of CRF can be efficiently computed by dynamic programming method (the Viterbi algorithm\cite{1450960}) as mentioned in Examples. We compare the proposed algorithms with gradient descent. 

In the Simulated SVRG algorithm, we set the step size to be $\lambda=0.001$, the number of iterations in the inner loop to be $M=200$ and the base level to be $n_0=0$.
In the Simulated SCGS descent algorithm, we set the step size to be $\lambda=0.0001$, the number of iterations in the inner loop to be $M=200$, the batch size to be $B=100$, number of simulations to be $K=10$ and the base levels to be $n_0=2$. Finally, in the gradient descent algorithm, we set the step size to be $\lambda=0.01$. 

The plot below is the logarithm of the difference between function value and the optimal value against number of iterations. Similarly as before, Algorithm 4, the Simulated SVRG has the best performance amongst the group. However, in this example, gradient descent algorithm actually outperforms Algorithm 5, Simulated SVRG in terms iteration complexity. As mentioned before, this is due to the lack of full gradient evaluation in Algorithm 5 which, as we can see, has a more negative effect when the dataset is large. This is expected because the larger the dataset, the more costly it would be to obtain accurate gradient estimation. Another interesting observation is that sometimes we can see function value actually increase after iteration. This is due to the variance of the gradient estimate.

\begin{figure}[tbp]
	\begin{center}
		\includegraphics[scale = 0.5]{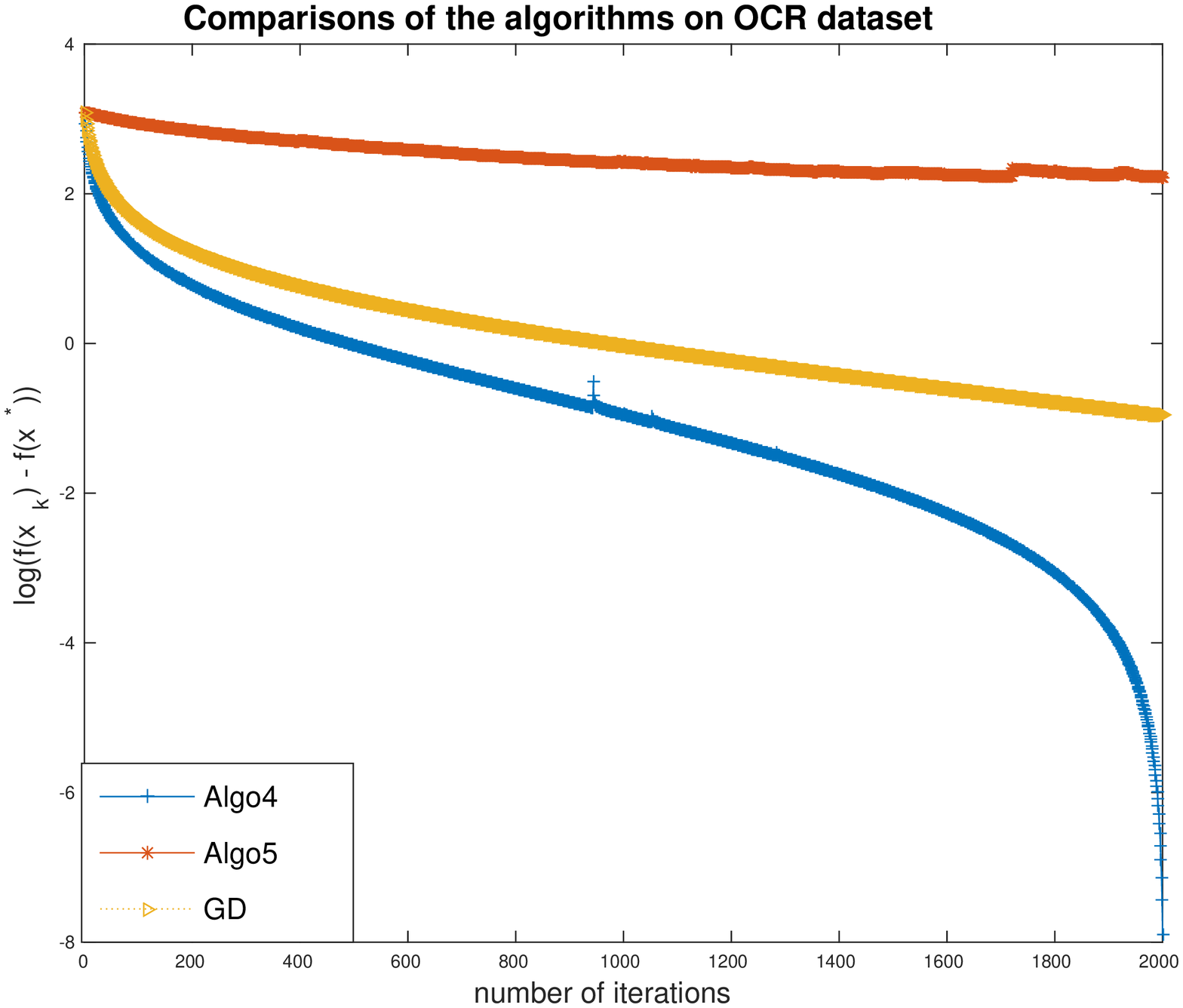}
	\end{center}
	
\end{figure}

\bibliographystyle{unsrt}
\bibliography{hw1}
\end{document}